\theoremstyle{plain}% default, italic fonts for text of the following items
\newtheorem{theorem}{Theorem}[section]
\newtheorem{lemma}[theorem]{Lemma}
\newtheorem{proposition}[theorem]{Proposition}
\newtheorem{corollary}[theorem]{Corollary}
\theoremstyle{definition}% roman font for text of the following items
\newtheorem{definition}[theorem]{Definition}
\newtheorem{example}[theorem]{Example}
\newtheorem{notation}[theorem]{Notation}
\newtheorem{remark}[theorem]{Remark}
\newcommand{\clA}{\mathcal{A}}
\newcommand{\clC}{\mathcal{C}}
\newcommand{\clD}{\mathcal{D}}
\newcommand{\clE}{\mathcal{E}}
\newcommand{\clI}{\mathcal{I}}
\newcommand{\clU}{\mathcal{U}}
\newcommand{\zg}{\gamma}
\newcommand{\zD}{\Delta}
\newcommand{\zve}{\varepsilon}
\newcommand{\zs}{\sigma}
\newcommand{\pt}{\partial}
\newcommand{\Ho}{\operatorname{Ho}}
\newcommand{\Dec}{\operatorname{Dec}}
\newcommand{\Id}{\operatorname{Id}}
\newcommand{\pr}{\operatorname{pr}}
\newcommand{\bsim}{/\!\!\sim}
\newcommand{\nid}{\noindent}
\newcommand{\ds}{\displaystyle}
\newcommand{\bk}{\bigskip}
\newcommand{\mk}{\medskip}
\newcommand{\ovl}[1]{\overline{#1}}
\newcommand{\ovll}[1]{\overset{=}{#1}}
\newcommand{\tms}[2]{\overset{#1}{\times}_{#2}}
\newcommand{\up}[1]{^{(#1)}}
\newcommand{\lo}[1]{_{(#1)}}
\newcommand{\rw}{\rightarrow}
\newcommand{\Rw}{\Rightarrow}
\newcommand{\lw}{\leftarrow}
\newcommand{\xrw}{\xrightarrow} % use as follows: \xrw{} brackets possibly empty
\newcommand{\hxrw}[1]{\xymatrix{\ \ar@{^{(}->}^{#1}[r] & \ }}% Extensible right hookarrow
\newcommand{\tiund}[1]{{\times}_{#1}\:}
\newcommand{\pro}[3]{#1\tiund{#2}\overset{#3}{\cdots}\tiund{#2}#1}
\newcommand{\tens}[2]{#1\,\tiund{#2}\,#1}
\newcommand{\uset}[2]{\underset{#1}{#2}}
\newcommand{\oset}[2]{\overset{#1}{#2}}
\newcommand{\mi}{\text{-}}
\newcommand{\nm}{(n-1)}
\newcommand{\bl}{\bullet}
\newcommand{\seqc}[3]{{#1}_{#2},...,{#1}_{#3}}
\newcommand{\dop}[1]{\Delta^{{#1}^{op}}}
\newcommand{\Dop}{\Delta^{op}}
\newcommand{\Dnop}{\Delta^{{n}^{op}}}
\newcommand{\Dmenop}{\Delta^{{n-1}^{op}}}
\newcommand{\cat}[1]{\mbox{$\mathsf{Cat^{#1}}$}}
\newcommand{\Cat}{\mbox{$\mathsf{Cat}\,$}}
\newcommand{\Gpd}{\mbox{$\mathsf{Gpd}$}}
\newcommand{\cathd}[1]{\mbox{$\mathsf{Cat_{hd}^{#1}}$}}
\newcommand{\catwg}[1]{\mbox{$\mathsf{Cat_{wg}^{#1}}$}}
\newcommand{\gcatwg}[1]{\mbox{$\mathsf{GCat_{wg}^{#1}}$}}
\newcommand{\tawg}[1]{\mbox{$\mathsf{Ta_{wg}^{#1}}$}}
\newcommand{\lta}[1]{\mbox{$\mathsf{LTa_{wg}^{#1}}$}}
\newcommand{\ftawg}[1]{\mbox{$\mathsf{FTa_{wg}^{#1}}$}}
\newcommand{\gtawg}[1]{\mbox{$\mathsf{GTa_{wg}^{#1}}$}}
\newcommand{\seg}[1]{\mbox{$\mathsf{Seg_{#1}}$}}
\newcommand{\segpsc}[2]{\mbox{$\mathsf{SegPs}$}\funcat{#1}{#2}}
\newcommand{\Ps}{\mbox{$\mathsf{Ps}$}}
\newcommand{\psc}[2]{\mbox{$\mathsf{Ps}$}\funcat{#1}{#2}}
\newcommand{\PsTalg}{\mbox{\sf Ps-}T\mbox{\sf -alg}}
\newcommand{\Top}{\mbox{$\mathsf{Top}$}}
\newcommand{\muk}{\mu_k}
\newcommand{\hmu}[1]{\hat\mu_{#1}}
\newcommand{\hmuk}{\hat{\mu}_k}
\newcommand{\Nn}{N_{(n)}}
\newcommand{\N}[1]{N_{(#1)}}
\newcommand{\Nb}[1]{N_{(#1)}}
\newcommand{\Nu}[1]{N^{(#1)}}
\newcommand{\di}[1]{d^{(#1)}}
\newcommand{\tld}{\tilde{d}}
\newcommand{\Dn}{D_{n}}
\newcommand{\Dnm}{D_{n-1}}
\newcommand{\D}[1]{D_{#1}}
\newcommand{\p}[1]{p^{(#1)}}
\newcommand{\op}[1]{\bar{p}^{(#1)}}
\newcommand{\q}[1]{q^{(#1)}}
\newcommand{\rz}{R_0}
\newcommand{\zgu}[1]{\zg^{(#1)}}
\newcommand{\Tan}{\mbox{$\mathsf{Ta^{n}}$}}
\newcommand{\ta}[1]{\mbox{$\mathsf{Ta^{#1}}$}}
\newcommand{\gta}[1]{\mbox{$\mathsf{GTa^{#1}}$}}
\newcommand{\Set}{\mbox{$\mathsf{Set}$}}
\newcommand{\St}{St\,}
\newcommand{\uk}{\underline{k}}
\newcommand{\ur}{\underline{r}}
\newcommand{\us}{\underline{s}}
\newcommand{\Qn}{Q_{n}}
\newcommand{\Qnm}{Q_{n-1}}
\newcommand{\Discn}{Disc_{n}}
\newcommand{\funcat}[2]{[\Delta^{{#1}^{op}},#2]}
\newcommand{\ps}{\sf{ps}}
\newcommand{\nfol}{$n$-fold }
\begin{document}

\title [\tiny{Weakly globular $n$-fold $n$-categories as...}]{Weakly globular $\bm{N}$-fold categories as a model of weak $\bm{N}$-categories}

\author[\tiny{Simona Paoli}]{Simona Paoli}
 \address{Department of Mathematics, University of Leicester,
LE17RH, UK}
 \email{sp424@le.ac.uk}

\date{10 September 2016}

\keywords{$n$-fold category, pseudo-functors, weak $n$-category, multi-simplicial sets}

\subjclass[2010]{18Dxx}

%%%%%%%%%%%%%%%%%%%%%%%%%%%%%%%%%%%%%%%%%%%%%%%%%%%%%%%%%%%%%%%%%%%%%%%%%%%%%

\begin{abstract}
We study a new type of higher categorical structure, called weakly globular $n$-fold category, previously introduced by the author. We show that this structure is a model of weak $n$-categories by proving that it is  suitably equivalent to the Tamsamani-Simpson model. We also introduce groupoidal weakly globular $n$-fold categories and show that they are algebraic models of $n$-types.
\end{abstract}

\maketitle
%\thispagestyle{fancy}

%\tableofcontents

%%%%%%%%%%%%%%%%%%%%%%%%%%%%%%%%%%%%%%%%%%%%%%%%%%%%%%%%%%%%%%%%%%%%%%%%%%%%%%%
\section{Introduction and Summary}\label{sec-intro}

Higher category theory is motivated and find applications to diverse areas, such as homotopy theory \cite{Be2} \cite{L2}, algebraic geometry \cite{Simp}, mathematical physics \cite{Lu3}, logic and computer science \cite{HTT},\cite{Voe}.
Higher categories generalize categories: the latter comprises objects and arrows, while higher categories admit higher arrows (also called higher cells) and compositions between them.

In a strict higher category the compositions of higher cells are associative and unital, like in a category. Although easy to define, strict higher categories are insufficient for many applications, and the broader class of weak higher categories is needed. In the latter, compositions are associative and unital only up to an invertible cell in the next dimension, and these associativity and unit isomorphisms are suitably compatible or coherent.

Making this intuition precise is quite complex: in low dimensions, it gave rise to the notions of bicategory \cite{Ben} and tricategory \cite{GPS}, in which the associtivity and unit isomorphisms and the relative coherence axioms are given explicitely. In dimension higher than $3$, the complexity of the structure necessitates a different approach: one in which a combinatorial machinery can encode the idea of a weak $n$-category while the coherence axioms are automatically satisfied.

Several different models of weak $n$-category exist, using a variety of techniques such as operads \cite{B}, \cite{Cheng2}, \cite{Lu2}, simplicial sets \cite{Simp}, \cite{Ta}, opetopes  \cite{Cheng1} and many others.

Several models have also been developed for  weak higher categories admitting cells in all dimensions, giving rise to  notions of $(\infty,n)$-category \cite{BeRe}, \cite{Be3}, \cite{Be2}, \cite{Bk1}, \cite{Lu3}, \cite{L2} as well as weak $\omega$-category \cite{Ve}.

In this work we concentrate on the 'truncated' case, with cells in dimensions $0$ up to $n$. This is close to one of the original motivations for the development of higher categories, namely the modelling of the building blocks of spaces, the $n$-types, and it is of fundamental importance for higher category theory. It also leads to applications to homotopy theory in the search for a combinatorial description of the $k$-invariants of spaces and of simplicial categories.

In \cite{Pa2},\cite{Pa3} the author introduced a new higher categorical structure, the category $\catwg{n}$ of weakly globular $n$-fold categories. In this paper we show that $\catwg{n}$ constitutes a model of weak $n$-categories. We show this by proving that weakly globular $n$-fold categories are suitably equivalent to one of the existing models of higher categories, the Tamsamani $n$-categories \cite{Ta}, \cite{Simp} and that they give a model of $n$-types in the higher groupoidal case. The latter (also called the homotopy hypothesis) is one of the main desiderata from a model of weak $n$-categories, while the comparison with the Tamsamani model is a contribution to the still largely open problem of comparing between different models of higher categories.

The category $\catwg{n}$ is based on the simple structure of iterated internal categories, or $n$-fold categories. This offers advantages in terms of applications. In forthcoming projects, we will exploit the $n$-fold nature of our structure to bridge between the simplicial and operadic approaches to higher categories, and we will develop algebraically defined cohomology theories for groupoidal weakly globular $n$-fold categories to study the $k$-invariants of spaces and of simplicial categories.

\subsection{The three Segal-type models} Our model lies in the context of three different Segal-type models: the weakly globular $n$-fold categories $\catwg{n}$ introduced by the author in \cite{Pa2}, the Tamsamani $n$-categories $\ta{n}$ developed by Tamsamani \cite{Ta} and Simpson \cite{Simp}, and the weakly globular Tamsamani $n$-categories $\tawg{n}$ introduced by the author in \cite{Pa3}. There are full and faithful embeddings:
\begin{equation*}
\xymatrix@R15pt @C30pt{
& \tawg{n} & \\
\Tan \ar@{^{(}->}[ur] & & \catwg{n} \ar@{_{(}->}[ul]\\
& n\mi\Cat \ar@{_{(}->}[ul]\ar@{^{(}->}[ur] &
}
\end{equation*}
The category $n\mi\Cat$ of strict $n$-categories admits a multi-simplicial description as the full subcategory of $\nm$-fold simplicial objects $X\in \funcat{n-1}{\Cat}$ satisfying the following
\begin{itemize}
  \item [(i)] $X_0 \in \funcat{n-2}{\Cat}$ and $X_{\uset{r}{1...1}0} \in \funcat{n-r-2}{\Cat}$ are discrete - that is constant multi-simplicial sets - for all $1\leq r \leq n-2$.

      \noindent Here we use  Notation \ref{not-simp}.
      \medskip

  \item [(ii)] The Segal maps (see Definition \ref{def-seg-map}) in all directions are isomorphisms.
\end{itemize}
The sets $X_0$ (resp. $X_{\uset{r}{1...1}0}$) in (i) correspond to the sets of 0-cells (resp. $r$-cells) for $1\leq r\leq n-2$; the sets of $\nm$ and $n$-cells are given by $X_{\uset{n-1}{1...1}0}$ and $X_{\uset{n}{1...1}}$ respectively.

The isomorphisms of the Segal maps (condition (ii)) ensures that the composition of cells is associative and unital.

The discreteness condition (i) is also called the globularity condition. The name comes from the fact that it determines the globular shape of the cells in a strict $n$-category. For instance, when $n=2$, we can picture 2-cells as globes
\begin{equation*}
\xymatrix{
\bullet \ar@/^2pc/^f[rr] \ar@/_2pc/_g[rr] &  \Downarrow \xi & \bullet
}
\end{equation*}
Strict $n$-categories have several applications, for instance in homotopy theory in the groupoidal case where they are equivalent to crossed $n$-complexes (see \cite{BHS}). However, they do not satisfy the homotopy hypothesis (see \cite{S2}) for a counterexample that strict 3-groupoids do not model 3-types).

Therefore we must relax the structure to obtain a model of weak $n$-category. Using the multi-simplicial framework, we consider three approaches to this:
\begin{itemize}
  \item [a)] In the first approach, we preserve the globularity condition (i) and we relax the Segal map condition (ii) by allowing the Segal maps to be suitably defined higher categorical equivalences. This makes the composition of cells no longer strictly associative and unital. This approach leads to the category $\ta{n}$ of weakly globular $n$-fold categories.
      \medskip

  \item [b)] In the second approach condition (ii) is preserved while the globularity condition (i) is replaced by weak globularity: the objects $X_0$, $X_{\uset{r}{1...1}0}$ ($1\leq r\leq n-2$) are no longer discrete but 'homotopically discrete' in a higher categorical sense that allows iterations. The notion of homotopically discrete $n$-fold category is a higher order version of equivalence relations. In particular, if $X$ is homotopically discrete, it is suitably equivalent to a discrete $n$-fold category $X^d$ via a map $\zg : X\rw X^d$. This approach leads to the category $\catwg{n}$ of weakly globular $n$-fold categories.
      \medskip

  \item [c)] In the third approach, both conditions (i) and (ii) are relaxed. This leads to the category $\tawg{n}$ of weakly globular Tamsamani $n$-categories.
\end{itemize}
The following are some common features of the three models, which we denote collectively by $\seg{n}$.
\begin{itemize}
  \item [(1)] $\seg{n}$ is defined inductively on dimension, starting with $\seg{1}=\Cat$ and $\seg{n}\subset \funcat{}{\seg{n-1}}$. In particular, unravelling this definition gives an embedding
      \begin{equation*}
        J_{n}:\seg{}\rw\funcat{n-1}{\Cat}\;.
      \end{equation*}
      \smallskip

  \item [(2)]The second common feature is the (weak) globularity condition. Namely, if $X\in\seg{n}$, then $X_0$ is a homotopically discrete $\nm$-fold category, and it is discrete if $X\in\Tan$. .
      \medskip

  \item [(3)] Given $X\in\seg{n}$, we can apply the functor isomorphism classes of objects functor $p:\Cat\rw\Set$ levelwise to $J_n X\in \funcat{n-1}{\Cat}$ to obtain $\ovl{p}J_n X\in\funcat{n-1}{\Set}$. We require that this is the multinerve of an object of $\seg{n-1}$; that is, there is a functor
      \begin{equation*}
        \p{n}:\seg{n}\rw\seg{n-1}
      \end{equation*}
      making the following diagram commute:
      \begin{equation*}
      \xymatrix{
      \seg{n} \ar@{^{(}->}^{J_n}[rr]  \ar_{\p{n}}[d] && \funcat{n-1}{\Cat} \ar^{\ovl{p}}[d] \\
      \seg{n-1} \ar@{^{(}->}_{\Nb{n-1}}[rr]  && \funcat{n-2}{\Set}
      }
      \end{equation*}
      The functor $\p{n}$, called $n^{th}$ truncation functor, is used to inductively define $n$-equivalences in $\seg{n}$.

      Given $X\in\seg{n}$ and $(a,b)\in X_0^d$, let $X(a,b)\subset X_1$ be the fiber of the map
      \begin{equation*}
        X_1 \xrw{(d_0,d_1)} X_0\times X_0 \xrw{\zg\times\zg} X_0^d\times X_0^d\;.
      \end{equation*}

      Then 1-equivalences in $\seg{n}$ are just equivalences of categories. Inductively, if we defined $\nm$-equivalences in $\seg{n-1}$, we say that a map $f:X\rw Y$ in $\seg{n}$ is a $n$-equivalence if for all $a,b \in X_0$,
      \begin{equation*}
        f(a,b):X(a,b)\rw Y(fa,fb)
      \end{equation*}
      are $\nm$-equivalences and $\p{n}f$ is a $\nm$-equivalence. This definition is a higher dimensional generalization of a functor which is fully faithful and essentially surjective on objects.
\medskip

  \item [(4)] Given $X\in \seg{n}$, since $X\in\funcat{}{\seg{n-1}}$ and there is a map $\zg:X_0\rw X_0^d$, we can consider the induced Segal maps for $k\geq 2$:
       \begin{equation*}
         \hmu{k}:X_k \rw \pro{X_1}{X_0^d}{k}\;.
       \end{equation*}
       In defining $\seg{n}$ we require these maps to be $\nm$-equivalences. Note that when $X\in\Tan$, $\zg=\Id$, so $\hmu{k}$ are just the Segal maps.
\end{itemize}

\subsection{Main results}\label{subs-intro-main}
Our main result, Corollary \ref{cor-the-disc-func}, is that there are comparison functors
\begin{equation*}
    \Qn:\Tan \leftrightarrows \catwg{n}:\Discn
\end{equation*}
inducing equivalences of categories
\begin{equation*}
    \Tan\bsim^n\;\simeq\;\Cat\bsim^n
\end{equation*}
after localization with respect to the $n$-equivalences. The functor $\Qn$, called rigidification, was introduced by the author in \cite{Pa3} while the functor $\Discn$, called discretization, is developed in this paper.

We also introduce (see Definition \ref{def-gta-1}) the full subcategory
\begin{equation*}
    \gcatwg{n}\subset \catwg{n}
\end{equation*}
of groupoidal weakly globular $n$-fold categories and we show (Corollary \ref{cor-gta-2}) that there is an equivalence of categories
\begin{equation*}
    \gcatwg{n}\bsim^n\;\simeq\;\Ho\mbox{\rm\text{(n-types)}}\;.
\end{equation*}
This means that our model of weak $n$-categories satisfies the homotopy hypothesis. A direct construction of the functor
\begin{equation*}
    G_n :\Top\rw\gcatwg{n}
\end{equation*}
was given by Blanc and the author in \cite{BP}.

\subsection{The discretization functor}\label{subs-intro-discr}
The idea of the functor $\Discn$ is to replace the homotopically discrete structures in $X\in\catwg{n}$ by their discretizations in order to recover the globularity condition. This affects the Segal maps, which from being isomorphisms in $X$ become $(n-1)$-equivalences in $\Discn X$.

We illustrate this in the case $n=2$. Given $X\in\catwg{2}$, by definition $X_0\in\cathd{}$, so there is a discretization map $\zg:X_0\rw X_0^d$ which is an equivalence of categories. Given a choice $\zg'$ of pseudo-inverse, we have $\zg\zg'=\Id$ since $X_0^d$ is discrete.

We can therefore construct $D_0 X\in\funcat{}{\Cat}$ as follows
\begin{equation*}
    (D_0 X)_k = \left\{
                  \begin{array}{ll}
                    X_0^d, & k=0 \\
                    X_k, & k>0\;.
                  \end{array}
                \right.
\end{equation*}
The face maps $(D_0 X)_1\rightrightarrows (D_0 X)_0$ are given by $\zg \pt_i$\; $i=0,1$ (where $\pt_i$ are the face maps for $X$) while the degeneracy map $(D_0 X)_0 \rw (D_0 X)_1$ is $\zs_0\zg'$. All other face and degeneracy maps are as in $X$. Since $\zg\zg'=\Id$, all simplicial identities are satisfied. By construction, $(D_0 X)_0$ is discrete while the Segal maps are given by
\begin{equation*}
    \pro{X_1}{X_0}{k}\rw \pro{X_1}{X_0^d}{k}
\end{equation*}
and these are equivalences of categories since $X\in\catwg{2}$. Thus, by definition, $D_0 X\in\ta{2}$. This construction however does not afford a functor $D_0:\catwg{2}\rw\ta{2}$, but only a functor
\begin{equation*}
    D_0:\catwg{2}\rw(\ta{2})_{\ps}
\end{equation*}
where $(\ta{2})_{\ps}$ is the full subcategory of $\psc{}{\Cat}$ whose objects are in $\ta{2}$. This is because, for any morphism $F:X\rw Y$ in $\ta{2}$, the diagram in $\Cat$
\begin{equation*}
\xymatrix@C=35pt{
X_0^d \ar^{f^d}[r] \ar_{\zg'(X_0)}[d] & Y_0^d \ar^{\zg'(Y_0)}[d]\\
X_0 \ar_{f}[r] & Y_0
}
\end{equation*}
in general only pseudo-commutes. Hence $D_0$ cannot be used as a definition of $Disc_2$. To overcome this problem we replace $\catwg{n}$ with a category $\ftawg{n}$ where there are functorial choices of sections to the discretization maps of the homotopically discrete structures, and we then build the discretization $D_n : \ftawg{n}\rw\ta{n}$ with the above construction (and its higher dimensional analogue).

The discretization functor $\Discn : \catwg{n}\rw\ta{n}$ is then the composite
\begin{equation*}
    \catwg{n}\xrw{G_n}\ftawg{n}\xrw{D_n}\ta{n}\;.
\end{equation*}
\subsection{Organization of the paper}\label{subs-intro-org}
In Section \ref{sec-prelim} we recall some basic background on (multi) simplicial techniques. In Section \ref{sec-segal-type-model} we recall the three Segal type models from \cite{Ta} and the author's works \cite{Pa1}, \cite{Pa2}, \cite{Pa3}, as well as the construction of the rigidification functor $\Qn$ from \cite{Pa3}.

In Section \ref{sec-canonical} we prove a technical result (Corollary \ref{cor-gen-const-1}) which is the basis of the construction of the category $\ftawg{n}$ of Section \ref{sec-fta}.

In Section \ref{sec-fta} (Proposition \ref{pro-fta-1}) we build the functor
\begin{equation*}
    G_n:\catwg{n}\rw \ftawg{n}
\end{equation*}
and we use this in Section \ref{sec-fta-to-tam} to construct the discretization functor
\begin{equation*}
    \Discn : \catwg{n}\rw \ta{n}\;.
\end{equation*}
In Section \ref{sec-group-wg-nfol-cat} we define the category $\gcatwg{n}$ of groupoidal weakly globular \nfol categories and we show that it is an algebraic model of $n$-types.

%%%%%%%%%%%%%%%%%%%%%%%%%%%%%%%%%%%%%%%%%%%%%%%%%%%%%%%%%%%%%%%%%%%%%%%%%

\bk

\textbf{Acknowledgements}: This work is supported by a Marie Curie International Reintegration Grant No 256341. I thank the Centre for Australian Category Theory for their hospitality and financial support during August-December 2015, as well as the University of Leicester for its support during my study leave.  I also thank the University of Chicago for their hospitality and financial support during April 2016.

%%%%

\section{Preliminaries}\label{sec-prelim}
In this section we review some basic simplicial techniques that we will use throughout the paper as well as some categorical background on pseudo-functors and their strictification, and on a technique to produce pseudo-functors. The material in this section is well-known, see for instance \cite{Borc}, \cite{Jard}, \cite{lk}, \cite{PW}, \cite{Lack}.
\subsection{Simplicial objects}\label{sbs-simp-tech}
Let $\zD$ be the simplicial category and let $\Dnop$ denote the product of $n$ copies of $\Dop$. Given a category $\clC$, $\funcat{n}{\clC}$ is called the category of $n$-simplicial objects in $\clC$ (simplicial objects in $\clC$ when $n=1$).
\begin{notation}\label{not-simp}
    If $X\in \funcat{n}{\clC}$ and $\uk=([k_1],\ldots,[k_n])\in \Dnop$, we shall denote $X ([k_1],\ldots,[k_n])$ by $X(k_1,\ldots,k_n)$, as well as $X_{k_1,\ldots,k_n}$ and $X_{\uk}$. We shall also denote $\uk(1,i)=([k_1],\ldots,[k_{i-1}],1,[k_{i+1}],\ldots,[k_n]) \in \Dnop$ for $1\leq i\leq n$.

    Every $n$-simplicial object in $\clC$ can be regarded as a simplicial object in $\funcat{n-1}{\clC}$ in $n$ possible ways. For each $1\leq i\leq n$ there is an isomorphism
    \begin{equation*}
        \xi_i:\funcat{n}{\clC}\rw\funcat{}{\funcat{n-1}{\clC}}
    \end{equation*}
    given by
    \begin{equation*}
        (\xi_i X)_{r}(k_1,\ldots,k_{n-1})=X(k_1,\ldots,k_{i-1},r,k_{i+1},\ldots,k_{n-1})
    \end{equation*}
    for $X\in\funcat{n}\clC$ and $r\in\Dop$.
\end{notation}
\begin{definition}\label{def-fun-smacat}
    Let $F:\clC \rw \clD$ be a functor, $\clI$ a small category. Denote
    \begin{equation*}
        \ovl{F}:[\clI,\clC]\rw [\clI,\clD]
    \end{equation*}
    the functor given by
    \begin{equation*}
        (\ovl{F} X)_i = F(X(i))
    \end{equation*}
    for all $i\in\clI$.
\end{definition}
\begin{definition}\label{def-seg-map}
    Let ${X\in\funcat{}{\clC}}$ be a simplicial object in any category $\clC$ with pullbacks. For each ${1\leq j\leq k}$ and $k\geq 2$, let ${\nu_j:X_k\rw X_1}$ be induced by the map  $[1]\rw[k]$ in $\Delta$ sending $0$ to ${j-1}$ and $1$ to $j$. Then the following diagram commutes:
\begin{equation}\label{eq-seg-map}
\xymatrix@C=20pt{
&&&& X\sb{k} \ar[llld]_{\nu\sb{1}} \ar[ld]_{\nu\sb{2}} \ar[rrd]^{\nu\sb{k}} &&&& \\
& X\sb{1} \ar[ld]_{d\sb{1}} \ar[rd]^{d\sb{0}} &&
X\sb{1} \ar[ld]_{d\sb{1}} \ar[rd]^{d\sb{0}} && \dotsc &
X\sb{1} \ar[ld]_{d\sb{1}} \ar[rd]^{d\sb{0}} & \\
X\sb{0} && X\sb{0} && X\sb{0} &\dotsc X\sb{0} && X\sb{0}
}
\end{equation}

If  ${\pro{X_1}{X_0}{k}}$ denotes the limit of the lower part of the
diagram \eqref{eq-seg-map}, the \emph{$k$-th Segal map for $X$} is the unique map
$$
\muk:X\sb{k}~\rw~\pro{X\sb{1}}{X\sb{0}}{k}
$$
\noindent such that ${\pr_j\,\muk=\nu\sb{j}}$ where
${\pr\sb{j}}$ is the $j^{th}$ projection.
\end{definition}
\begin{definition}\label{def-ind-seg-map}

    Let ${X\in\funcat{}{\clC}}$ and suppose that there is a map $\zg: X_0 \rw X^d_0$ in $\clC$  $\zg: X_0 \rw X^d_0$  such that the limit of the diagram
\begin{equation*}
\xymatrix@R25pt@C16pt{
& X\sb{1} \ar[ld]_{\zg d\sb{1}} \ar[rd]^{\zg d\sb{0}} &&
X\sb{1} \ar[ld]_{\zg d\sb{1}} \ar[rd]^{\zg d\sb{0}} &\cdots& k &\cdots&
X\sb{1} \ar[ld]_{\zg d\sb{1}} \ar[rd]^{\zg d\sb{0}} & \\
X^d\sb{0} && X^d\sb{0} && X^d\sb{0}\cdots &&\cdots X^d\sb{0} && X^d\sb{0}
    }
\end{equation*}
exists; denote the latter by $\pro{X_1}{X_0^d}{k}$. Then the following diagram commutes, where $\nu_j$ is as in Definition \ref{def-seg-map}, and $k\geq 2$
\begin{equation*}
\xymatrix@C=20pt{
&&&& X\sb{k} \ar[llld]_{\nu\sb{1}} \ar[ld]_{\nu\sb{2}} \ar[rrd]^{\nu\sb{k}} &&&& \\
& X\sb{1} \ar[ld]_{\zg d\sb{1}} \ar[rd]^{\zg d\sb{0}} &&
X\sb{1} \ar[ld]_{\zg d\sb{1}} \ar[rd]^{\zg d\sb{0}} && \dotsc &
X\sb{1} \ar[ld]_{\zg d\sb{1}} \ar[rd]^{\zg d\sb{0}} & \\
X^d\sb{0} && X^d\sb{0} && X^d\sb{0} &\dotsc X^d\sb{0} && X^d\sb{0}
}
\end{equation*}
The \emph{$k$-th induced Segal map for $X$} is the unique map
\begin{equation*}
\hmuk:X\sb{k}~\rw~\pro{X\sb{1}}{X^d\sb{0}}{k}
\end{equation*}
such that ${\pr_j\,\hmuk=\nu\sb{j}}$ where ${\pr\sb{j}}$ is the $j^{th}$ projection.
\end{definition}
\subsection{$\mathbf{n}$-Fold internal categories}\label{sbs-nint-cat}

Let  $\clC$ be a category with finite limits. An internal category $X$ in $\clC$ is a diagram in $\clC$
\begin{equation}\label{eq-nint-cat}
\xymatrix{
\tens{X_1}{X_0} \ar^(0.65){m}[r] & X_1 \ar^{d_0}[r]<2.5ex> \ar^{d_1}[r] & X_0
\ar^{s}[l]<2ex>
}
\end{equation}
where $m,d_0,d_1,s$ satisfy the usual axioms of a category (see for instance \cite{Borc} for details). An internal functor is a morphism of diagrams like \eqref{eq-nint-cat} commuting in the obvious way. We denote by $\Cat \clC$ the category of internal categories and internal functors.

The category $\cat{n}(\clC)$ of \nfol categories in $\clC$ is defined inductively by iterating $n$ times the internal category construction. That is, $\cat{1}(\clC)=\Cat$ and, for $n>1$,
\begin{equation*}
  \cat{n}(\clC)= \Cat(\cat{n-1}(\clC)).
\end{equation*}

When $\clC=\Set$, $\cat{n}(\Set)$ is simply denoted by $\cat{n}$ and called the category of \nfol categories (double categories when $n=2$).

%%%%%%%%%%%%%%%%%%%%%%%%%%%%%%%%%%%%%%%%%%%%%%%%%%%%%%%%%%%%%%%%%%%%%%%%%%%%
\subsection{Nerve functors}\label{sus-ner-funct}

There is a nerve functor
\begin{equation*}
    N:\Cat\clC \rw \funcat{}{\clC}
\end{equation*}
such that, for $X\in\Cat\clC$
\begin{equation*}
    (N X)_k=
    \left\{
      \begin{array}{ll}
        X_0, & \hbox{$k=0$;} \\
        X_1, & \hbox{$k=1$;} \\
        \pro{X_1}{X_0}{k}, & \hbox{$k>1$.}
      \end{array}
    \right.
\end{equation*}
When no ambiguity arises, we shall sometimes denote $(NX)_k$ by $X_k$ for all $k\geq 0$.

The following fact is well known:
\begin{proposition}\label{pro-ner-int-cat}
    A simplicial object in $\clC$ is the nerve of an internal category in $\clC$ if and only if all the Segal maps are isomorphisms.
\end{proposition}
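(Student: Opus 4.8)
The plan is to prove the two implications separately: the forward one is a direct unwinding of the definition of $N$, while the backward one requires reconstructing an internal category structure from the Segal condition.

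For the forward direction, suppose $Y=NX$ for an internal category $X$ in $\clC$. By the definition of the nerve functor we have $Y_0=X_0$, $Y_1=X_1$ and, for $k\geq 2$, $Y_k=\pro{X_1}{X_0}{k}$, which is precisely the limit serving as the target of the $k$-th Segal map of $Y$. It then remains to check that the maps $\nu_j\colon Y_k\rw Y_1$ of Definition \ref{def-seg-map} coincide with the projections of this limit; this holds by construction of the face maps of the nerve. Hence each $\muk$ is the canonical comparison of $\pro{X_1}{X_0}{k}$ with itself, that is the identity, and is in particular an isomorphism.

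For the backward direction, suppose all the Segal maps $\muk\colon Y_k\rw\pro{Y_1}{Y_0}{k}$ are isomorphisms. I would define an internal category $X$ by setting $X_0=Y_0$, $X_1=Y_1$, taking source and target to be the face maps $d_0,d_1\colon Y_1\rw Y_0$, the unit to be the degeneracy $s_0\colon Y_0\rw Y_1$, and the composition to be $m=d_1\circ\mu_2^{-1}\colon \tens{Y_1}{Y_0}\rw Y_1$, where $d_1\colon Y_2\rw Y_1$ is the inner face map. The key calculations are then twofold: (a) the category axioms, where the two unit laws follow from the simplicial identities relating $s_0,s_1$ to the inner face map on $Y_2$, and associativity follows from the simplicial identity $d_1d_2=d_1d_1$ on $Y_3$ once the two iterated composites are identified with the two bracketings via the isomorphism $\mu_3$; and (b) that $NX\cong Y$, exhibited by the family $\{\muk\}_{k\geq 2}$ together with identities in degrees $0$ and $1$, which I would verify is natural in $[k]$, i.e. commutes with all faces and degeneracies. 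Under the Segal isomorphisms, outer faces drop a boundary edge, inner faces compose adjacent edges through $m$, and degeneracies insert units through $s_0$, all matching the nerve by construction.

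The main obstacle is the backward direction, and within it the genuinely non-formal content is the two checks just named: translating the simplicial identities on $Y_2$ and $Y_3$ into the unit and associativity axioms for $m$, and verifying that the levelwise isomorphisms $\{\muk\}$ actually assemble into a morphism of simplicial objects. Both steps must be carried out internally, using only the finite limits available in $\clC$; the Segal isomorphisms are exactly what allow one to pass freely between $Y_k$ and $k$-tuples of composable arrows, thereby reducing every such verification to a manipulation of the simplicial identities.
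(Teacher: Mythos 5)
The paper offers no proof of this statement: it is introduced with ``The following fact is well known'' and used as an imported fact (with \cite{Borc} among the background references), so there is no argument of the paper's to compare against. Your proof is the standard one that this remark alludes to, and it is correct: the forward direction is indeed immediate from $(NX)_k=\pro{X_1}{X_0}{k}$ once one checks $\nu_j=\pr_j$, and in the converse your choices $m=d_1\circ\mu_2^{-1}$, unit $s_0$, with unit laws from $\mu_2 s_0=(s_0d_1,\Id)$, $\mu_2 s_1=(\Id,s_0d_0)$ together with $d_1s_0=d_1s_1=\Id$, and associativity from $d_1d_1=d_1d_2$ on $Y_3$ transported through $\mu_3$, are exactly the standard reconstruction. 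The only items you leave implicit that a full write-up should include are the source/target compatibilities $d_0m=d_0\pr_2$ and $d_1m=d_1\pr_1$ (from the level-$2$ simplicial identities $d_id_j=d_{j-1}d_i$) and the simplicial naturality of $\{\muk\}$, which you correctly flag as the remaining verification; both are routine manipulations of the same identities and pose no obstacle.
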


By iterating the nerve construction, we obtain the multinerve functor
\begin{equation*}
    \Nn :\cat{n}(\clC)\rw \funcat{n}{\clC}\;.
\end{equation*}
\begin{definition}\label{def-discrete-nfold}
An internal $n$-fold category $X\in \cat{n}(\clC)$ is said to be discrete if $\Nn X$ is a constant functor.
\end{definition}

 Each object of $\cat{n}(\clC)$ can be considered as an internal category in $\cat{n-1}(\clC)$ in $n$ possible ways, corresponding to the $n$ simplicial directions of its multinerve. To prove this, we use the following lemma, which is a straightforward consequence of the definitions.

\begin{lemma}\label{lem-multin-iff}\
\begin{itemize}
      \item [a)] $X\in\funcat{n}{\clC}$ is the multinerve of an \nfol category in $\clC$ if and only if, for all $1\leq r\leq n$ and $[p_1],\ldots,[p_r]\in\Dop$, $p_r\geq 2$
\begin{equation}\label{eq-multin-iff}
\begin{split}
    &  X(p_1,...,p_r,\mi)\cong\\
    &\resizebox{1.0\hsize}{!}{$\cong\pro{X(p_1,...,p_{r-1},1,\mi)}{X(p_1,...,p_{r-1},0,\mi)}{p_r}$}
\end{split}
\end{equation}
      \item [b)] Let $X\in\cat{n}(\clC)$. For each $1\leq k\leq n$, $[i]\in\Dop$, there is $X_i\up{k}\in\cat{n-1}(\clC)$ with
\begin{equation*}
    \Nb{n-1}X_i\up{k} (p_1,\ldots,p_{n-1})=\Nn X(p_1,\ldots,p_{k-1},i,p_k,\ldots,p_{n-1})
\end{equation*}
    \end{itemize}
\end{lemma}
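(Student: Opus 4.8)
The plan is to prove part (a) by induction on $n$, taking Proposition \ref{pro-ner-int-cat} as the base case $n=1$ (there only $r=1$ occurs, the trailing ``$-$'' is empty, and \eqref{eq-multin-iff} is exactly the assertion that all Segal maps are isomorphisms), and then to deduce part (b) from part (a) together with the pointwise computation of limits in functor categories. The organising observation is that, unravelling $\cat{n}(\clC)=\Cat(\cat{n-1}(\clC))$, the multinerve factors as $\Nn=\ovl{\Nb{n-1}}\circ N$, where $N:\Cat(\cat{n-1}(\clC))\rw\funcat{}{\cat{n-1}(\clC)}$ is the outer nerve and $\ovl{\Nb{n-1}}$ applies the $(n-1)$-fold multinerve levelwise; under the isomorphism $\xi_1$ of Notation \ref{not-simp} this exhibits $X\in\funcat{n}{\clC}$ as a simplicial object in the first direction valued in $\funcat{n-1}{\clC}$. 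I will freely use that each nerve functor is full and faithful and preserves finite limits, so the same holds for $\Nn$, and hence that these functors reflect isomorphisms.

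For the forward direction of (a), suppose $X=\Nn Z$ with $Z\in\cat{n}(\clC)$. Since $NZ$ has isomorphic Segal maps by Proposition \ref{pro-ner-int-cat} and $\Nb{n-1}$ preserves the fibre products $\pro{Z_1}{Z_0}{p_1}$, applying $\Nb{n-1}$ levelwise yields the case $r=1$ of \eqref{eq-multin-iff}; the cases $2\le r\le n$ follow by applying the inductive hypothesis to each $(n-1)$-fold category $(NZ)_{p_1}$, after the re-indexing $r\mapsto r-1$. For the converse, assume $X$ satisfies \eqref{eq-multin-iff} for all $r$. The cases $2\le r\le n$ say that each $X(p_1,-)\in\funcat{n-1}{\clC}$ satisfies the $(n-1)$-dimensional instance of the lemma, so by induction and full faithfulness there are unique $(n-1)$-fold categories $W_{p_1}$ with $\Nb{n-1}W_{p_1}=X(p_1,-)$; full faithfulness then lifts the first-direction faces and degeneracies to a simplicial object $W_\bl$ in $\cat{n-1}(\clC)$ with $\ovl{\Nb{n-1}}W_\bl=X$. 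The case $r=1$, via limit-preservation and conservativity of $\Nb{n-1}$, forces the Segal maps of $W_\bl$ to be isomorphisms, whence $W_\bl=NZ$ for a unique $Z\in\cat{n}(\clC)$ by Proposition \ref{pro-ner-int-cat}, and $\Nn Z=X$.

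To obtain (b) I will first note that, since limits in $\funcat{n-r}{\clC}$ are computed pointwise, the isomorphism \eqref{eq-multin-iff} of $(n-r)$-simplicial objects is equivalent to the family of Segal isomorphisms in the $r$-th direction in which \emph{all} remaining coordinates, both those preceding and those following the $r$-th, are frozen to arbitrary values. Thus a multinerve satisfies the Segal condition in every direction with all other coordinates held fixed. Given $X\in\cat{n}(\clC)$, $1\le k\le n$ and $[i]\in\Dop$, set $Y(p_1,\ldots,p_{n-1})=\Nn X(p_1,\ldots,p_{k-1},i,p_k,\ldots,p_{n-1})$. Each Segal condition for $Y$ in one of its $n-1$ directions is, after the evident shift of index across slot $k$, one of these frozen-coordinate Segal conditions for $\Nn X$ (with the $k$-th coordinate frozen at $i$), so $Y$ satisfies \eqref{eq-multin-iff} in dimension $n-1$. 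By part (a), $Y=\Nb{n-1}X_i\up{k}$ for a unique $X_i\up{k}\in\cat{n-1}(\clC)$, which is the required object.

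Although the lemma is a ``straightforward consequence of the definitions'', the two places demanding care are: (i) verifying that $\Nb{n-1}$ preserves precisely the fibre-product limits appearing in the Segal maps, which is the input allowing isomorphisms to be transported between the outer nerve and the inner multinerve; and (ii) the index bookkeeping in (b), namely matching the directions of the restricted object $Y$ with the directions of $\Nn X$ across the frozen slot $k$. Point (ii) is where the frozen-later-coordinate form of the Segal condition, rather than the literal ``trailing $-$'' form of \eqref{eq-multin-iff}, becomes essential, and I expect it to be the main potential source of error.
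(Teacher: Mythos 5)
Your proposal is correct and follows essentially the same route as the paper: induction on $n$ for part a) with Proposition \ref{pro-ner-int-cat} as the base case, using the factorization of $\Nn$ through the outer nerve for both directions of the equivalence, and deducing part b) by passing to the pointwise (frozen-coordinate) form of \eqref{eq-multin-iff} and applying part a) to the $(n-1)$-simplicial object obtained by fixing the $k$-th coordinate at $i$. You merely make explicit two points the paper leaves implicit -- that $\Nb{n-1}$ is fully faithful and preserves the relevant fibre products, and the index shift across the frozen slot $k$ -- which is a fair reading of a proof the paper itself calls a straightforward consequence of the definitions.
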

\begin{proof}\

  a) By induction on $n$. By Proposition \ref{pro-ner-int-cat}, it is true for $n=1$. Suppose it holds for $n-1$ and let $X\in\Cat(\cat{n-1}(\clC))$ with objects of objects (resp. arrows) $X_0$ (resp. $X_1$); denote $(NX)_p=X_p$. By definition of the multinerve
      \begin{equation*}
        (\Nb{n} X)(p_1,\ldots,p_r,\mi)=\Nb{n-1}X_{p_1}(p_2,\ldots,p_r,\mi)\;.
      \end{equation*}
      Hence using the induction hypothesis
\begin{align*}
       &\Nb{n}X(p_1...p_r\,\mi)=\Nb{n-1}X_{p_1}(p_2... p_r\,\mi)\cong\\
&\resizebox{1.0\hsize}{!}{$
\cong \pro{\Nb{n-1} X_{p_1}(p_2... p_{r-1}\,1\,\mi)}
         {\Nb{n-1} X_{p_1}(p_2... p_{r-1}\,0\,\mi)}{p_r}=$}\\
&\resizebox{1.0\hsize}{!}{
          $=\pro{\Nb{n} X(p_1... p_{r-1}\,1\,\mi)}
         {\Nb{n} X(p_1... p_{r-1}\,0\,\mi)}{p_r}.$}
\end{align*}
Conversely, suppose $X\in\funcat{n}{\clC}$ satisfies \eqref{eq-multin-iff}. Then for each $[p]\in\Dop$, $X(p,\mi)$ satisfies \eqref{eq-multin-iff}, hence
\begin{equation*}
    X(p,\mi)=\Nb{n-1}X_p
\end{equation*}
for $X_p\in\cat{n-1}(\clC)$. Also, by induction hypothesis
\begin{equation*}
    X(p,\mi)=\pro{X(1,\mi)}{X(0,\mi)}{p}\;.
\end{equation*}
Thus we have the object $X\in\cat{n}(\clC)$ with objects $X_0$, arrows $X_1$ and $X_p=X(p,\mi)$ as above.

\bigskip
b) By part a), there is an isomorphism for $p_r\geq 2$

\begin{flalign*}
       &\Nb{n}X(p_1...p_n)=&
\end{flalign*}
\begin{equation*}
\resizebox{1.0\hsize}{!}{$\pro{\Nb{n}X(p_1...p_{r-1}\, 1 ...p_n)}{\Nb{n}X(p_1...p_{r-1}\, 0 ...p_n)}{p_r}$}\;.
\end{equation*}
In particular, evaluating this at $p_k=i$, this is saying the $(n-1)$-simplicial group taking $(p_1...p_n)$ to $\Nb{n}X(p_1...p_{k-1}\, i ...p_{n-1})$ satisfies condition \eqref{eq-multin-iff} in part a). Hence by part a) there exists $X_i\up{k}$ with
\begin{equation*}
    \Nb{n-1}X_i\up{k}(p_1...p_{n-1})=\Nb{n}X(p_1...p_{k-1}\, i ...p_{n-1})
\end{equation*}
as required.
\end{proof}
\begin{proposition}\label{pro-assoc-iso}
    For each $1\leq k\leq n$ there is an isomorphism $\xi_k:\cat{n}(\clC)\rw \cat{n}(\clC)$ which associates to $X=\cat{n}(\clC)$ an object $\xi_k X$ of $\Cat(\cat{n-1}(\clC))$ with
    \begin{equation*}
        (\xi_k X)_i=X_i\up{k}\qquad i=0,1
    \end{equation*}
    with $X_i\up{k}$ as in Lemma \ref{lem-multin-iff}.
\end{proposition}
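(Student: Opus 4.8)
The plan is to realise $\xi_k$ as the direction\mi permuting isomorphism of Notation \ref{not-simp}, transported through the multinerve, and to read off the internal category structure in $\cat{n-1}(\clC)$ directly from Lemma \ref{lem-multin-iff}(b). Recall that by Lemma \ref{lem-multin-iff}(a) the multinerve $\Nn$ identifies $\cat{n}(\clC)$ with the full subcategory of $\funcat{n}{\clC}$ of objects satisfying \eqref{eq-multin-iff} in every simplicial direction; this embedding is full and faithful, and since $N$ (hence $\Nb{n-1}$) is built from fibre products it preserves the pullbacks used below. The whole point is that permuting simplicial directions preserves the class ``satisfies the Segal condition in all directions,'' so it must carry $n$\mi fold categories to $n$\mi fold categories, merely relabelling which direction is the outermost $\Cat$.

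First I would fix $1\leq k\leq n$ and apply the swap isomorphism $\xi_k:\funcat{n}{\clC}\rw\funcat{}{\funcat{n-1}{\clC}}$ of Notation \ref{not-simp} to $\Nn X$. By the formula defining $\xi_k$ together with Lemma \ref{lem-multin-iff}(b), the resulting simplicial object in $\funcat{n-1}{\clC}$ has $i$\mi th level $\Nb{n-1}X_i\up{k}$; equivalently, $i\mapsto X_i\up{k}$ is a simplicial object in $\cat{n-1}(\clC)$, whose face and degeneracy maps are internal functors because they are the direction\mi$k$ structure maps of $\Nn X$, and these lie in the image of the fully faithful $\Nb{n-1}$.

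Next I would verify the Segal condition for this simplicial object inside $\cat{n-1}(\clC)$. Its $p$\mi th Segal map compares $X_p\up{k}$ with the $p$\mi fold pullback $\pro{X_1\up{k}}{X_0\up{k}}{p}$ formed in $\cat{n-1}(\clC)$; because $\Nb{n-1}$ preserves such pullbacks, this map is an isomorphism exactly when its image under $\Nb{n-1}$ is, and the latter is precisely the isomorphism \eqref{eq-multin-iff} for $r=k$, which holds since $X$ is an $n$\mi fold category. Proposition \ref{pro-ner-int-cat}, applied inside $\cat{n-1}(\clC)$, then exhibits $i\mapsto X_i\up{k}$ as the nerve of an internal category $\xi_k X\in\Cat(\cat{n-1}(\clC))=\cat{n}(\clC)$, with $(\xi_k X)_i=X_i\up{k}$ for $i=0,1$ as asserted; its structure maps are read off from the direction\mi$k$ face and degeneracy maps and the Segal isomorphism exactly as in Proposition \ref{pro-ner-int-cat}. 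Functoriality and invertibility are then formal: a morphism $f:X\rw Y$ induces $\Nn f$, whose swap is levelwise $\Nb{n-1}$ of a unique internal functor $X_i\up{k}\rw Y_i\up{k}$, and these assemble to $\xi_k f$; since the swap of Notation \ref{not-simp} is an isomorphism of functor categories with explicit inverse carrying multinerves of $n$\mi fold categories to nerves of internal categories in $\cat{n-1}(\clC)$ and back, $\xi_k$ is an isomorphism, with $\xi_1=\Id$ recovering the defining presentation $\cat{n}(\clC)=\Cat(\cat{n-1}(\clC))$.

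I expect the only genuinely delicate point to be the interchange of limits in the Segal step: one must be certain that the pullbacks encoding composability in $\cat{n-1}(\clC)$ are computed by the multinerve, i.e. that $\Nb{n-1}$ both preserves and reflects them, so that condition \eqref{eq-multin-iff} in direction $k$ is \emph{literally} the Segal condition for $i\mapsto X_i\up{k}$. Everything else is bookkeeping with the identifications already supplied by Lemma \ref{lem-multin-iff}.
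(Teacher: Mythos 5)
Your proposal is correct and takes essentially the same route as the paper's proof: both transpose $\Nn X$ along the $k$-th direction via the swap of Notation \ref{not-simp}, identify the levels as multinerves of the $X_i\up{k}$ by Lemma \ref{lem-multin-iff}(b), obtain the direction-$k$ Segal isomorphisms from Lemma \ref{lem-multin-iff}(a) (using, as the paper does implicitly, that the multinerve computes the relevant pullbacks), and conclude that the resulting simplicial object is an object of $\Cat(\cat{n-1}(\clC))$, with the inverse given by swapping back. The only cosmetic difference is that the paper constructs the inverse $\xi'_k$ explicitly and checks condition \eqref{eq-multin-iff} for it, whereas you treat invertibility as formal via the isomorphism of functor categories and full faithfulness of the multinerve --- a verification the paper itself dismisses as immediate.
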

\begin{proof}
Consider the object of $\funcat{}{\funcat{n-1}{\clC}}$ taking $i$ to the $\nm$-simplicial object associating to $(\seqc{p}{1}{n-1})$ the object
\begin{equation*}
    \Nb{n}X(p_1...p_{k-1}\, i \, p_{k+1}...p_{n-1})\;.
\end{equation*}
By Lemma \ref{lem-multin-iff} b), the latter is the multinerve of $X_i\up{k}\in \cat{n-1}(\clC)$. Further, by Lemma \ref{lem-multin-iff} a), we have
\begin{equation*}
    \Nb{n-1}X_i\up{k}\cong \pro{\Nb{n-1}X_1\up{k}}{\Nb{n-1}X_0\up{k}}{i}\;.
\end{equation*}
Hence $\Nb{n}X$ as a simplicial object in $\funcat{n-1}{\clC}$ along the $k^{th}$ direction, has
\begin{equation*}
    (\Nb{n}X)_i=
    \left\{
      \begin{array}{ll}
        \Nb{n-1}X_i\up{k}, & \hbox{$i=0,1$;} \\
        \Nb{n-1}(\pro{X_1\up{k}}{X_0\up{k}}{i}), & \hbox{for $i\geq 2$.}
      \end{array}
    \right.
\end{equation*}
This defines $\xi_k X\in\Cat(\cat{n-1}(\clC))$ with
\begin{equation*}
    (\xi_k X)_i=\Nb{n-1}X_i\up{k}\qquad i=0,1\;.
\end{equation*}
We now define the inverse for $\xi_k$. Let $X\in\Cat(\cat{n-1}(\clC))$, and let $X_i=\pro{X_1}{X_0}{i}$ for $i\geq 2$. The $n$-simplicial object $X_{k}$ taking $(p_1,\ldots,p_n)$ to
\begin{equation*}
    \Nb{n}X_{p_k}(p_1...p_{k-1}p_{k+1}...p_n)
\end{equation*}
satisfies condition \eqref{eq-multin-iff}, as easily seen. Hence by Lemma \ref{lem-multin-iff} there is $\xi'_k X\in\cat{n}\clC$ such that $\Nb{n }\xi'_k X=X_k$. It is immediate to check that $\xi_k$ and $\xi'_k$ are inverse bijections.
\end{proof}
\begin{definition}\label{def-ner-func-dirk}
    The nerve functor in the $k^{th}$ direction is defined as the composite
    \begin{equation*}
        \Nu{k}:\cat{n}(\clC)\xrw{\xi_k}\Cat(\cat{n-1}(\clC))\xrw{N}\funcat{}{\cat{n-1}(\clC)}
    \end{equation*}
    so that, in the above notation,
    \begin{equation*}
        (\Nu{k}X)_i=X\up{k}_i\qquad i=0,1\;.
    \end{equation*}
    Note that $\Nb{n}=\Nu{n}...\Nu{2}\Nu{1}$.
\end{definition}
\begin{notation}\label{not-ner-func-dirk}
    When $\clC=\Set$ we shall denote
    \begin{equation*}
        J_n=\Nu{n-1}\ldots \Nu{1}:\cat{n}\rw\funcat{n-1}{\Cat}\;.
    \end{equation*}
\end{notation}
 Thus $J_n$ amounts to taking the nerve construction in all but the last simplicial direction. The functor $J_n$ is fully faithful, thus we can identify $\cat{n}$ with the image $J_n(\cat{n})$ of the functor $J_n$.

 Given $X\in\cat{n}$, when no ambiguity arises we shall denote, for each $(s_1,\ldots ,s_{n-1})\in\Dmenop$
\begin{equation*}
    X_{s_1,\ldots ,s_{n-1}}=(J_n X)_{s_1,\ldots ,s_{n-1}}\in\Cat
\end{equation*}
and more generally, if $1\leq j \leq n-1$,
\begin{equation*}
    X_{s_1,\ldots ,s_{j}}=(\Nu{j}\ldots \Nu{1} X)_{s_1,\ldots ,s_{j}}\in\cat{n-j}\;.
\end{equation*}
Let $ob : \Cat \clC \rw \clC$ be the object of object functor. The left adjoint to $ob$ is the discrete internal category functor $d$. By Proposition \ref{pro-assoc-iso}  we then have
\begin{equation*}
\xymatrix{
    \cat{n}\clC \oset{\xi_n}{\cong}\Cat(\cat{n-1}\clC) \ar@<1ex>[r]^(0.65){ob} & \cat{n-1}\clC \ar@<1ex>[l]^(0.35){d}\;.
}
\end{equation*}

We denote
\begin{equation*}
\di{n}=\xi^{-1}_{n}\circ d \text{\;\;for\;\;} n>1,\; \di{1}=d \;.
\end{equation*}
Thus $\di{n}$ is the discrete inclusion of $\cat{n-1}\clC$ into $\cat{n}\clC$ in the $n^{th}$ direction.

\bk

The following is a characterization of objects of $\funcat{n-1}{\Cat}$ in the image of the functor $J_n$ in \ref{not-ner-func-dirk}.
\begin{lemma}\label{lem-char-obj}
     Let $L\in \funcat{n-1}{\Cat}$ be such that, for all $\uk\in\dop{n-1}$, $1\leq i\leq n-1$ and $k_i\geq 2$, the Segal maps are isomorphisms:
     \begin{equation}\label{eq-lem-char-obj}
        L_{\uk} \cong\pro{L_{\uk(1,i)}}{L_{\uk(0,i)}}{k_i}\;.
     \end{equation}
     Then $L\in\cat{n}$.
\end{lemma}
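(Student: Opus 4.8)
The plan is to transport the problem to multisimplicial sets, where the characterization of multinerves in Lemma \ref{lem-multin-iff} a) is available, verify the Segal conditions there in all $n$ directions, and then descend back to $\funcat{n-1}{\Cat}$ using that the nerve is fully faithful. Concretely, I would first apply the levelwise nerve functor $\ovl{N}:\funcat{n-1}{\Cat}\rw\funcat{n}{\Set}$ of Definition \ref{def-fun-smacat}, induced by $N:\Cat\rw\funcat{}{\Set}$ over the indexing category $\dop{n-1}$ and using the identification $[\dop{n-1},\funcat{}{\Set}]\cong\funcat{n}{\Set}$. Thus $(\ovl{N}L)(\uk,j)=(N L_{\uk})_j$ for $\uk\in\dop{n-1}$ and $j$ the last simplicial coordinate, and the goal becomes to show that $\ovl{N}L$ is a multinerve, i.e.\ satisfies \eqref{eq-multin-iff} in each of the $n$ directions.

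Next I would check these Segal conditions by splitting the directions into the $n-1$ simplicial directions of $L$ and the internal $\Cat$-direction. For $1\leq i\leq n-1$ and $k_i\geq 2$, the hypothesis \eqref{eq-lem-char-obj} says the Segal map $L_{\uk}\rw\pro{L_{\uk(1,i)}}{L_{\uk(0,i)}}{k_i}$ is an isomorphism in $\Cat$. Since $N$ is a right adjoint (its left adjoint is the fundamental category functor) it preserves limits, in particular these pullbacks, so applying $N$ gives an isomorphism $N L_{\uk}\cong\pro{N L_{\uk(1,i)}}{N L_{\uk(0,i)}}{k_i}$; read off at each value of the last coordinate this is exactly \eqref{eq-multin-iff} for $\ovl{N}L$ in direction $i$, with naturality in the remaining free variables inherited from that of the Segal map. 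For the remaining direction $r=n$, each $L_{\uk}$ is an object of $\Cat$, so by Proposition \ref{pro-ner-int-cat} its nerve satisfies $(N L_{\uk})_p\cong\pro{(N L_{\uk})_1}{(N L_{\uk})_0}{p}$ for $p\geq2$, which is precisely \eqref{eq-multin-iff} in direction $n$ (here the free variables are empty). Hence $\ovl{N}L$ satisfies all the hypotheses of Lemma \ref{lem-multin-iff} a), and therefore $\ovl{N}L=\Nn X$ for some $X\in\cat{n}$.

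Finally I would descend back to $\funcat{n-1}{\Cat}$. By construction $\Nn=\ovl{N}\circ J_n$: taking the nerve in all $n$ directions is the same as applying $J_n=\Nu{n-1}\cdots\Nu{1}$ of Notation \ref{not-ner-func-dirk} (the nerve in the first $n-1$ directions) and then taking the levelwise nerve $\ovl{N}$ in the last direction. Therefore $\ovl{N}L=\Nn X=\ovl{N}(J_n X)$. Since $N:\Cat\rw\funcat{}{\Set}$ is fully faithful, the levelwise functor $\ovl{N}$ is fully faithful as well, so $L\cong J_n X$; under the identification $\cat{n}=J_n(\cat{n})$ this is exactly the assertion $L\in\cat{n}$.

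I expect the main obstacle to be bookkeeping rather than conceptual: keeping the $n$ simplicial directions straight through the passage $L\mapsto\ovl{N}L$, confirming the factorization $\Nn=\ovl{N}\circ J_n$, and—most delicately—checking that the isomorphisms produced by applying $N$ are the \emph{canonical} Segal maps of $\ovl{N}L$ (so that Lemma \ref{lem-multin-iff} a) truly applies) and not merely abstract isomorphisms. The two categorical inputs carrying the argument, namely that $N$ preserves pullbacks and is fully faithful, are standard.
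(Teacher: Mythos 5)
Your proof is correct, but it takes a genuinely different route from the paper's. The paper proves Lemma \ref{lem-char-obj} by a direct induction on $n$ entirely within $\funcat{n-1}{\Cat}$: it first uses the Segal conditions in directions $2\leq i\leq n-1$ to show that each $L_j\in\funcat{n-2}{\Cat}$ satisfies the inductive hypothesis, hence $L_j\in\cat{n-1}$; it then reads the direction-$1$ condition as isomorphisms $L_{k_1}\cong\pro{L_1}{L_0}{k_1}$ in $\cat{n-1}$ and concludes by Proposition \ref{pro-ner-int-cat}. You instead transport the problem to $\funcat{n}{\Set}$ along the levelwise nerve $\ovl{N}$, verify the Segal conditions in all $n$ directions there (the first $n-1$ from the hypothesis, since $N$ preserves pullbacks and carries limit cones to limit cones, so canonical Segal maps go to canonical Segal maps; the $n$-th automatically, since each $N L_{\uk}$ is the nerve of a category), invoke the multinerve characterization of Lemma \ref{lem-multin-iff} a), and descend using full faithfulness of $\ovl{N}$ together with the factorization $\Nn=\ovl{N}\circ J_n$. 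This is legitimate and non-circular, since Lemma \ref{lem-multin-iff} precedes the statement in the paper; in effect you outsource the induction to that lemma, whose own proof is the same kind of direction-peeling induction that the paper runs directly here. What each approach buys: yours is shorter at the point of use and cleanly isolates the two categorical inputs (the nerve preserves the relevant pullbacks; the nerve is fully faithful and so reflects isomorphisms), at the cost of exactly the bookkeeping you flag — the identification $[\dop{n-1},\funcat{}{\Set}]\cong\funcat{n}{\Set}$, the factorization $\Nn=\ovl{N}\circ J_n$ (which does hold by Definition \ref{def-ner-func-dirk}, as $\Nb{n}=\Nu{n}\cdots\Nu{1}$ and $J_n=\Nu{n-1}\cdots\Nu{1}$), and the canonicity of the transported Segal maps, which you correctly identify as the delicate point. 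The paper's induction avoids all transport and stays in $\Cat$-valued diagrams. One harmless remark applying to both versions: the conclusion is membership up to isomorphism, $L\cong J_n X$, which is exactly how the paper itself treats $L\in\cat{n}$ via the identification of $\cat{n}$ with the image of the fully faithful $J_n$.
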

\begin{proof}
By induction on $n$. When $n=2$, $L\in\funcat{}{\Cat}$, $k\in \Dop$, $i=1$, $\uk(1,i)=1$, $\uk(0,i)=0$, $k_i=k=2$ and
\begin{equation*}
    L_k\cong\pro{L_1}{L_0}{k}\;.
\end{equation*}
Thus by Proposition \ref{pro-ner-int-cat}, $L\in\cat{2}$.

Suppose the lemma holds for $(n-1)$ and let $L \in \funcat{n-1}{\Cat}$ be as in the hypothesis. Consider $L_j\in\funcat{n-2}{\Cat}$ for $j\geq 0$. Let $\ur\in\dop{n-2}$ and denote $\uk=(j,\ur)\in\dop{n-1}$. Then, for any $2\leq i\leq n-1$, $k_i=r_{i-1}$ and
\begin{equation*}
    L_{\uk}=(L_j)_{\ur},\quad L_{\uk(1,i)}=(L_j)_{\ur(1,i-1)},\quad L_{\uk(0,i)}=(L_j)_{\ur(0,i-1)}\;.
\end{equation*}
Therefore \eqref{eq-lem-char-obj} implies
\begin{equation*}
    (L_j)_{\ur}=\pro{(L_j)_{\ur(1,i-1)}}{(L_j)_{\ur(0,i-1)}}{r_{i-1}}\;.
\end{equation*}
This means that $L_j$ satisfies the inductive hypothesis and therefore $L_j\in\cat{n-1}$.
Taking $i=1$ in \eqref{eq-lem-char-obj} we see that, for each $k_1\geq 2$ and $\ur=(k_2,\ldots,k_{n-1})\in\dop{n-2}$,
\begin{equation*}
    (L_{k_1})_{\ur}=\pro{(L_1)_{\ur}}{(L_0)_{\ur}}{k_1}\;.
\end{equation*}
That is, we have isomorphisms in $\cat{n-1}$
\begin{equation*}
    L_{k_1}\cong \pro{L_1}{L_0}{k_1}\;.
\end{equation*}
We conclude from Proposition \ref{pro-ner-int-cat} that $L\in\cat{n}$.

\end{proof}
\begin{lemma}\label{lem-char-obj-II}
    Let $P$ be the pullback in $\funcat{n-1}{\Cat}$ of the diagram in $A\rw C \lw B$. Suppose that for each $k\geq 2$ there are isomorphisms of Segal maps in $\funcat{n-2}{\Cat}$
    \begin{equation*}
        A_k\cong \pro{A_1}{A_0}{k},\;\;  C_k\cong \pro{C_1}{C_0}{k},\;\;  B_k\cong \pro{B_1}{B_0}{k}\;.
    \end{equation*}
    Then $P_k\cong \pro{P_1}{P_0}{k}$.
\end{lemma}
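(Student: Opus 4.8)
The plan is to exploit two standard facts: that pullbacks in the functor category $\funcat{n-1}{\Cat}$ are computed pointwise, and that limits commute with limits, so that a fibre product of iterated fibre products agrees with an iterated fibre product of fibre products.

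First I would note that, since $\funcat{n-2}{\Cat}$ is complete, the pullback $P=A\times_C B$ is formed levelwise in the first simplicial direction; hence for every $k\geq 0$ one has $P_k\cong A_k\times_{C_k}B_k$, and in particular $P_1\cong A_1\times_{C_1}B_1$ and $P_0\cong A_0\times_{C_0}B_0$. I would then invoke the naturality of the Segal maps of Definition \ref{def-seg-map}: by hypothesis the $k$-th Segal maps of $A$, $B$ and $C$ are isomorphisms, and since the Segal map is natural in the simplicial object, the legs $A\rw C\lw B$ fit into commuting squares that identify the cospan $(A_k\rw C_k\lw B_k)$ with the cospan $(\pro{A_1}{A_0}{k}\rw\pro{C_1}{C_0}{k}\lw\pro{B_1}{B_0}{k})$. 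Since pullbacks preserve isomorphisms of cospans, this gives
\begin{equation*}
    P_k\cong\pro{A_1}{A_0}{k}\;\uset{\pro{C_1}{C_0}{k}}{\times}\;\pro{B_1}{B_0}{k}\;.
\end{equation*}

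The core step is then the interchange of limits, which is hypothesis-free. Both the iterated fibre product $\pro{(-)_1}{(-)_0}{k}$ and the pullback are finite limits in $\funcat{n-2}{\Cat}$, indexed respectively by a zigzag category $\mathcal{I}_k$ and the cospan category $\mathcal{J}$. Assembling the objects $A_0,A_1,B_0,B_1,C_0,C_1$ together with their face maps $d_0,d_1$ and the pullback legs into a single diagram $F:\mathcal{I}_k\times\mathcal{J}\rw\funcat{n-2}{\Cat}$, the Fubini theorem for limits supplies a canonical isomorphism between its two iterated limits: evaluating $\mathcal{J}$ first yields $\pro{P_1}{P_0}{k}$, while evaluating $\mathcal{I}_k$ first yields the right-hand side of the previous display. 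Combining the two isomorphisms gives $P_k\cong\pro{P_1}{P_0}{k}$, and by the uniqueness clause in the universal property of Definition \ref{def-seg-map} this isomorphism is precisely the $k$-th Segal map of $P$.

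The hard part will be purely organizational: setting up the product index category $\mathcal{I}_k\times\mathcal{J}$ and verifying that the two orders of forming the limit produce exactly the fibre products claimed, with the connecting maps correctly matched so that the interchange isomorphism is compatible with the Segal projections $\nu_j$. Beyond this bookkeeping there is no genuine obstacle, since everything reduces to the commutativity of finite limits in the complete category $\funcat{n-2}{\Cat}$.
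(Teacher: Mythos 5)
Your proposal is correct, and it proves the lemma by the same underlying principle as the paper --- the interchange of fibre products, i.e.\ commutation of finite limits --- but the execution is genuinely different. The paper first reduces everything to $\Set$: it applies the nerve $N:\Cat\rw\funcat{}{\Set}$ levelwise (using that $N$, being a right adjoint, preserves pullbacks, and that pullbacks in $\funcat{n-1}{\Cat}$ are computed pointwise), substitutes the Segal isomorphisms for $A$, $B$, $C$ into the formula $(NP)_{2\us}=(NA)_{2\us}\tiund{(NC)_{2\us}}(NB)_{2\us}$, and then exhibits the interchange isomorphism concretely on elements, as the map sending $(x_1,x_2,x_3,x_4)$ to $(x_1,x_3,x_2,x_4)$ (for $k=2$, with $k>2$ ``similar''). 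You instead stay inside $\funcat{n-2}{\Cat}$ and cite the Fubini theorem for limits over the product index category $\mathcal{I}_k\times\mathcal{J}$, after first using naturality of the Segal maps to replace the cospan $(A_k\rw C_k\lw B_k)$ by the cospan of iterated fibre products --- a step the paper performs implicitly when it substitutes the Segal isomorphisms inside the pullback. Your route is more general and uniform in $k$ (no ``the case $k>2$ is similar''), works in any complete category without invoking the nerve, and, via the uniqueness clause of Definition \ref{def-seg-map}, identifies the resulting isomorphism as the actual Segal map of $P$, which the paper leaves tacit; what the paper's element-level computation buys is concreteness and the avoidance of the index-category bookkeeping you correctly flag as the remaining organizational work (checking that $F:\mathcal{I}_k\times\mathcal{J}\rw\funcat{n-2}{\Cat}$ is a functor --- which holds because the legs $A\rw C\lw B$ are simplicial maps --- and that the interchange isomorphism is compatible with the projections $\nu_j$). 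There is no gap in your argument.
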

\begin{proof}
We show this for $k=2$, the case $k>2$ being similar. Since the nerve functor $N:\Cat\rw\funcat{}{\Set}$ commutes with pullbacks (as it is right adjoint) and pullbacks in $\funcat{n-1}{\Cat}$ are computed pointwise, for each $\us\in\dop{n-1}$ we have a pullback in $\Set$
\begin{equation*}
    \xymatrix{
    (NP)_{2\us} \ar[r] \ar[d] & (NA)_{2\us} \ar[d]\\
    (NC)_{2\us} \ar[r] & (NC)_{2\us}
    }
\end{equation*}
where
\begin{equation*}
    (NA)_{2\us}=\tens{(NA)_{1\us}}{(NA)_{0\us}}
\end{equation*}
and similarly for $NC$ and $NB$. We then calculate
\begin{align*}
    &(NP)_{2\us}=(NA)_{2\us} \tiund{(NC)_{2\us}} (NB)_{2\us} =\\
    & \resizebox{1.0\hsize}{!}{$
    =\{ \tens{(NA)_{1\us}}{(NA)_{0\us}} \}\tiund{\tens{(NC)_{1\us}}{(NC)_{0\us}}} \{ \tens{(NB)_{1\us}}{(NB)_{0\us}}\}\cong $}\\
    & \resizebox{1.0\hsize}{!}{$
    \cong\{ \tens{(NB)_{1\us}}{(NC)_{0\us}} \}\tiund{\tens{(NA)_{1\us}}{(NC)_{0\us}}} \{ \tens{(NB)_{1\us}}{(NC)_{0\us}}\}= $}\\
    &= \tens{(NP)_{1\us}}{(NP)_{0\us}}\;.
\end{align*}
In the above, the isomorphism before the last takes $(x_1,x_2,x_3,x_4)$ to $(x_1,x_3,x_2,x_4)$. Since this holds for all $\us$, it follows that
\begin{equation*}
    P_2\cong \tens{P_1}{P_0}\;.
\end{equation*}
The case $k>2$ is similar.

\end{proof}

%%%%%%%%%%%%%%%%%%%%%%%%%%%%%%%%%%%%%%%%%%%%%%%%%%%%%%%%%%%%%%%%%%%%%%%
\subsection{Some functors on $ \Cat$}\label{sbs-funct-cat}

The connected component functor
\begin{equation*}
    q: \Cat\rw \Set
\end{equation*}
associates to a category its set of paths components. This is left adjoint to the discrete category functor
\begin{equation*}
    \di{1}:\Set \rw \Cat
\end{equation*}
associating to a set $X$ the discrete category on that set. We denote by
\begin{equation*}
    \zgu{1}:\Id\Rw \di{1}q
\end{equation*}
the unit of the adjunction $q\dashv \di{1}$.
\begin{lemma}\label{lem-q-pres-fib-pro}
    $q$ preserves fiber products over discrete objects and sends
    equivalences of categories to isomorphisms.
\end{lemma}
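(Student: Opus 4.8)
The plan is to treat the two assertions separately, beginning with the statement that $q$ sends equivalences to isomorphisms, which is the shorter of the two. The key observation I would isolate first is that $q$ identifies functors related by a natural transformation: if $H,K:\clA\rw\clB$ are functors in $\Cat$ and $\eta:H\Rw K$ is any natural transformation, then for each object $a$ the component $\eta_a:Ha\rw Ka$ is a morphism of $\clB$, so $Ha$ and $Ka$ lie in the same connected component and hence $q(H)=q(K)$ as maps $q\clA\rw q\clB$. Granting this, if $F:\clA\rw\clB$ is an equivalence with pseudo-inverse $G$ and natural isomorphisms $GF\cong\Id_{\clA}$, $FG\cong\Id_{\clB}$, then applying $q$ and using functoriality gives $q(G)q(F)=q(GF)=\Id_{q\clA}$ and $q(F)q(G)=q(FG)=\Id_{q\clB}$, so $q(F)$ is a bijection.

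For the preservation of fiber products over discrete objects, I would work with the explicit pullback $P=A\times_C B$ in $\Cat$, where $C$ is discrete, together with the description of $q$ as objects modulo the equivalence relation generated by the existence of a morphism (a zigzag). Two preliminary remarks drive the whole argument: since $C$ is discrete, $q(C)$ is just its underlying set of objects, and any functor into $C$ is constant on connected components, hence descends to a map on $q$. In particular an object $(a,b)\in P$ is a pair with $f(a)=g(b)$ in $C$, and a morphism of $P$ is a pair $(\alpha,\beta)$ of morphisms of $A$ and $B$, the compatibility condition in $C$ being automatic because both images are forced to be identities. I would then define the comparison map
\[
    \phi:q(P)\rw q(A)\times_{q(C)}q(B),\qquad [(a,b)]\mapsto([a],[b]),
\]
check it is well defined (a zigzag in $P$ projects to zigzags in $A$ and in $B$, and $f(a)=g(b)$ guarantees the target lands in the fiber product), and verify surjectivity, which is immediate: a compatible pair $([a],[b])$ satisfies $f(a)=g(b)$, so $(a,b)$ is an object of $P$ mapping to it.

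The main obstacle is injectivity of $\phi$, and this is exactly the step that uses discreteness of $C$. Suppose $[a]=[a']$ in $q(A)$ and $[b]=[b']$ in $q(B)$. I would lift the two zigzags to $P$ one coordinate at a time: a zigzag $a\rightsquigarrow a'$ in $A$ stays inside a single component, on which $f$ is constant equal to $g(b)$, so each step $\alpha:a_i\rw a_{i+1}$ lifts to $(\alpha,\Id_b):(a_i,b)\rw(a_{i+1},b)$ in $P$ --- every intermediate $(a_i,b)$ is a legitimate object of $P$ precisely because $f(a_i)=f(a)=g(b)$. This shows $(a,b)\sim(a',b)$, and symmetrically lifting a zigzag $b\rightsquigarrow b'$ with the first coordinate held at $a'$ (using $g(b)=f(a)=f(a')$) gives $(a',b)\sim(a',b')$, whence $[(a,b)]=[(a',b')]$. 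It is here that discreteness is indispensable: keeping one coordinate fixed while moving the other requires the matching condition in $C$ to impose nothing, which holds only because all morphisms of $C$ are identities; for a non-discrete base the two coordinates would be forced to move together and $\phi$ would in general fail to be injective. Assembling the three verifications shows $\phi$ is a bijection, i.e. $q(P)\cong q(A)\times_{q(C)}q(B)$, completing the proof.
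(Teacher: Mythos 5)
Your proof is correct, but it takes a genuinely different route from the paper's. The paper factors the fiber-product statement through two structural facts: it first proves by hand that $q$ preserves binary products, then observes that a pullback over a discrete base decomposes as a coproduct of products, $\clC\tiund{\clE}\clD=\coprod_{x\in\clE}\clC_x\times\clD_x$ (where $\clC_x$, $\clD_x$ are the full subcategories sitting over $x$), and finally invokes preservation of coproducts, which comes for free because $q$ is a left adjoint to the discrete-category functor. You instead verify directly that the comparison map $\phi: q(P)\rw q(A)\tiund{q(C)}q(B)$ is a bijection on the pullback itself. The essential combinatorial content is the same in both: the one-coordinate-at-a-time lifting of zigzags appears in the paper inside the injectivity half of the product-preservation claim, while in your argument it is the injectivity of $\phi$, and you correctly pinpoint discreteness of the base as what makes the fixed-coordinate lifts legitimate (in the paper the role of discreteness is instead concentrated in the coproduct decomposition, and in the surjectivity step you also implicitly use that components of a discrete category are singletons, which is fine). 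Your route is more elementary and self-contained, at the cost of redoing by hand what the paper extracts from general principles; the paper's is more modular, reusing the adjunction $q\dashv \di{1}$. On the equivalences part your argument is if anything sharper than the paper's: you isolate the observation that $q$ identifies naturally isomorphic functors and deduce that $q(F)$ itself is a bijection with inverse $q(G)$, whereas the paper concludes only tersely from $qF\,qG\,\clC\cong q\clC$ and $qF\,qG\,\clD\cong q\clD$ that $q\clC$ and $q\clD$ are isomorphic.
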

\begin{proof}
We claim that $q$ preserves products; that is, given categories
$\clC$ and $\clD$, there is a bijection
\begin{equation*}
    q(\clC\times \clD)=q(\clC)\times q(\clD)\;.
\end{equation*}
In fact, given $(c,d)\in q(\clC\times \clD)$ the map
$q(\clC\times\clD)\rw q(\clC)\times q(\clD)$ given by
$[(c,d)]=([c],[d])$ is well defined and is clearly surjective. On
the other hand, this map is also injective: given $[(c,d)]$ and
$[(c',d')]$ with $[c]=[c']$ and $[d]=[d']$, we have paths in $\clC$
\newcommand{\lin}{-\!\!\!-\!\!\!-}
\begin{equation*}
\xymatrix @R5pt{c \hspace{2mm} \lin \hspace{2mm}\cdots \hspace{2mm}
\lin
\hspace{2mm}c'\\
d \hspace{2mm} \lin \hspace{2mm}\cdots \hspace{2mm} \lin
\hspace{2mm}d' }
\end{equation*}
and hence a path in $\clC\times \clD$
\begin{equation*}
(c,d)\hspace{2mm}\lin\hspace{2mm}\cdots\hspace{2mm}\lin\hspace{2mm}(c',d)
\hspace{2mm}\lin\hspace{2mm}\cdots\hspace{2mm}\lin\hspace{2mm}(c',d')\;.
\end{equation*}
Thus $[(c,d)]=[(c',d')]$ and so the map is also injective, hence it
is a bijection, as claimed.

Given a diagram in $\Cat$ $\xymatrix{\clC\ar_{f}[r] & \clE & \clD
\ar^{g}[l]}$ with $\clE$ discrete, we have
\begin{equation}\label{eq-q-pres-fib-pro}
    \clC\tiund{\clE}\clD=\underset{x\in\clE}{\coprod}\clC_x\times
    \clD_x
\end{equation}
where $\clC_x,\;\clD_x$ are the full subcategories of $\clC$ and
$\clD$ with objects $c,\;d$ such that \;$f(c)=x=g(d)$. Since $q$ preserves
products and (being left adjoint) coproducts, we conclude by
\eqref{eq-q-pres-fib-pro} that
\begin{equation*}
    q(\clC\tiund{\clE}\clD)\cong q(\clC)\tiund{\clE}\,q(\clD)\;.
\end{equation*}
Finally, if $F:\clC\simeq \clD:G$ is an equivalence of categories,
$FG\,\clC\cong\clC$ and $FG\,\clD\cong \clD$ which implies that
$qF\,qG\,\clC\cong q\clC$ and $qF\,qG\,\clD\cong q\clD$, so $q\clC$
and $q\clD$ are isomorphic.
\end{proof}
The isomorphism classes of objects functot
\begin{equation*}
    p:\Cat\rw\Set
\end{equation*}
associates to a category the set of isomorphism classes of its objects. Notice that if $\clC$ is a groupoid, $p\clC=q\clC$.
\subsection{Pseudo-functors and their strictification}\label{sbs-pseudo-functors}
We recall the classical theory of strictification of pseudo-functors, see \cite{PW}, \cite{Lack}.

The functor 2-category $\funcat{n}{\Cat}$ is 2-monadic over $[ob(\Dnop),\Cat]$ where $ob(\Dnop)$ is the set of objects of $\Dnop$. Let
\begin{equation*}
    U:\funcat{n}{\Cat}\rw [ob(\Dnop),\Cat]
\end{equation*}
be the forgetful functor $(UX)_{\uk}=X_{\uk}$. Its left adjoint $F$ is given on objects by
\begin{equation*}
    (FH)_{\uk}=\underset{\ur\in ob(\Dnop)}{\coprod}\Dnop(\ur,\uk)\times H_{\ur}
\end{equation*}
for $H\in [ob(\Dmenop),\Cat]$, $\uk\in ob(\Dmenop)$. If $T$ is the monad corresponding to the adjunction $F\dashv U$, then
\begin{equation*}
    (TH)_{\uk}=\underset{\ur\in ob(\Dnop)}{\coprod}\Dnop(\ur,\uk)\times H_{\ur}
\end{equation*}
A pseudo $T$-algebra is given by $H\in [ob(\Dnop),\Cat]$,
\begin{equation*}
    h_{n}: \underset{\ur\in ob(\Dnop)}{\coprod}\Dnop(\ur,\uk)\times H_{\ur} \rw H_{\uk}
\end{equation*}
and additional data, as described in \cite{PW}. This amounts precisely to functors from $\Dnop$ to $\Cat$ and the 2-category $\sf{Ps\mi T\mi alg}$ of pseudo $T$-algebras corresponds to the 2-category $\Ps\funcat{n}{\Cat}$ of pseudo-functors, pseudo-natural transformations and modifications.

The strictification result proved in \cite{PW} yields that every pseudo-functor from $\Dnop$ to $\Cat$ is equivalent, in $\Ps\funcat{n}{\Cat}$, to a 2-functor.

Given a pseudo $T$-algebra as above, \cite{PW} consider the factorization  of $h:TH\rw H$ as
\begin{equation*}
    TH\xrw{v}L\xrw{g}H
\end{equation*}
with $v_{\uk}$ bijective on objects and $g_{\uk}$ fully faithful, for each $\uk\in\Dnop$. It is shown in \cite{PW} that it is possible to give a strict $T$-algebra structure $TL\rw L$ such that $(g,Tg)$ is an equivalence of pseudo $T$-algebras. It is immediate to see that, for each $\uk\in\Dnop$, $g_{\uk}$ is an equivalence of categories.

Further, it is shown in \cite{Lack} that $\St:\psc{n}{\Cat}\rw\funcat{n}{\Cat}$ as described above is left adjoint to the inclusion
\begin{equation*}
  J:\funcat{n}{\Cat}\rw\psc{n}{\Cat}
\end{equation*}
 and that the components of the units are equivalences in $\psc{n}{\Cat}$.
\subsection{Transport of structure}\label{transport-structure} We now recall a general categorical technique, known as transport of structure along an adjunction, with one of its applications. This will be used crucially in the proof of Theorem \ref{the-XXXX}.
\begin{theorem}\rm{\cite[Theorem 6.1]{lk}}\em\label{s2.the1}
    Given an equivalence $\;\eta,\;\zve : f \dashv f^* : A\rw B$ in the complete and locally small
    2-category $\clA$, and an algebra $(A,a)$ for the monad $T=(T,i,m)$ on $\clA$, the
    equivalence enriches to an equivalence
\begin{equation*}
  \eta,\zve:(f,\ovll{f})\vdash (f^*,\ovll{f^*}):(A,a)\rw(B,b,\hat{b},\ovl{b})
\end{equation*}
in $\PsTalg$, where $\hat{b}=\eta$, $\;\ovl{b}=f^* a\cdot T\zve \cdot
Ta\cdot T^2 f$, $\;\ovll{f}=\zve^{-1} a\cdot Tf$, $\;\ovll{f^*}=f^* a\cdot
T\zve$.
\end{theorem}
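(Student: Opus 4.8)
The plan is to take the data displayed in the statement as a \emph{definition} of the transported structure on $B$ and verify directly that it satisfies the axioms of a pseudo $T$-algebra and of an equivalence in $\PsTalg$. The whole argument is an instance of the general transport-of-structure principle, so the content is essentially bookkeeping: because $(A,a)$ is a \emph{strict} algebra and $\eta,\zve$ are invertible (the adjunction $f\dashv f^*$ being an adjoint equivalence), every coherence 2-cell I must check will collapse, via the triangle identities $\zve f\cdot f\eta=\Id_f$ and $f^*\zve\cdot\eta f^*=\Id_{f^*}$ together with 2-naturality of $i$ and $m$, to something manifestly true.

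First I would pin down the underlying $1$-cell and the two constraints. Set $b=f^*\cdot a\cdot Tf:TB\rw B$. For the unit constraint, I compute $b\cdot i_B=f^*a\cdot Tf\cdot i_B=f^*a\cdot i_A\cdot f=f^*\cdot(a\,i_A)\cdot f=f^*f$, using $2$-naturality of $i$ and the strict unit law $a\,i_A=\Id_A$; hence $\hat b=\eta:\Id_B\Rw f^*f=b\,i_B$ has exactly the required type. For the associativity constraint I would check that $\ovl b=f^*a\cdot T\zve\cdot Ta\cdot T^2f$ is a $2$-cell $b\cdot Tb\Rw b\cdot m_B$: expanding $b\cdot Tb=f^*a\cdot T(ff^*)\cdot Ta\cdot T^2f$, the cell $T\zve:T(ff^*)\Rw\Id_{TA}$ whiskered by $f^*a$ on the left and by $Ta\cdot T^2f$ on the right lands at $f^*a\cdot Ta\cdot T^2f$, and the strict associativity law $a\,Ta=a\,m_A$ together with $2$-naturality $m_A\cdot T^2f=Tf\cdot m_B$ identifies this target with $b\cdot m_B$. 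Thus the only genuine $2$-cell content of $\ovl b$ is $T\zve$; everything else is an equality forced by strictness of $(A,a)$.

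Next I would verify the pseudo-algebra coherence axioms — the two unit triangles relating $\hat b$ to $\ovl b$, and the pentagon relating $\ovl b$, $T\ovl b$ and $m$. Each is a pasting equality of $2$-cells in $\clA$; after cancelling the identity cells coming from the strict laws, it reduces to an instance of a triangle identity applied under $T$ or whiskered by $a$, $m$. I would then treat the morphisms: one checks that $\ovll f=\zve^{-1}a\cdot Tf$ is a $2$-cell $a\cdot Tf\Rw f\cdot b$ and that $\ovll{f^*}=f^*a\cdot T\zve$ is a $2$-cell $b\cdot Tf^*\Rw f^*\cdot a$, both of the correct type, and that each satisfies the two pseudo-morphism coherence axioms; again these collapse to triangle identities. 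Finally I would confirm that $\eta$ and $\zve$ are algebra $2$-cells, i.e.\ compatible with $\ovll f$ and $\ovll{f^*}$, which upgrades $f\dashv f^*$ to an equivalence in $\PsTalg$.

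The main obstacle is purely organizational: laying out the several pasting diagrams for the pentagon, the unit triangles and the two morphism axioms, and whiskering the triangle identities by $T$, $T^2$, $a$ and $m$ so that the two sides visibly coincide. There is no conceptual difficulty once strictness of $(A,a)$ is exploited to turn the algebra laws into genuine equalities; the only point requiring care is the orientation of the invertible cells, in particular the appearance of $\zve^{-1}$ (rather than $\zve$) in $\ovll f$, which is dictated by the direction $a\cdot Tf\Rw f\cdot b$ of the morphism constraint.
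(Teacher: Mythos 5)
There is nothing in the paper to compare against here: Theorem \ref{s2.the1} is imported verbatim from Kelly--Lack \cite[Theorem 6.1]{lk} and the paper offers no proof of it (it is used as a black box in the discussion following it and in Lemma \ref{lem-PP}). So the only question is whether your blind verification is sound, and it is: it is essentially the standard direct proof of the cited result. Your type-checks are exactly right --- with $f:B\rw A$, $f^*:A\rw B$, $\eta:\Id_B\Rw f^*f$, $\zve:ff^*\Rw \Id_A$, one has $b\,i_B=f^*a\,Tf\,i_B=f^*a\,i_A f=f^*f$ so $\hat b=\eta$ has the right type, and
\begin{equation*}
  b\cdot Tb=f^*a\cdot T(ff^*)\cdot Ta\cdot T^2f \;\xRightarrow{\;f^*a\cdot T\zve\cdot Ta\cdot T^2f\;}\; f^*a\cdot Ta\cdot T^2f=f^*a\cdot m_A\cdot T^2f=b\cdot m_B,
\end{equation*}
so $\ovl b$ is correctly oriented, as are $\ovll{f}:a\cdot Tf\Rw f\cdot b$ (whence the necessity of $\zve^{-1}$) and $\ovll{f^*}:b\cdot Tf^*\Rw f^*\cdot a$. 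Your reduction of the pseudo-algebra coherence axioms and the two pseudo-morphism axioms to whiskered triangle identities, exploiting that $(A,a)$ is a strict algebra and $i$, $m$ are $2$-natural, is precisely how the verification goes. Two small points worth making explicit if you write this out: once $\eta$ and $\zve$ are checked to be algebra $2$-cells, the triangle identities in $\PsTalg$ are automatic since $2$-cells there compose as in $\clA$, so you get an adjoint equivalence with no further work; and the completeness and local smallness hypotheses on $\clA$ play no role in this particular statement (they belong to Kelly--Lack's standing assumptions), so your proof legitimately never invokes them.
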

Let $\eta',\zve':f'\vdash f'^{*}:A'\rw B'$ be another equivalence in $\clA$ and
let $(B',b',\hat{b'},\ovl{b'})$ be the corresponding pseudo-$T$-algebra as in
Theorem \ref{s2.the1}. Suppose $g:(A,a)\rw(A',a')$ is a morphism in $\clA$ and
$\gamma$ is an invertible 2-cell in $\clA$
\begin{equation*}
  \xy
    0;/r.10pc/:
    (-20,20)*+{B}="1";
    (-20,-20)*+{B'}="2";
    (20,20)*+{A}="3";
    (20,-20)*+{A'}="4";
    {\ar_{f^*}"3";"1"};
    {\ar_{h}"1";"2"};
    {\ar^{f'^*}"4";"2"};
    {\ar^{g}"3";"4"};
    {\ar@{=>}^{\gamma}(0,3);(0,-3)};
\endxy
\end{equation*}
Let $\ovl{\gamma}$ be the invertible 2-cell given by the following pasting:
\begin{equation*}
    \xy
    0;/r.15pc/:
    (-40,40)*+{TB}="1";
    (40,40)*+{TB'}="2";
    (-40,-40)*+{B}="3";
    (40,-40)*+{B'}="4";
    (-20,20)*+{TA}="5";
    (20,20)*+{TA'}="6";
    (-20,-20)*+{A}="7";
    (20,-20)*+{A'}="8";
    {\ar^{Th}"1";"2"};
    {\ar_{b}"1";"3"};
    {\ar^{b'}"2";"4"};
    {\ar_{h}"3";"4"};
    {\ar_{Tg}"5";"6"};
    {\ar^{}"5";"7"};
    {\ar_{}"6";"8"};
    {\ar^{g}"7";"8"};
    {\ar_{Tf^*}"5";"1"};
    {\ar^{f^*}"7";"3"};
    {\ar^{Tf'^*}"6";"2"};
    {\ar^{f'^*}"8";"4"};
    {\ar@{=>}^{(T\gamma)^{-1}}(0,33);(0,27)};
    {\ar@{=>}^{\gamma}(0,-27);(0,-33)};
    {\ar@{=>}^{\ovll{f'^*}}(30,3);(30,-3)};
    {\ar@{=>}^{\ovll{f^*}}(-30,3);(-30,-3)};
\endxy
\end{equation*}
Then it is not difficult to show that
$(h,\ovl{\gamma}):(B,b,\hat{b},\ovl{b})\rw(B',b',\hat{b'},\ovl{b'})$ is a
pseudo-$T$-algebra morphism.

The following fact is essentially known and, as sketched in the proof below, it is an instance of Theorem \ref{s2.the1}
\begin{lemma}\cite{PP}\label{lem-PP}
     Let $\clC$ be a small 2-category, $F,F':\clC\rw\Cat$ be 2-functors, $\alpha:F\rw F'$
    a 2-natural transformation. Suppose that, for all objects $C$ of $\clC$, the
    following conditions hold:
\begin{itemize}
  \item [i)] $G(C),\;G'(C)$ are objects of $\Cat$ and there are adjoint equivalences of
  categories $\mu_C\vdash\eta_C$, $\mu'_C\vdash\eta'_C$,
\begin{equation*}
  \mu_C:G(C)\;\rightleftarrows\;F(C):\eta_C\qquad\qquad
  \mu'_C:G'(C)\;\rightleftarrows\;F'(C):\eta'_C,
\end{equation*}
  \item [ii)] there are functors $\beta_C:G(C)\rw G'(C),$
  \item [iii)] there is an invertible 2-cell
\begin{equation*}
  \gamma_C:\beta_C\,\eta_C\Rightarrow\eta'_C\,\alpha_C.
\end{equation*}
\end{itemize}
Then
\begin{itemize}
  \item [a)] There exists a pseudo-functor $G:\clC\rw\Cat$ given on objects by $G(C)$,
  and pseudo-natural transformations $\eta:F\rw G$, $\mu:G\rw F$ with
  $\eta(C)=\eta_C$, $\mu(C)=\mu_C$; these are part of an adjoint equivalence
  $\mu\vdash\eta$ in the 2-category $\Ps[\clC,\Cat]$.
  \item [b)] There is a pseudo-natural transformation $\beta:G\rw G'$ with
  $\beta(C)=\beta_C$ and an invertible 2-cell in $\Ps[\clC,\Cat]$,
  $\gamma:\beta\eta\Rightarrow\eta\alpha$ with $\gamma(C)=\gamma_C$.
\end{itemize}
\end{lemma}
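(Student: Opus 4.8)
The plan is to realize the whole statement as a single application of the transport-of-structure theorem (Theorem \ref{s2.the1}), carried out in the 2-monadic setting recalled in Section \ref{sbs-pseudo-functors}. As explained there (with $\Dnop$ replaced by the arbitrary small 2-category $\clC$), the 2-category $[\clC,\Cat]$ of 2-functors is 2-monadic over $[ob(\clC),\Cat]$ via a monad $T$, and the pseudo-$T$-algebras are precisely the pseudo-functors, so that $\PsTalg$ is identified with $\Ps[\clC,\Cat]$, while the strict $T$-algebras are the 2-functors. Write $U:[\clC,\Cat]\rw[ob(\clC),\Cat]$ for the forgetful functor. The 2-category $\clA=[ob(\clC),\Cat]$ is a product of copies of $\Cat$, hence complete and locally small, and every adjunction, equivalence and 2-cell in it is computed pointwise; this is exactly what lets us feed the pointwise hypotheses i)--iii) into the theorem.

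For part a) I would first view $F$ as a strict $T$-algebra $(UF,a)$, where $a:T(UF)\rw UF$ encodes the action of $F$ on the $1$- and $2$-cells of $\clC$. Since each $\mu_C\vdash\eta_C$ is an adjoint equivalence in $\Cat$ and $\clA$ is pointwise, the families $\mu=(\mu_C)$ and $\eta=(\eta_C)$ assemble into an adjoint equivalence $\mu\vdash\eta:UG\rightleftarrows UF$ in $\clA$, where $UG\in\clA$ denotes the object $C\mapsto G(C)$. Applying Theorem \ref{s2.the1} to this equivalence and the algebra $(UF,a)$ transports the $T$-algebra structure to a pseudo-$T$-algebra structure $(UG,b,\hat b,\ovl b)$ on $UG$, given by the explicit formulas of the theorem, and enriches the equivalence to an adjoint equivalence $\mu\vdash\eta$ in $\PsTalg$. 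Under the identification $\PsTalg\simeq\Ps[\clC,\Cat]$ this yields a pseudo-functor $G:\clC\rw\Cat$ which is $G(C)$ on objects, together with pseudo-natural transformations $\eta:F\rw G$ and $\mu:G\rw F$ with $\eta(C)=\eta_C$, $\mu(C)=\mu_C$, forming an adjoint equivalence. This is exactly a).

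For part b) I would run the same transport on $F'$ to obtain the pseudo-$T$-algebra $(UG',b',\hat{b'},\ovl{b'})$, i.e. the pseudo-functor $G'$. Then I would invoke the construction described immediately after Theorem \ref{s2.the1}: taking $g=\alpha:(UF,a)\rw(UF',a')$ (a strict $T$-algebra morphism, since $\alpha$ is $2$-natural), $h=\beta=(\beta_C):UG\rw UG'$ for the functors of ii), and the invertible $2$-cell $\gamma=(\gamma_C)$ of iii) in the role of the square $\gamma:h\,f^{*}\Rw f'^{*}g$ (which here reads $\beta_C\eta_C\Rw\eta'_C\alpha_C$), that construction produces the pasted $2$-cell $\ovl\gamma$ and shows that $(h,\ovl\gamma):(UG,b,\hat b,\ovl b)\rw(UG',b',\hat{b'},\ovl{b'})$ is a pseudo-$T$-algebra morphism. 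Translating back, this is precisely a pseudo-natural transformation $\beta:G\rw G'$ with $\beta(C)=\beta_C$, together with the invertible $2$-cell $\gamma:\beta\eta\Rw\eta'\alpha$ whose components are the $\gamma_C$, as required.

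The bulk of the argument is bookkeeping: matching the hypotheses of Theorem \ref{s2.the1} and checking that the transported data carry the prescribed components. The one point deserving care---the main, if routine, obstacle---is verifying that the pointwise family $\gamma=(\gamma_C)$ really is a modification in $\Ps[\clC,\Cat]$, equivalently that $(\beta,\ovl\gamma)$ respects the pseudo-algebra $2$-cell structure. This is exactly what the explicit pasting $\ovl\gamma$ from the discussion following Theorem \ref{s2.the1} guarantees, so no genuinely new computation is needed beyond substituting our data $(\alpha,\beta,\gamma)$ into that pasting diagram.
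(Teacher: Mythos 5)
Your proposal is correct and follows essentially the same route as the paper's own proof: the paper likewise views $[\clC,\Cat]$ as 2-monadic over $[ob(\clC),\Cat]$, identifies pseudo-$T$-algebras with pseudo-functors, assembles the pointwise adjoint equivalences into one in $[ob(\clC),\Cat]$, and enriches it via Theorem \ref{s2.the1} to get part a), then obtains part b) from exactly the pasting construction following that theorem (the paper writes out the resulting component $\beta_f:G'(f)\beta_C\Rightarrow\beta_D G(f)$ explicitly, together with the formulas $G(f)=\eta_D F(f)\mu_C$, $\eta_f$, $\mu_f$, which your argument subsumes). Your one flagged care point, that $(\gamma_C)$ assembles into an invertible 2-cell in $\Ps[\clC,\Cat]$, is handled in the paper in the same way, by the explicit pasting defining $\ovl{\gamma}$.
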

\begin{proof}
Recall \cite{PW} that the functor 2-category $[\clC,\Cat]$ is 2-monadic over
$[ob(\clC),\Cat]$, where $ob(\clC)$ is the set of objects in $\clC$. Let
\begin{equation*}
  \clU:[\clC,\Cat]\rw[ob(\clC),\Cat]
\end{equation*}
be the forgetful functor. Let $T$ be the 2-monad; then the pseudo-$T$-algebras are precisely the pseudo-functors from
$\clC$ to $\Cat$.

Then the adjoint equivalences $\mu_C\vdash\eta_C$ amount precisely to an
adjoint equivalence in $[ob(\clC),\Cat]$, $\;\mu_0\vdash\eta_0$,
$\;\mu_0:G_0\;\;\rightleftarrows\;\;\clU F:\eta_0$ where $\;G_0(C)=G(C)$ for
all $C\in ob(\clC)$. This equivalence enriches to an
adjoint equivalence $\mu\vdash\eta$ in $\Ps[\clC,\Cat]$
\begin{equation*}
  \mu:G\;\rightleftarrows\; F:\eta
\end{equation*}
between $F$ and a pseudo-functor $G$; it is $\clU G=G_0$, $\;\clU\eta=\eta_0$,
$\;\clU\mu=\mu_0$; hence on objects $G$ is given by $G(C)$, and
$\eta(C)=\clU\eta(C)=\eta_C$, $\;\mu(C)=\clU\mu(C)=\mu_C$.

Let $\nu_C:\Id_{G(C)}\Rw\eta_C\mu_C$ and $\zve_C:\mu_C\eta_C\Rw\Id_{F(C)}$ be
the unit and counit of the adjunction $\mu_C\vdash\eta_C$. Given a morphism $f:C\rw D$ in $\clC$, it is
\begin{equation*}
  G(f)=\eta_D F(f)\mu_C
\end{equation*}
and we have natural isomorphisms:
\begin{align*}
   & \eta_f : G(f)\eta_C=\eta_D F(f)\mu_C\eta_C\overset{\eta_D F(f)\zve_C}{=\!=\!=\!=\!\Rw} \eta_D F(f)\\
   & \mu_f : F(f)\mu_C\overset{\nu_{F(f)}\mu_C}{=\!=\!=\!\Rw}\mu_D\eta_D F(f)\mu_C=\mu_D
   G(f).
\end{align*}
Also, the natural isomorphism
\begin{equation*}
  \beta_f: G'(f)\beta_C\Rw\beta_D G(f)
\end{equation*}
is the result of the following pasting
\begin{equation*}
    \xy
    0;/r.15pc/:
    (-40,40)*+{G(C)}="1";
    (40,40)*+{G'(C)}="2";
    (-40,-40)*+{G(D)}="3";
    (40,-40)*+{G'(D)}="4";
    (-20,20)*+{F(C)}="5";
    (20,20)*+{F'(C)}="6";
    (-20,-20)*+{F(D)}="7";
    (20,-20)*+{F'(D)}="8";
    {\ar^{\beta_C}"1";"2"};
    {\ar_{G(f)}"1";"3"};
    {\ar^{G'(f)}"2";"4"};
    {\ar_{\beta_D}"3";"4"};
    {\ar^{\alpha_C}"5";"6"};
    {\ar^{F(f)}"5";"7"};
    {\ar_{F'(f)}"6";"8"};
    {\ar_{\alpha'_D}"7";"8"};
    {\ar^{}"5";"1"};
    {\ar^{}"7";"3"};
    {\ar^{}"6";"2"};
    {\ar^{}"8";"4"};
    {\ar@{=>}^{\gamma_C}(0,33);(0,27)};
    {\ar@{=>}^{\gamma_D^{-1}}(0,-27);(0,-33)};
    {\ar@{=>}^{\eta_f'}(30,3);(30,-3)};
    {\ar@{=>}^{\eta_f}(-30,3);(-30,-3)};
\endxy
\end{equation*}
\end{proof}
%%
%%%%%%%%%%%%%%%%%%%%%%%%%%%%%%%%%%%%%%%%%%%%%%%%%%%%%%%%%%%%%%%%%%%%%%%%%%%%%%%%%%
\section{The three Segal-type models}\label{sec-segal-type-model}
\subsection{Homotopically discrete $n$-fold categories}\label{subs-hom-disc-n-fold}
We first recall the category of homotopically discrete \nfol categories, introduced by the author in \cite{Pa1}. This is needed to define weakly globular $n$-fold categories.
\begin{definition}\label{def-hom-dis-ncat}
    Define inductively the full subcategory $\cathd{n}\subset\cat{n}$ of homotopically discrete \nfol categories.

    For $n=1$, $\cathd{1}=\cathd{}$ is the category of  equivalence relations. Denote by $\p{1}=p:\Cat\rw\Set$ the isomorphism classes of objects functor.

    Suppose, inductively, that for each $1\leq k\leq n-1$ we defined $\cathd{k}\subset\cat{k}$ and $k$-equivalences such that the following holds:
    \begin{itemize}
      \item [a)] The $k^{th}$ direction in $\cathd{k}$ is groupoidal; that is, if $X\in\cathd{k}$, $\xi_{k}X\in\Gpd(\cat{k-1})$ (where $\xi_{k}X$ is as in Proposition \ref{pro-assoc-iso}).

      \item [b)] There is a functor $\p{k}:\cathd{k}\rw\cathd{k-1}$ making the following diagram commute:
          \begin{equation}\label{eq1-p-fun-def}
            \xymatrix{
            \cathd{k} \ar^{\Nu{k-1}...\Nu{1}}[rrr] \ar_{p^{(k)}}[d] &&& \funcat{k-1}{\Cat} \ar^{\bar p}[d]\\
            \cathd{k-1} \ar_{\N{k-1}}[rrr] &&& \funcat{k-1}{\Set}
            }
          \end{equation}
          Note that this implies that $(\p{k}X)_{s_1 ... s_{k-1}}=p X_{s_1 ... s_{k-1}}$ for all $(s_1 ... s_{k-1})\in\dop{k-1}$.

    \end{itemize}

   $\cathd{n}$ is the full subcategory of $\funcat{}{\cathd{n-1}}$ whose objects $X$ are such that
    \bigskip
    \begin{itemize}
      \item [(i)]  $\hspace{30mm} X_s\cong\pro{X_1}{X_0}{s} \quad \mbox{for all} \; s \geq 2.$

\medskip
    In particular this implies that $X\in \Cat(\Gpd(\cat{n-2})) =\Gpd(\cat{n-1})$ and the $n^{th}$ direction in $X$ is groupoidal.
    \medskip
      \item [(ii)] The functor
      \begin{equation*}
        \op{n-1}:\cathd{n}\subset \funcat{}{\cathd{n-1}}\rw\funcat{}{\cathd{n-2}}
      \end{equation*}
      restricts to a functor
      \begin{equation*}
        \p{n}:\cathd{n}\rw\cathd{n-1}
      \end{equation*}
     Note that this implies that $(\p{n}X)_{s_1 ... s_{n-1}}=p X_{s_1 ... s_{n-1}}$ and that the following diagram commutes
          \begin{equation}\label{eq2-p-fun-def}
            \xymatrix{
            \cathd{n} \ar^{\Nu{n-1}...\Nu{1}}[rrr] \ar_{p^{(n)}}[d] &&& \funcat{n-1}{\Cat} \ar^{\bar p}[d]\\
            \cathd{n-1} \ar_{\N{n-1}}[rrr] &&& \funcat{n-1}{\Set}
            }
          \end{equation}
     \end{itemize}
\end{definition}
\mk
\begin{definition}\label{def-hom-dis-ncat-0}

    Denote by $\zgu{n}_X:X\rw \di{n}\p{n}X$ the morphism given by
    \begin{equation*}
        (\zgu{n}_X)_{s_1...s_{n-1}} :X_{s_1...s_{n-1}} \rw d p X_{s_1...s_{n-1}}
    \end{equation*}
    for all  $(s_1,...,s_{n-1})\in \dop{n-1}$. Denote by
    \begin{equation*}
        X^d =\di{n}\di{n-1}...\di{1}\p{1}\p{2}...\p{n}X
    \end{equation*}
    and by $\zg\lo{n}$ the composite
    \begin{equation*}
        X\xrw{\zgu{n}}\di{n}\p{n}X \xrw{\di{n}\zgu{n-1}} \di{n}\di{n-1}\p{n-1}\p{n}X \rw \cdots \rw X^d\;.
    \end{equation*}
    For each $a,b\in X_0^d$ denote by $X(a,b)$ the fiber at $(a,b)$ of the map
    \begin{equation*}
        X_1 \xrw{(d_0,d_1)} X_0\times X_0 \xrw{\zg\lo{n}\times\zg\lo{n}} X_0^d\times X_0^d\;.
    \end{equation*}
\end{definition}
\begin{definition}\label{def-hom-dis-ncat-1}
Define inductively $n$-equivalences in $\cathd{n}$. For $n=1$, a 1-equivalence is an equivalence of categories. Suppose we defined $\nm$-equivalences in $\cathd{n-1}$. Then a map $f:X\rw Y$ in $\cathd{n}$ is an $n$-equivalence if, for all $a,b \in X_0^d$, $f(a,b):X(a,b) \rw Y(fa,fb)$ and $\p{n}f$ are $\nm$-equivalences.
\end{definition}
\subsection{Weakly globular $n$-fold categories}\label{subs-weak-glob-n-fold}
\begin{definition}\label{def-n-equiv}
    For $n=1$, $\catwg{1}=\Cat$ and $1$-equivalences are equivalences of categories.

    Suppose, inductively, that we defined $\catwg{n-1}$ and $(n-1)$-equivalences. Then $\catwg{n}$ is the full subcategory of $\funcat{}{\catwg{n-1}}$ whose objects $X$ are such that
    \begin{itemize}
      \item [a)] \textsl{Weak globularity condition} $X_0\in\cathd{n-1}$.\mk
      \item [b)] \textsl{Segal condition} For all $k\geq 2$ the Segal maps are isomorphisms:
      \begin{equation*}
        X_k\cong\pro{X_1}{X_0}{k}\;.
      \end{equation*}

      \item [c)] \textsl{Induced Segal condition} For all $k\geq 2$ the induced Segal maps
      \begin{equation*}
        X_k\rw\pro{X_1}{X^d_0}{k}
      \end{equation*}
      (induced by the map $\zg:X_0\rw X_0^d$) are $(n-1)$-equivalences.\mk

      \item [d)] \textsl{Truncation functor} There is a functor $\p{n}:\catwg{n}\rw\catwg{n-1}$ making the following diagram commute
      \begin{equation*}
        \xymatrix{
        \catwg{n} \ar^{J_n}[rr] \ar_{\p{n}}[d] && \funcat{n-1}{\Cat} \ar^{\ovl p}[d]\\
        \catwg{n-1} \ar^{\Nb{n-1}}[rr]  && \funcat{n-1}{\Set}
        }
    \end{equation*}
    \end{itemize}
    Given $a,b\in X_0^d$, denote by $X(a,b)$ the fiber at $(a,b)$ of the map
    \begin{equation*}
         X_1\xrw{(\pt_0,\pt_1)} X_0\times X_0 \xrw{\zg\times \zg}  X^d_0\times X^d_0\;.
    \end{equation*}
    We say that a map $f:X\rw Y$ in $\catwg{n}$ is an $n$-equivalence if
    \begin{itemize}
      \item [i)] For all $a,b\in X_0^d$
      \begin{equation*}
        f(a,b): X(a,b) \rw Y(fa,fb)
      \end{equation*}
      is an $(n-1)$-equivalence.\mk

      \item [ii)] $\p{n}f$ is an $(n-1)$-equivalence.
    \end{itemize}
    This completes the inductive step in the definition of $\catwg{n}$.
\end{definition}
\begin{remark}\label{rem-n-equiv}
    It follows by Definition \ref{def-n-equiv}, Definition \ref{def-hom-dis-ncat} and \cite[Proposition 3.4]{Pa2} that $\cathd{n}\subset \catwg{n}$.
\end{remark}
\subsection{Weakly globular Tamsamani $n$-categories}\label{subs-weak-glob-Tam}
\begin{definition}\rm\label{def-wg-ps-cat}
    We define the category $\tawg{n}$ by induction on $n$. For $n=1$, $\tawg{1}=\Cat$ and 1-equivalences are equivalences of categories. We denote by $\p{1}=p:\Cat\rw \Set$ the isomorphism classes of object functor.

    Suppose, inductively, that we defined for each $1 < k\leq n-1$
    \begin{equation*}
        \xymatrix{\tawg{k}\;\ar@{^{(}->}^(0.4){}[r] & \;\funcat{k-1}{\Cat}}
    \end{equation*}
    and $k$-equivalences in $\tawg{k}$ as well as a functor
    \begin{equation*}
        \p{k}:\tawg{k}\rw \tawg{k-1}
    \end{equation*}
    sending $k$-equivalences to $(k-1)$-equivalences and making the following diagram commute:
    \begin{equation}\label{eq-wg-ps-cat}
    \xymatrix@C=30pt{
    \tawg{k} \ar^{J_{k}}[rr]\ar_{\p{k}}[d] && \funcat{k-1}{\Cat} \ar^{\ovl{p}}[d]\\
    \tawg{k-1}  \ar^{\Nb{k-1}}[rr] && \funcat{k-1}{\Set}
    }
    \end{equation}
    Define $\tawg{n}$ to be the full subcategory of $\funcat{}{\tawg{n-1}}$ whose objects $X$ are such that
        \begin{itemize}
      \item [a)] \textsl{Weak globularity condition} \;$X_0\in\cathd{n-1}$.\bk

      \item [b)] \textsl{Induced Segal maps condition}. For all $s\geq 2$ the induced Segal maps
      \begin{equation*}
       X_s  \rw \pro{X_1}{X^d_0}{s}
      \end{equation*}
      (induced by the map $\zg:X_0\rw X_0^d$) are $(n-1)$-equivalences.
    \end{itemize}
\medskip
To complete the inductive step, we need to define $\p{n}$ and $n$-equivalences.
Note that the functor
\begin{equation*}
    \ovl{\p{n-1}}:\funcat{}{\tawg{n-1}}\rw \funcat{}{\tawg{n-2}}
\end{equation*}
restricts to a functor
\begin{equation*}
    \p{n}:\tawg{n}\rw \tawg{n-1}\;.
\end{equation*}
In fact, by \eqref{eq-wg-ps-cat} $\p{n-1}$ preserves pullbacks over discrete objects so that
\begin{equation*}
  \p{n-1}(\pro{X_1}{X_0^d}{s}) \cong\pro{\p{n-1}X_1}{(\p{n-1}X_0^d)}{s}\; .
\end{equation*}
 Further, $\p{n-1}X_0^d = (\p{n-1}X_0)^d$ and $\p{n-1}$ sends $(n-1)$-equivalences to $(n-2)$-equivalences.

 Therefore, the induced Segal maps for $s\geq 2$
\begin{equation*}
    X_s \rw\pro{X_1}{X_0^d}{s}
\end{equation*}
being $(n-1)$-equivalences, give rise to $(n-2)$-equivalences
\begin{equation*}
     \p{n-1}X_s \rw \pro{\p{n-1}X_1}{(\p{n-1}X_0)^d}{s}\; .
\end{equation*}
This shows that $\p{n}X \in \tawg{n-1}$. It is immediate that \eqref{eq-wg-ps-cat} holds at step $n$.

Given $a,b\in X_0^d$, denote by $X(a,b)$ the fiber at $(a,b)$ of the map
    \begin{equation*}
         X_1\xrw{(\pt_0,\pt_1)} X_0\times X_0 \xrw{\zg\times \zg}  X^d_0\times X^d_0\;.
    \end{equation*}
    We say that a map $f:X\rw Y$ in $\tawg{n}$ is an $n$-equivalence if
    \begin{itemize}
      \item [i)] For all $a,b\in X_0^d$
      \begin{equation*}
        f(a,b): X(a,b) \rw Y(fa,fb)
      \end{equation*}
      is an $(n-1)$-equivalence.\mk

      \item [ii)] $\p{n}f$ is an $(n-1)$-equivalence.
    \end{itemize}
    This completes the inductive step in the definition of $\tawg{n}$.

\end{definition}
\begin{remark}\label{rem-wg-ps-cat}
    It follows by Definition \ref{def-n-equiv} that $\catwg{n}\subset\tawg{n}$.
\end{remark}
\begin{definition}\label{def-x-tawg-disc}
    An object $X\in\tawg{n}$ is called discrete if $\Nb{n-1}X$ is a constant functor.
\end{definition}
\begin{example}\label{ex-tam}
Tamsamani $n$-categories.
\mk

A special case of weakly globular Tamsamani $n$-category occurs when $X\in\tawg{n}$ is such that $X_0$ and $X_{\oset{r}{1...1}0}$ are discrete for all $1 \leq r \leq n-2$. The resulting category is the category $\Tan$ of Tamsamani's $n$-categories. Note that, if $X\in\Tan$ then $X_s \in \ta{n-1}$ for all $n$, the induced Segal maps $\hmu{s}$ coincide with the Segal maps
\begin{equation*}
  \nu_s:X_s\rw \pro{X_1}{X_0}{s}
\end{equation*}
and $\p{n}X\in\ta{n-1}$. Hence this recovers the original definition of Tamsamani's weak $n$-category \cite{Ta}.

\end{example}
\subsection{Segalic pseudo-functors}\label{segalic-pseudo} We now recall the notion of Segalic pseudo-functor from \cite{Pa2}.

Let $H\in\Ps\funcat{n}{\Cat}$ be such that $H_{\uk(0,i)}$ is discrete for all $\uk\in\Dmenop$ and all $i\geq 0$. Then the following diagram commutes, for each $k_i\geq 2$.
\begin{equation*}
    \xy
    0;/r.8pc/:
    (0,0)*+{H_{\uk}}="1";
    (-7,-5)*+{H_{\uk(1,i)}}="2";
    (-2,-5)*+{H_{\uk(1,i)}}="3";
    (9,-5)*+{H_{\uk(1,i)}}="4";
    (-10,-9)*+{H_{\uk(0,i)}}="5";
    (-5,-9)*+{H_{\uk(0,i)}}="6";
    (0,-9)*+{H_{\uk(0,i)}}="7";
    (6,-9)*+{H_{\uk(0,i)}}="8";
    (12,-9)*+{H_{\uk(0,i)}}="9";
    (3,-5)*+{\cdots}="10";
    (3,-9)*+{\cdots}="11";
    {\ar_{\nu_1}"1";"2"};
    {\ar^{\nu_2}"1";"3"};
    {\ar^{\nu_k}"1";"4"};
    {\ar_{d_1}"2";"5"};
    {\ar^{d_0}"2";"6"};
    {\ar^{d_1}"3";"6"};
    {\ar^{d_0}"3";"7"};
    {\ar_{d_1}"4";"8"};
    {\ar^{d_0}"4";"9"};
    \endxy
\end{equation*}
There is therefore a unique Segal map
\begin{equation*}
    H_{\uk}\rw \pro{H_{\uk(1,i)}}{H_{\uk(0,i)}}{k_i}\;.
\end{equation*}
\begin{definition}\label{def-seg-ps-fun}
    We\; define\; the \;subcategory \; $\segpsc{n}{\Cat}$\; of $\psc{n}{\Cat}$ as follows:

    For $n=1$, $H\in \segpsc{}{\Cat}$ if $H_0$ is discrete and the Segal maps are isomorphisms: that is, for all $k\geq 2$
    \begin{equation*}
        H_k\cong\pro{H_1}{H_0}{k}
    \end{equation*}
    Note that, since $p$ commutes with pullbacks over discrete objects, there is a functor
   \begin{equation*}
   \begin{split}
       & \p{2}:\segpsc{}{\Cat} \rw \Cat\;, \\
       & (\p{2}X)_{k}=p X_k\;.
   \end{split}
   \end{equation*}
   That is the following diagram commutes:
   \begin{equation*}
    \xymatrix{
    \segpsc{}{\Cat} \ar@{^{(}->}[rr]\ar_{\p{2}}[d] && \psc{}{\Cat} \ar^{\ovl{p}}[d]\\
    \Cat \ar[rr] && \funcat{}{\Set}
    }
   \end{equation*}
   When $n>1$, $\segpsc{n}{\Cat}$ is the full subcategory of $\psc{n}{\Cat}$ whose objects $H$ satisfy the following: \mk
   \begin{itemize}
     \item [a)] \emph{Discreteness condition}:  $H_{\uk(0,i)}$ is discrete for all $\uk\in\Dmenop$ and $1 \leq i \leq n$.\mk
     \item [b)] \emph{Segal condition}: All Segal maps are isomorphisms
      \begin{equation*}
      H_{\uk}\cong\pro{H_{\uk(1,i)}}{H_{\uk(0,i)}}{k_i}
      \end{equation*}
      for all $\uk\in\Dmenop$, $1 \leq i \leq n$ and $k_i\geq2$.\mk
     \item [c)] \emph{Truncation functor}: There is a functor
     \begin{equation*}
        \p{n+1}:\segpsc{n}{\Cat}\rw \catwg{n}
     \end{equation*}
   \end{itemize}
    making the following diagram commute:
    \begin{equation*}
        \xymatrix{
        \segpsc{n}{\Cat} \ar@{^(->}[r] \ar_{\p{n+1}}[d] & \psc{n}{\Cat} \ar^{\ovl{p}}[d]\\
        \catwg{n} \ar_{\Nb{n}}[r] & \funcat{n}{\Set}
        }
    \end{equation*}
\end{definition}
\bigskip

The main property of Segalic pseudo-functors is the theorem below stating that that the classical strictification of pseudo-functors, when applied to a Segalic pseudo-functors, yields weakly globular \nfol categories.

\begin{theorem}\cite[Theorem 4.5]{Pa2}\label{the-strict-funct}
    The strictification functor
    \begin{equation*}
        \St  : \psc{n-1}{\Cat}\rw \funcat{n-1}{\Cat}
    \end{equation*}
    restricts to a functor
    \begin{equation*}
        L_n: \segpsc{n-1}{\Cat}\rw J_n\catwg{n}
    \end{equation*}
    where $J_n\catwg{n}$ denotes the image of the fully faithful functor $J_n:\catwg{n}\rw\funcat{n-1}{\Cat}$\;.
    Further, for each $H\in \segpsc{n-1}{\Cat}$ and $\uk\in\dop{n-1}$, the map $(L_n H)_{\uk}\rw H_{\uk}$ is an equivalence of categories.
\end{theorem}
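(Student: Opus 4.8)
The plan is to work with the explicit description of the strictification $L=\St H$ recalled above from \cite{PW}: it is the factorization $TH\xrw{v}L\xrw{g}H$ of the structure map in which each $v_\uk$ is bijective on objects and each $g_\uk$ is fully faithful. With this description the last clause of the statement is immediate, since $g_\uk$ is the fully faithful factor of the (objectwise essentially surjective) map $h_\uk$, hence an equivalence of categories; thus $(L_n H)_\uk=L_\uk\to H_\uk$ is an equivalence. The real content is to prove that $L$ lies in $J_n\catwg{n}$, which I would do by verifying the conditions characterizing that image.

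First I would show that all Segal maps of $L$, in every direction $1\le i\le n-1$, are isomorphisms; by Lemma \ref{lem-char-obj} this already places $L$ in (the image of) $\cat{n}$. On objects, since $v_\uk$ is bijective on objects we have $\Obj L_\uk=\coprod_{\ur}\Dop(\ur,\uk)\times\Obj H_\ur$, with the face maps acting by composition on the $\Dop(\ur,\uk)$ factor only. Forming the fibre product of object sets defining the target of the Segal map then reduces, direction by direction, to the elementary fact that two order-preserving maps $[1]\to[r]$ agreeing on the common vertex glue uniquely to an order-preserving map $[2]\to[r]$ (and its evident $k_i$-fold analogue); this makes each Segal map a bijection on objects. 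On morphisms I would use that $g_\uk$ fully faithful gives $\Hom_{L_\uk}(x,y)=\Hom_{H_\uk}(h_\uk x,h_\uk y)$, transporting the morphism-level comparison for $L$ to the one for $H$. Here the discreteness hypothesis of Definition \ref{def-seg-ps-fun} is essential: it forces each face $L_{\uk(0,i)}$ to be homotopically discrete, so that a compatible family of morphisms in $\pro{L_{\uk(1,i)}}{L_{\uk(0,i)}}{k_i}$ is determined by a mere compatibility condition, and the Segal isomorphisms assumed for $H$ then yield the required bijection on hom-sets.

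Granting $L\in\cat{n}$, it remains to verify conditions (a)--(d) of Definition \ref{def-n-equiv} for the object $X$ with $J_n X=L$. Weak globularity (a) follows from discreteness: the relevant $0$-faces of $L$ are levelwise equivalent, via $g$, to the discrete faces of $H$, which places $X_0$ in $\cathd{n-1}$ (the lower-dimensional bookkeeping being handled by induction on $n$). Condition (b) is exactly the strict Segal isomorphism just established. For the induced Segal condition (c) I would factor the induced Segal map as the strict Segal isomorphism followed by the comparison induced by $\zg:X_0\to X_0^d$, and deduce that it is an $(n-1)$-equivalence from the fact that $X_0\to X_0^d$ is one (as $X_0$ is homotopically discrete) together with the behaviour of such maps under fibre products over discrete objects, using Lemma \ref{lem-q-pres-fib-pro}. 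The truncation functor (d) is provided directly by the truncation condition in the definition of $\segpsc{n-1}{\Cat}$, which supplies $\p{n}$ and the commuting square with $\ovl p$. Functoriality of $L_n$ is inherited from that of $\St$, as $J_n\catwg{n}$ is a full subcategory of $\funcat{n-1}{\Cat}$ into which the objects land.

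The step I expect to be the main obstacle is the strictness of the Segal maps of $L$, especially at the morphism level, because strictification of an arbitrary pseudo-functor produces only an equivalence, not an isomorphism, of Segal maps. What rescues strictness is precisely the interplay of the two defining features of Segalic pseudo-functors with the PW construction: the bijective-on-objects factor $v$ makes the gluing of simplicial operators exact at the level of objects, while the discreteness of the $0$-faces collapses the morphism-level pullback onto the Segal isomorphisms already assumed for $H$. Carrying this through uniformly in all $n-1$ directions, and reconciling the resulting multi-simplicial conditions with the inductive definition of $\catwg{n}$, is where the argument requires the most care.
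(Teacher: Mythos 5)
First, a caveat on the comparison: this paper does not actually prove Theorem \ref{the-strict-funct} --- the statement is imported from \cite[Theorem 4.5]{Pa2}, and only the ingredients for that proof (the factorization $TH\xrw{v}L\xrw{g}H$ of \cite{PW}, with $v_{\uk}$ bijective on objects and $g_{\uk}$ fully faithful) are recalled in Section \ref{sec-prelim}. Your proposal reconstructs exactly the strategy of the cited proof: the levelwise equivalences come from the bijective-on-objects/fully-faithful factorization (essential surjectivity of $h_{\uk}$ following from the pseudo-algebra unit axiom); the strict Segal condition for $L$ comes from freeness at the object level (your gluing argument, noting that compatibility in the fiber product forces equality of the summand index and of the element of $\Obj H_{\ur}$) combined with discreteness of the faces $H_{\uk(0,i)}$ at the morphism level, where, as you implicitly use, the coherence isomorphisms of the pseudo-functor become identities in a discrete category; membership in $\cat{n}$ then follows from Lemma \ref{lem-char-obj}; and the truncation datum is extracted from condition (c) of Definition \ref{def-seg-ps-fun} together with invariance of $\ovl{p}$ under levelwise equivalence. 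On all of these points your outline is sound and matches \cite{Pa2}.

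Two steps are under-justified as written. First, weak globularity: ``levelwise equivalent via $g$ to the discrete faces of $H$'' is not the definition of $\cathd{n-1}$, which is an inductive structure requiring strict Segal conditions, groupoidal directions and truncation functors $\p{k}$ at every stage. What actually rescues the argument is that $g_{\uk}$ is fully faithful into a \emph{discrete} category, so each level of the multinerve of $X_0$ is an equivalence relation (not merely equivalent to a discrete category); this, the strict Segal conditions already established, and the $\ovl{p}$-data must then be assembled by induction --- in \cite{Pa2} this is packaged into explicit membership criteria (the analogues of Lemma 3.14 and Proposition 3.16 of \cite{Pa2} that the present paper invokes elsewhere, e.g.\ in the proof of Proposition \ref{pro-gen-const-1}). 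Second, the induced Segal condition: Lemma \ref{lem-q-pres-fib-pro} concerns $q:\Cat\rw\Set$ preserving fiber products over discrete objects, and cannot by itself deliver that the comparison $\pro{X_1}{X_0}{k}\rw\pro{X_1}{X_0^d}{k}$ induced by $\zg:X_0\rw X_0^d$ is an $(n-1)$-equivalence; that requires the dedicated results on homotopically discrete objects (isofibration-style arguments, or the criteria just mentioned, under which condition (c) of Definition \ref{def-n-equiv} becomes automatic given (a), (b) and (d)). Both gaps are fixable, and with those repairs your argument coincides with the proof given in \cite{Pa2}.
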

\subsection{From weakly globular Tamsamani $n$-categories to Segalic pseudo-functors}\label{subs-from-Tam-to-segalic-pseudo}
\begin{definition}\label{def-ind-sub-ltawg}
    Define inductively the subcategory $\lta{n}\subset\tawg{n}$. For $n=2$, $\lta{2}=\tawg{2}$. Suppose we defined $\lta{n-1}\subset \tawg{n}$. Let $\lta{n}$ be the full subcategory of $\tawg{n}$ whose objects $X$ are such that
    \begin{itemize}
      \item [i)] $X_k\in\lta{n-1}$ for all $k\geq 0$.\bk

      \item [ii)] The maps in $\funcat{n-2}{\Cat}$
    \begin{equation*}
        v_k: J_{n-1} X_k \rw J_{n-1}(\pro{X_1}{\di{n-1}\p{n-1}X_0}{k})
    \end{equation*}
    are levelwise equivalences of categories for all $k\geq 2$\bk

     \item [iii)] $\p{n}X\in\catwg{n-1}$.
    \end{itemize}
\end{definition}
\begin{notation}\label{not-ind-seg-map}
    Let $X\in\tawg{n}$, $\uk=(k_1,\ldots,k_{n-1})\in\dop{n-1}$, $1\leq i\leq n-1$. Then there is $X^i_{\uk}\in\funcat{}{\Cat}$ with
    \begin{equation*}
        (X^i_{\uk})_r=X_{\uk(r,i)}=X_{k_1...k_{i-1}r k_{i+1}...k_{n-1}}
    \end{equation*}
    so that $(X^i_{\uk})_{k_i}=X_{\uk}$. Since $X_{k_1,\ldots,k_{i-1}}\in\tawg{n-i+1}$, $X_{k_1,\ldots,k_{i-1}0}\in\cathd{n-i}$ and thus by \cite[Lemma 3.1]{Pa1}
    \begin{equation*}
        X_{k_1...k_{i-1} 0 k_{i+1}...k_{n-1}}= X_{\uk(0,i)}\in\cathd\;.
    \end{equation*}
    We therefore obtain induced Segal maps in $\Cat$ for all $k_i\geq 2$.
    \begin{equation}\label{eq1-not-ind-seg-map}
        \nu(\uk,i):X_{\uk}\rw \pro{X_{\uk(1,i)}}{X^d_{\uk(0,i)}}{k_i}\;.
    \end{equation}
\end{notation}
\begin{lemma}\cite[Lemma 7.2]{Pa3}\label{lem-maps-nu-eqcat}
    Let $X\in\lta{n}$; then for each $\uk\in\dop{n-1}$, $1\leq i\leq n-1$ and $k_i\geq 2$ the maps $ \nu(\uk,i)$ as in \eqref{eq1-not-ind-seg-map} are equivalences of categories.
\end{lemma}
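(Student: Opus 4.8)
The plan is to prove the statement by induction on $n$, splitting the inductive step according to whether the distinguished direction is $i\geq 2$ or $i=1$. For the base case $n=2$ we have $\lta{2}=\tawg{2}$, the only multi-index is $\uk=(k_1)\in\Dop$, and the only admissible direction is $i=1$. Here the sliced simplicial object $X^1_{(k_1)}$ is simply $X$ viewed as a simplicial object in $\Cat$, so $\nu((k_1),1)$ is precisely the induced Segal map $X_{k_1}\rw\pro{X_1}{X^d_0}{k_1}$; by the induced Segal condition of Definition \ref{def-wg-ps-cat} these are $1$-equivalences, that is, equivalences of categories, and the base case follows.

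For the inductive step assume the result for $\lta{n-1}$ and let $X\in\lta{n}$. I would first dispose of the directions $i\geq 2$ using condition (i) of Definition \ref{def-ind-sub-ltawg}, which gives $X_{k_1}\in\lta{n-1}$. Setting $\ur=(k_2,\ldots,k_{n-1})\in\dop{n-2}$, a direct comparison of the defining formulas (Notation \ref{not-ind-seg-map}) shows that the simplicial object $X^i_{\uk}$ obtained by slicing $X$ in direction $i$ agrees with the simplicial object $(X_{k_1})^{i-1}_{\ur}$ obtained by slicing $X_{k_1}$ in direction $i-1$, and that the homotopically discrete base objects coincide because $X_{\uk(0,i)}=(X_{k_1})_{\ur(0,i-1)}$. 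Hence $\nu(\uk,i)$ is literally the map $\nu(\ur,i-1)$ for $X_{k_1}\in\lta{n-1}$, which is an equivalence of categories by the inductive hypothesis.

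The case $i=1$ is the crux, and I would use condition (ii) of Definition \ref{def-ind-sub-ltawg}: the map $v_{k_1}:J_{n-1}X_{k_1}\rw J_{n-1}(\pro{X_1}{\di{n-1}\p{n-1}X_0}{k_1})$ is a levelwise equivalence of categories. Evaluating at the multi-index $(k_2,\ldots,k_{n-1})$ and using that the nerve functors preserve pullbacks (being right adjoints) while pullbacks in $\funcat{n-2}{\Cat}$ are computed pointwise, exactly as in the proof of Lemma \ref{lem-char-obj-II}, yields an equivalence of categories
\begin{equation*}
X_{\uk}\;\xrw{\ \simeq\ }\;\pro{X_{\uk(1,1)}}{(\di{n-1}\p{n-1}X_0)_{k_2\ldots k_{n-1}}}{k_1}\;.
\end{equation*}
It then remains to compare this fiber product, taken over the one-step discretization $\di{n-1}\p{n-1}X_0$ used in the definition of $\lta{n}$, with the fiber product $\pro{X_{\uk(1,1)}}{X^d_{\uk(0,1)}}{k_1}$ over the full discretization used in $\nu(\uk,1)$, and to recognise the composite as $\nu(\uk,1)$.

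For this last comparison I would argue that, since $X_0\in\cathd{n-1}$ is homotopically discrete, its successive discretizations induce at each fixed multi-index equivalences of categories of equivalence relations; in particular the base map $(\di{n-1}\p{n-1}X_0)_{k_2\ldots k_{n-1}}\rw X^d_{\uk(0,1)}$ is an equivalence onto the discrete category of path components of $X_{\uk(0,1)}$. This base equivalence then has to be promoted to an equivalence on the $k_1$-fold fiber products, using that fiber products over a (homotopically) discrete base split as coproducts of products, as in Lemma \ref{lem-q-pres-fib-pro} and its proof, so that the comparison is controlled fibrewise over the components of $X_0$. Composing gives that $\nu(\uk,1)$ is an equivalence of categories, which completes the induction. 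The main obstacle is exactly this reconciliation: condition (ii) of $\lta{n}$ is stated for the one-step discretization $\di{n-1}\p{n-1}X_0$, whereas $\nu(\uk,1)$ involves the full discretization $X^d_{\uk(0,1)}$, and the two genuinely differ (an equivalence relation versus a discrete set); upgrading the evident base equivalence to an equivalence of the iterated fiber products is where the homotopical discreteness of $X_0$ and the behaviour of fibre products over discrete objects must be used with care.
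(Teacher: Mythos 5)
Your induction scheme is the right one, and indeed the only one the definitions support: the base case $n=2$ from the induced Segal map condition in $\tawg{2}=\lta{2}$, the directions $i\geq 2$ by observing that $\nu(\uk,i)$ for $X$ is literally $\nu(\ur,i-1)$ for $X_{k_1}\in\lta{n-1}$ (condition (i) of Definition \ref{def-ind-sub-ltawg}), and the direction $i=1$ from condition (ii) evaluated at $\ur=(k_2,\ldots,k_{n-1})$. Note that the paper itself imports this lemma from \cite[Lemma 7.2]{Pa3} without reproducing a proof, so the comparison is with the argument the definitions are engineered for; your first two cases reconstruct it completely and correctly.

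However, the ``main obstacle'' on which you end, and which you leave unresolved, is illusory, so your proof as written does not close even though the statement is within reach. You assert that the base of condition (ii) and the base of $\nu(\uk,1)$ ``genuinely differ (an equivalence relation versus a discrete set)''. They do not, once evaluated: by Definition \ref{def-hom-dis-ncat-0} the map $\zgu{n-1}:X_0\rw\di{n-1}\p{n-1}X_0$ has components $(X_0)_{\us}\rw d\,p\,(X_0)_{\us}$ for every $\us\in\dop{n-2}$; equivalently, $\di{n-1}$ makes $\p{n-1}X_0$ discrete exactly in the direction that $J_{n-1}$ leaves as the $\Cat$-direction, so
\begin{equation*}
\bigl(J_{n-1}\di{n-1}\p{n-1}X_0\bigr)_{\ur}\;=\;d\,p\,(X_0)_{\ur}\;=\;X^d_{\uk(0,1)}\;,
\end{equation*}
the discrete category on the path components of the equivalence relation $X_{\uk(0,1)}=(X_0)_{\ur}$. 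Moreover the structure maps agree, since the component of $\zgu{n-1}$ at $\ur$ is precisely the discretization map of $X_{\uk(0,1)}$; hence the evaluation of $v_{k_1}$ at $\ur$ is $\nu(\uk,1)$ on the nose, and condition (ii) finishes the case with no further comparison. The one-step/full distinction is real only for the un-evaluated object ($\di{n-1}\p{n-1}X_0\neq X_0^d$ as $(n-1)$-fold structures in general) and collapses at any fixed full multi-index. This is not merely a matter of tidiness: had the two bases really differed by a non-trivial equivalence, your proposed repair --- transporting a base equivalence across the iterated \emph{strict} fiber products via the coproduct splitting of Lemma \ref{lem-q-pres-fib-pro} --- would itself need an isofibration-type hypothesis (compare Lemmas \ref{lem-gen-const-1} and \ref{lem-gen-const-4}), because strict pullbacks in $\Cat$ are not invariant under equivalences of the base. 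So the step you flag as requiring ``care'' is, as sketched, not obviously repairable; fortunately it is unnecessary.
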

\begin{theorem}\cite[Theorem 7.3]{Pa3}\label{the-XXXX}
    There is a functor
    \begin{equation*}
        Tr_{n}: \lta{n} \rw \segpsc{n-1}{\Cat}
    \end{equation*}
    together with a pseudo-natural transformation $t_n (X):Tr_{n}X\rw X$ for each $X\in\lta{n}$ which is a levelwise equivalence of categories.
\end{theorem}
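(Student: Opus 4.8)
The plan is to prove the theorem by \emph{transport of structure} along discretization equivalences, with Lemma \ref{lem-PP} as the main engine and the whole construction organised by induction on $n$. I identify $X \in \lta{n}$ with the strict $(n-1)$-simplicial object $J_n X$, that is with a $2$-functor $F = J_n X : \Dmenop \rw \Cat$. The goal is to replace $F$ by a pseudo-functor $Tr_n X$ having equivalent values but whose values at every $0$-corner $\uk(0,i)$ are \emph{discrete}, so that the Segal maps, which in $X$ are only induced equivalences, become genuine isomorphisms.

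First I would define the target categories $G(\uk) := (Tr_n X)_{\uk}$ for $\uk \in \Dmenop$. The recipe is dictated by the two conditions defining $\segpsc{n-1}{\Cat}$: I impose discreteness by setting $G(\uk)$, whenever some coordinate $k_i = 0$, equal to the discretization $X_{\uk}^d$ of the homotopically discrete category $X_{\uk}$ (Notation \ref{not-ind-seg-map}, Definition \ref{def-hom-dis-ncat-0}); and I impose the Segal condition by setting $G(\uk) = \pro{G(\uk(1,i))}{G(\uk(0,i))}{k_i}$ whenever $k_i \geq 2$. One must check these prescriptions agree when several coordinates vanish at once, which follows from the compatibility of the iterated truncation functors $\p{k}$ built into the definition of $\cathd{n}$.

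Next I would supply, for each $\uk$, an adjoint equivalence $\mu_{\uk} \vdash \eta_{\uk}$ between $G(\uk)$ and $F(\uk) = X_{\uk}$. On a $0$-corner this is a chosen pseudo-inverse of the discretization map $\zg : X_{\uk} \rw X_{\uk}^d$, which is an equivalence of categories precisely because $X_{\uk}$ is homotopically discrete; on a general index it is assembled from these together with pseudo-inverses of the induced Segal maps $\nu(\uk,i)$, which are equivalences of categories by Lemma \ref{lem-maps-nu-eqcat}. With the $2$-functor $F$, the categories $G(\uk)$ and the adjoint equivalences $\mu_{\uk} \vdash \eta_{\uk}$ in hand, Lemma \ref{lem-PP}(a) --- an instance of the transport of structure of Theorem \ref{s2.the1} --- produces a pseudo-functor $G = Tr_n X$ on objects $G(\uk)$ together with a pseudo-natural transformation $\mu : G \rw F$ whose components are the $\mu_{\uk}$; putting $t_n(X) = \mu$ gives the desired levelwise equivalence $Tr_n X \rw X$. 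By construction $Tr_n X$ satisfies the discreteness and Segal-isomorphism conditions, while the truncation condition (c) in the definition of $\segpsc{n-1}{\Cat}$ follows from condition (iii) of $\lta{n}$, namely $\p{n}X \in \catwg{n-1}$, together with the fact that $p$ sends equivalences of categories to isomorphisms and that $t_n(X)$ is a levelwise equivalence.

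Functoriality of $Tr_n$ is then obtained from Lemma \ref{lem-PP}(b): for $f : X \rw Y$ in $\lta{n}$ one takes $\alpha = J_n f$, defines $\beta_{\uk} : G_X(\uk) \rw G_Y(\uk)$ as the induced map on discretizations and Segal pullbacks, chooses the invertible $2$-cells $\gamma_{\uk}$ compatibly, and reads off a morphism $Tr_n f$ commuting with the $t_n$'s. I expect the main obstacle to be exactly this functoriality: arranging the choices of pseudo-inverses $\mu_{\uk}$ and of the $2$-cells $\gamma_{\uk}$ to be \emph{coherent and natural in $X$}, so that $Tr_n$ is a genuine functor and $t_n$ a natural family, rather than merely a pointwise assignment. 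This is what the restrictive definition of $\lta{n}$ (the levelwise-equivalence condition on the maps $v_k$ and the requirement $\p{n}X \in \catwg{n-1}$) is designed to make possible, and it is precisely what fails on all of $\tawg{n}$; checking the coherence of the $\gamma_{\uk}$ and that the transported structure genuinely lies in the subcategory $\segpsc{n-1}{\Cat}$ is where the real work lies.
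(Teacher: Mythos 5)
Your proposal is correct and follows essentially the same route as the paper's proof (given in \cite[Theorem 7.3]{Pa3} and signposted here in Section \ref{transport-structure}): one transports structure along the discretization maps and the induced Segal map equivalences of Lemma \ref{lem-maps-nu-eqcat} using Lemma \ref{lem-PP}, which is exactly why that lemma is stated as being ``used crucially in the proof of Theorem \ref{the-XXXX}'', and your prescription of the values $(Tr_n X)_{\uk}$ matches the trichotomy i)--iii) listed after the theorem. Your handling of functoriality via Lemma \ref{lem-PP}(b), with $\beta_{\uk}$ the induced maps on discretizations and Segal pullbacks and the $2$-cells $\gamma_{\uk}$ chosen compatibly (automatic on homotopically discrete targets, where such $2$-cells are unique), is likewise the argument of \cite{Pa3}.
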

As explained in \cite{Pa3}, for each $X\in\lta{n}$, $\uk\in\dop{n-1}$, $1\leq i\leq n-1$ we have
\begin{itemize}
  \item [i)] If $k_j=0$ for some $1\leq j\leq n-1$
  \begin{equation*}
    (Tr_{n} X)_{\uk}=X^d_{\uk}\;.
  \end{equation*}

  \item [ii)] If $k_j\neq 0$ for all $1\leq j\leq n-1$ and $k_i=1$,
  \begin{equation*}
    (Tr_{n} X)_{\uk}=X_{\uk(1,i)}=X_{\uk}\;.
  \end{equation*}

  \item [iii)] If $k_j\neq 0$ for all $1\leq j\leq n-1$ and $k_i>1$,
  \begin{equation*}
    (Tr_{n} X)_{\uk}=\pro{X_{\uk(1,i)}}{X^d_{\uk(0,i)}}{k_i} \;.
  \end{equation*}

  \end{itemize}
 \bigskip
We now prove an additional property of the category $\lta{n}$ which we will be needed in Section \ref{sec-fta-to-tam}.
\begin{lemma}\label{lem-property-lta}
    Let $X,Y\in\lta{n}$ be such that
    \begin{itemize}
      \item [i)] For each $\uk\in\dop{n-1}$ such that $k_j=0$ for some $1\leq j\leq n-1$, $X^d_{\uk}\cong Y^d_{\uk}$.

      \item [ii)] For each $\uk\in\dop{n-1}$ such that $k_j\neq 0$ for all $1\leq j\leq n-1$, $X^d_{\uk}\cong Y^d_{\uk}$.
    \end{itemize}
    Then $Tr_n X\cong Tr_n Y$
\end{lemma}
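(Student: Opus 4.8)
The plan is to exploit the completely explicit description of $Tr_n$ recorded after Theorem \ref{the-XXXX}: the value of the Segalic pseudo-functor $Tr_n X$ at each $\uk$ is assembled out of the objects $X_{\uk'}$ at all-nonzero multi-indices $\uk'$ and the discretizations $X^d_{\uk'}$ at multi-indices $\uk'$ with a zero entry. Hypotheses (ii) and (i) are exactly the assertions that these two kinds of data agree for $X$ and $Y$, and the whole proof consists in feeding this matching through the three cases and then checking it is compatible with the pseudo-functor structure.

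First I would record the key bookkeeping observation. Recall that the partial discretization $X^d$ leaves $X$ unchanged in every direction indexed by a nonzero entry, so $X^d_{\uk}=X_{\uk}$ whenever all components of $\uk$ are nonzero; thus hypothesis (ii) says precisely that $X_{\uk}\cong Y_{\uk}$ for all such $\uk$, while hypothesis (i) controls the discretized objects $X^d_{\uk}$ with a zero entry. Now inspect the three cases. In case i) the value is $X^d_{\uk}$ and matches by (i). In case ii), $\uk$ has all entries nonzero and the value is $X_{\uk}$, which matches by (ii). In case iii), $\uk(1,i)$ again has all entries nonzero, so $X_{\uk(1,i)}\cong Y_{\uk(1,i)}$ by (ii), while $X^d_{\uk(0,i)}\cong Y^d_{\uk(0,i)}$ by (i); since $(Tr_n X)_{\uk}=\pro{X_{\uk(1,i)}}{X^d_{\uk(0,i)}}{k_i}$ is a pullback of this data, functoriality of pullbacks produces an isomorphism $(Tr_n X)_{\uk}\cong (Tr_n Y)_{\uk}$. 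Hence $Tr_n$ factors, on objects, through the levelwise data that (i) and (ii) identify.

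Next I would organize this via the subcube of multi-indices lying in $\{0,1\}^{n-1}$. By the discreteness and Segal conditions defining $\segpsc{n-1}{\Cat}$ (Definition \ref{def-seg-ps-fun}), a Segalic pseudo-functor is determined up to isomorphism by its restriction to this subcube together with the face maps inside it, every higher value being recovered as an iterated pullback along those face maps (and such pullbacks behave well by Lemma \ref{lem-char-obj-II}). On the subcube, $(Tr_n X)_{\uk}$ is $X^d_{\uk}$ for $\uk$ with a zero entry and $X_{1\ldots 1}=X^d_{1\ldots 1}$ at the top; so the subcube diagram of $Tr_n X$ is the subcube diagram of the partial discretization $X^d$. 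The isomorphisms supplied by (i) and (ii) then furnish an isomorphism of these subcube diagrams, which propagates through the iterated pullbacks to give compatible isomorphisms in every degree.

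The main obstacle is the final step: promoting this object-level matching to an honest isomorphism in $\psc{n-1}{\Cat}$. One must verify that the chosen isomorphisms commute with all face and degeneracy operators of $Tr_n X$ — which are built from the discretization maps $\zg$ and the Segal projections — and with the coherence $2$-cells of the pseudo-functor. I expect this to be formal once the subcube isomorphism is fixed, because every structure map and every coherence cell of $Tr_n X$ is canonically induced from the subcube data through the universal properties of the defining pullbacks; transporting them along the subcube isomorphism yields exactly the corresponding data for $Tr_n Y$. The work here is thus careful bookkeeping rather than a new idea, and it is this compatibility check that forms the substance of the argument.
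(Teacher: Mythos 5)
Your proposal is correct and takes essentially the same route as the paper: the paper's proof is exactly your three-case inspection of the explicit formulas recorded after Theorem \ref{the-XXXX}, namely $(Tr_n X)_{\uk}=X^d_{\uk}$ when some $k_j=0$ (matched via hypothesis (i)), $(Tr_n X)_{\uk}=X_{\uk}$ when all $k_j\neq 0$ and $k_i=1$ (matched via hypothesis (ii), which the paper likewise reads as $X_{\uk}\cong Y_{\uk}$, confirming your bookkeeping observation), and $(Tr_n X)_{\uk}=\pro{X_{\uk(1,i)}}{X^d_{\uk(0,i)}}{k_i}$ otherwise (matched by taking pullbacks of the isomorphic data), after which it concludes $(Tr_n X)_{\uk}\cong (Tr_n Y)_{\uk}$ for all $\uk$ and hence $Tr_n X\cong Tr_n Y$. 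Your closing subcube organization and the compatibility check with face/degeneracy operators and coherence cells go beyond what the paper verifies — its proof stops at the levelwise isomorphisms — so that portion is extra care rather than a divergence in method.
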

\begin{proof}
Let $\uk\in\dop{n-1}$ with $k_j=0$ for some $j$. Then by definition
\begin{equation*}
    (Tr_n X)_{\uk}= X_{\uk}^d\cong Y_{\uk}^d=(Tr_n Y)_{\uk}\;.
\end{equation*}
Let $\uk\in\dop{n-1}$ be such that $k_j\neq 0$ for all $j$ and suppose $k_i=1$. Then
\begin{equation*}
    (Tr_n X)_{\uk}= X_{\uk(1,i)}  = X_{\uk} \cong  Y_{\uk} = Y_{\uk(1,i)} =(Tr_n Y)_{\uk}\;.
\end{equation*}
If $k_j >1$,
\begin{align*}
    (Tr_n X)_{\uk} & = \pro{X_{\uk(1,i)}}{X^d_{\uk(0,i)}}{k_i}\cong\\
    & \cong \pro{Y_{\uk(1,i)}}{Y^d_{\uk(0,i)}}{k_i}=(Tr_n Y)_{\uk}\;.
\end{align*}
In conclusion
\begin{equation*}
    (Tr_n X)_{\uk}\cong(Tr_n Y)_{\uk}
\end{equation*}
for all $\uk\in\dop{n-1}$. Hence
\begin{equation*}
    Tr_n X \cong Tr_n Y\;.
\end{equation*}
\end{proof}
\subsection{Rigidifying weakly globular Tamsamani $n$-categories}\label{subs-rig-Tam-cat}
In paper \cite{Pa3} the author proved the existence of a rigidification functor
\begin{equation*}
  \Qn:\tawg{n}\rw \catwg{n}
\end{equation*}
approximating each $X\in\tawg{n}$ with an $n$-equivalent $\Qn X\in\catwg{n}$.
\begin{theorem}\cite[Theorem 7.4]{Pa3}\label{the-funct-Qn}
    There is a functor
    \begin{equation*}
        Q_n:\tawg{n} \rw \catwg{n}
    \end{equation*}
    and for each $X\in\tawg{n}$ a morphism in $\tawg{n}$  $s_n(X):Q_n X\rw X$, natural in $X$, such that $(s_n(X))_{k}$ is a $(n-1)$-equivalence for all $k\geq0$. In particular, $s_{n}(X)$ is an $n$-equivalence.
\end{theorem}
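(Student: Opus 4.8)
The plan is to prove the statement by induction on $n$, realizing $Q_n$ as a composite of three functors: a reduction of $\tawg{n}$ onto the subcategory $\lta{n}$, the functor $Tr_n$ of Theorem \ref{the-XXXX}, and the strictification $L_n$ of Theorem \ref{the-strict-funct}. For $n=1$ we have $\tawg{1}=\Cat=\catwg{1}$, so $Q_1=\Id$ and $s_1=\Id$. Assume $Q_{n-1}$ has been constructed with the stated properties; the core of the inductive step is the construction on $\lta{n}$, after which a reduction functor extends it to all of $\tawg{n}$.

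On $\lta{n}$ I would set $Q_n Y := L_n Tr_n Y$ for $Y\in\lta{n}$; by Theorem \ref{the-strict-funct} this lies in $J_n\catwg{n}$ and so defines an object of $\catwg{n}$. The delicate point is to produce a \emph{strict} comparison $s_n(Y):Q_n Y\rw Y$, since $t_n(Y):Tr_n Y\rw Y$ is only pseudo-natural while a morphism of $\tawg{n}$ must be strict. Here I would exploit the adjunction $\St\dashv J$ recalled in \S\ref{sbs-pseudo-functors}: the pseudo-natural transformation $t_n(Y)$ is a morphism $Tr_n Y\rw J(J_n Y)$ in $\psc{n-1}{\Cat}$, and its transpose under $\St\dashv J$ is an honest morphism $L_n Tr_n Y=J_n Q_n Y\rw J_n Y$ in $\funcat{n-1}{\Cat}$, that is a strict morphism $s_n(Y):Q_n Y\rw Y$ in $\tawg{n}$. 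Componentwise this transpose factors, up to canonical isomorphism, as $(Q_n Y)_{\uk}\rw (Tr_n Y)_{\uk}\rw Y_{\uk}$, the first map being the strictification equivalence of Theorem \ref{the-strict-funct} and the second the equivalence $(t_n(Y))_{\uk}$; hence each $(s_n(Y))_{\uk}$ is an equivalence of categories, so $(s_n(Y))_k$ is a levelwise equivalence in $\funcat{n-2}{\Cat}$ and therefore an $(n-1)$-equivalence.

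The remaining task, and what I expect to be the main obstacle, is to extend the construction from $\lta{n}$ to all of $\tawg{n}$ by building a reduction functor $P_n:\tawg{n}\rw\lta{n}$ together with a natural $n$-equivalence relating $P_n X$ and $X$. This is where the inductive hypothesis $Q_{n-1}$ enters: applying $Q_{n-1}$ levelwise to the simplicial object $k\mapsto X_k$ and discretizing $X_0$ along $\zg:X_0\rw X_0^d$ should rigidify the lower-dimensional data so that the three defining conditions of $\lta{n}$ in Definition \ref{def-ind-sub-ltawg} hold — in particular condition (iii), namely $\p{n}(P_n X)\in\catwg{n-1}$, which is exactly what $Q_{n-1}$ is designed to supply, and condition (ii), that the maps $v_k$ become levelwise equivalences. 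Verifying that this levelwise rigidification is compatible with the face and degeneracy maps, preserves membership in $\tawg{n}$, and yields a comparison that is an $n$-equivalence is the hard part; I would check the $\lta{n}$ conditions one at a time, using that $Q_{n-1}$ sends $(n-1)$-equivalences to $(n-1)$-equivalences and commutes suitably with $\p{n-1}$.

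Finally I would set $Q_n:=L_n\circ Tr_n\circ P_n$ and take $s_n(X)$ to be the composite of the reduction $n$-equivalence with the map $s_n(P_n X)$ constructed above. Naturality of $s_n$ follows from the functoriality and naturality of each constituent ($Q_{n-1}$, $P_n$, $Tr_n$, $L_n$, $t_n$, and the adjunction transpose). Each $(s_n(X))_k$ is then an $(n-1)$-equivalence, being a composite of such; the last assertion, that $s_n(X)$ is itself an $n$-equivalence, follows from the standard fact for these Segal-type models that a levelwise $(n-1)$-equivalence is an $n$-equivalence, since both the hom-maps $f(a,b)$ and the truncation $\p{n}f$ in Definition \ref{def-n-equiv} are controlled by the levelwise maps.
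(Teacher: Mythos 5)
Your skeleton --- $Q_n=\St\circ Tr_n\circ P_n$, a strict comparison map obtained from the pseudo-natural $t_n$ via the adjunction $\St\dashv J$, levelwise equivalences of categories giving levelwise $(n-1)$-equivalences, and the conclusion that a levelwise $(n-1)$-equivalence is an $n$-equivalence --- is indeed the route recalled in Section \ref{subs-rig-Tam-cat} from \cite{Pa3}. But the step you yourself flag as the main obstacle, the reduction functor $P_n:\tawg{n}\rw\lta{n}$, is sketched incorrectly, and this is a genuine gap. The paper's $P_n$ neither applies $Q_{n-1}$ levelwise nor discretizes $X_0$: it is the pullback in $\funcat{n-1}{\Cat}$
\begin{equation*}
    \xymatrix@C=50pt@R=30pt{
    P_n X \ar[r] \ar[d] & X \ar^{\zg_{n}}[d] \\
    \di{n}\Qnm\q{n}X \ar_{s_{n-1}(\q{n}X)}[r] & \di{n}\q{n}X
    }
\end{equation*}
so the inductive hypothesis is invoked exactly once, applied to the truncation $\q{n}X$, and $X$ is pulled back along the resulting comparison map embedded discretely in the $n$-th direction. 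This surgically replaces the truncation of $X$ by an object of $\catwg{n-1}$, which is precisely how condition (iii) of Definition \ref{def-ind-sub-ltawg} is secured.

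Your variant fails on two counts. First, ``discretizing $X_0$ along $\zg$'' is not functorial: for a morphism $f:X\rw Y$ there is no compatibility between chosen sections $\zg'(X_0)$ and $\zg'(Y_0)$ --- this is exactly the obstruction explained in Section \ref{subs-intro-discr}, and the reason the paper builds $\ftawg{n}$ in Sections \ref{sec-canonical}--\ref{sec-fta-to-tam} before it can define $\Discn$; moreover $\lta{n}$ does not require $X_0$ to be discrete, so the step is unnecessary as well as unavailable. Second, levelwise rigidification does not deliver condition (iii): setting $\tilde{X}_k=Q_{n-1}X_k$, one has $(\p{n}\tilde{X})_k=\p{n-1}Q_{n-1}X_k$, and the Segal maps of $\p{n}\tilde{X}$ are only $(n-2)$-equivalences (images under $\p{n-1}$ of the induced Segal maps of $\tilde{X}$, which are merely $(n-1)$-equivalences), not isomorphisms; hence $\p{n}\tilde{X}\notin\catwg{n-1}$ in general, since $Q_{n-1}$ rigidifies each level separately but imposes no strict Segal condition jointly in $k$. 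The pullback construction avoids this because $\p{n}P_nX$ is governed by $\Qnm\q{n}X$, which lies in $\catwg{n-1}$ by induction. A minor further point: the induction should start at $n=2$ with $Q_2=\St\circ Tr_2$, using $\lta{2}=\tawg{2}$, rather than at $n=1$.
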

The construction of $\Qn$ is by induction on $n$. When $n=2$, $Q_2$ is the composite
\begin{equation*}
  Q_2:\tawg{2}\xrw{Tr_2} \segpsc{}{\Cat}\xrw{\St}\catwg{2}
\end{equation*}
Suppose, inductively, that we defined $\Qnm$. Define the functor
\begin{equation*}
    P_n:\tawg{n}\rw\lta{n}
\end{equation*}
as follows. Given $X\in\tawg{n}$, consider  the pullback in $\funcat{n-1}{\Cat}$
\begin{equation*}
    \xymatrix@C=50pt@R=30pt{
    P_n X \ar^{w(X)}[r] \ar[d] & X \ar^{\zg_{n}}[d] \\
    \di{n}\Qnm\q{n}X \ar_{s_{n-1}(\q{n}X)}[r] & \di{n}\q{n}X
    }
\end{equation*}
When $n>2$ we define $\Qn$ to be the composite
\begin{equation*}
  \Qn:\tawg{n}\xrw{P_n}\lta{n}\xrw{Tr_n}\segpsc{n-1}{\Cat}\xrw{\St}\catwg{n}
\end{equation*}
where $Tr_n$ is as in Theorem \ref{the-XXXX} and $\St$ lands in $\catwg{n}$ by Theorem \ref{the-strict-funct}.
%%
%%
%%%%%%%%%%%%%%%%%%%%%%%%%%%%%%%%%%%%%%%%%%%%%%%%%%%%%%%%%%%%%
\section{Canonical choices of homotopically discrete objects}\label{sec-canonical}
The goal of this section is to show how to approximate up to $n$-equivalence a weakly globular \nfol category with a better behaved one in which the homotopically discrete object at level 0 admits a canonical choice of section to the discretization map. This will be used in Section \ref{sec-fta} to construct the category $\ftawg{n}$ which will then lead in Section \ref{sec-fta-to-tam} to the discretization functor from $\catwg{n}$ to $\ta{n}$.
\subsection{A general construction}\label{sbs-gen-const}
Let $\clC$ be a category with finite limits; let $X\in\Cat\clC$ and $f_0:Y_0\rw X_0$ be a morphism in $\clC$. There is $X(f_0)\in\Cat\clC$ with $X(f_0)_0=Y_0$ and $X(f_0)_1$ given by the pullback in $\clC$
\begin{equation*}
    \xymatrix{
    X(f_0)_1 \ar^{}[rr] \ar^{}[d] && Y_0\times Y_0 \ar^{f_0\times f_0}[d]\\
    X_1 \ar^{}[rr] && X_0\times X_0
    }
\end{equation*}
Further, for each $k\geq 2$, there is a pullback in $\clC$
\begin{equation*}
    \xymatrix@R=40pt{
    X(f_0)_k=\pro{X(f_0)_1}{Y_0}{k} \ar^{}[rr] \ar^{}[d] && \pro{Y_0}{}{k+1} \ar^{\pro{f_0}{}{k+1}}[d]\\
    X_k=\pro{X_1}{X_0}{k} \ar^{}[rr] && \pro{X_0}{}{k+1}
    }
\end{equation*}
\begin{lemma}\label{lem-gen-const-1}
Let
\begin{equation*}
    \xymatrix{
    A \ar[r] \ar[d] & B \ar^{f}[d]\\
    C \ar_{g}[r] & D
    }
\end{equation*}
be a pullback in $\Cat$ with $f$ an isofibration. Then
\begin{equation*}
    \xymatrix{
    pA \ar[r] \ar[d] & pB \ar^{}[d]\\
    pC \ar_{}[r] & pD
    }
\end{equation*}
is a pullback in $Set$.
\end{lemma}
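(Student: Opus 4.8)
The plan is to show that the canonical comparison map is a bijection. Writing $A=B\times_D C$ for the pullback, the commutativity of the given square together with functoriality of $p$ produces, via the universal property of the pullback in $\Set$, a canonical map
\[
\lambda : pA \rw pB\times_{pD} pC, \qquad [(b,c)]\mapsto ([b],[c]),
\]
where we recall that $p$ sends a category to its set of isomorphism classes of objects and that objects of $A$ are pairs $(b,c)$ with $f(b)=g(c)$. Unlike the situation of Lemma \ref{lem-q-pres-fib-pro}, here the base $D$ need not be discrete, so the isofibration hypothesis on $f$ is exactly the feature that must be exploited. It therefore suffices to prove that $\lambda$ is both surjective and injective.

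First I would treat surjectivity. Let $([b],[c])\in pB\times_{pD}pC$, so that $[f b]=[g c]$ in $pD$, and choose an isomorphism $\delta: f b \rw g c$ in $D$. Since $f$ is an isofibration, $\delta$ lifts to an isomorphism $\beta: b \rw b'$ in $B$ with $f(\beta)=\delta$, whence $f(b')=g c$. Thus $(b',c)$ is an object of $A$, and as $b\cong b'$ we get $\lambda[(b',c)]=([b'],[c])=([b],[c])$. Hence $\lambda$ is surjective; this direction uses the isofibration property in a clean way and presents no real difficulty.

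The essential step is injectivity, which is where I expect the main obstacle to lie. Suppose $(b_1,c_1)$ and $(b_2,c_2)$ are objects of $A$ with $\lambda[(b_1,c_1)]=\lambda[(b_2,c_2)]$, that is $b_1\cong b_2$ in $B$ and $c_1\cong c_2$ in $C$; one must then manufacture a genuine isomorphism $(b_1,c_1)\cong(b_2,c_2)$ in $A$, i.e. a pair $(\mu,\nu)$ of isomorphisms $\mu:b_1\cong b_2$, $\nu:c_1\cong c_2$ with $f(\mu)=g(\nu)$. Fixing an isomorphism $\gamma:c_1\cong c_2$ in $C$ yields an isomorphism $g(\gamma): f b_1 = g c_1 \rw g c_2 = f b_2$ in $D$, which I would lift through the isofibration $f$ to an isomorphism $\beta:b_1\cong b'$ with $f(\beta)=g(\gamma)$ and $f(b')=g c_2$, so that $(\beta,\gamma):(b_1,c_1)\cong(b',c_2)$ is an isomorphism in $A$. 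The delicate point, and indeed the crux of the whole lemma, is to compare $(b',c_2)$ with $(b_2,c_2)$: the isomorphism in $D$ arising from a chosen $b_1\cong b_2$ and the one arising from $\gamma$ need not coincide, and the resulting discrepancy automorphism of $f b_2$ must be absorbed by a further lift along $f$ so that $b'$ and $b_2$ become connected by an isomorphism lying over the identity of $D$. It is precisely at this fiberwise alignment that the isofibration hypothesis does the real work, and carrying it out carefully is the main obstacle; once it is settled, $\lambda$ is injective and hence bijective, giving the desired pullback square in $\Set$.
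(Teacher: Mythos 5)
Your surjectivity argument is correct, and it matches the only part of the universal property that the paper's own proof actually verifies. The genuine gap is exactly at the step you flag and then defer: the ``fiberwise alignment'' cannot be carried out from the stated hypotheses. If $\sigma:b'\cong b_2$ is any isomorphism in $B$, its image $\alpha=f(\sigma)$ is an automorphism of $fb_2$ in $D$; lifting $\alpha^{-1}$ through the isofibration starting at $b_2$ only produces an isomorphism $b_2\cong b''$ over $\alpha^{-1}$, hence an isomorphism $b'\cong b''$ over the identity where $b''$ is some possibly different object of the fiber --- nothing forces $b''=b_2$, and iterating the correction reproduces the same discrepancy. In fact injectivity of your comparison map $\lambda$ is false at this level of generality. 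Take $D$ to be the one-object groupoid with automorphism group $\mathbb{Z}/2=\{1,s\}$, let $B$ be the indiscrete groupoid on two objects $u,v$ with $f:B\rw D$ sending the unique isomorphism $\tau:u\rw v$ to $s$ (one checks directly that $f$ is a functor and an isofibration), and let $C$ be the terminal category with $g$ the inclusion of the object. The strict pullback $A$ is then the discrete category on the two objects $(u,\ast)$ and $(v,\ast)$, because the only candidate isomorphism $\tau$ lies over $s\neq 1=g(\mathrm{id})$; so $pA$ has two elements while $pB\times_{pD}pC$ is a single point, and $\lambda$ is surjective but not injective. Thus no amount of care completes your injectivity step as stated: what actually makes it work in this paper is that wherever the lemma is invoked, the target $D$ of the isofibration is (a product of levels of) homotopically discrete objects, i.e.\ equivalence relations, whose objects have trivial automorphism groups; there $f(\sigma)$ is forced to be an identity, $(\sigma,\mathrm{id}_{c_2})$ is already a morphism of $A$, and your argument closes immediately.

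For comparison, the paper takes a different route: since $f$ is an isofibration, $A$ is equivalent to the pseudo-pullback $C\tms{ps}{D}B$ by the Joyal--Street result, and the proof then asserts that $p$ sends pseudo-pullbacks to pullbacks. But the verification given there only constructs a factorization of a cone through $p(C\tms{ps}{D}B)$ --- the analogue of your surjectivity --- and omits the uniqueness half of the universal property, which is the same injectivity issue; indeed in the example above $p(C\tms{ps}{D}B)$ also has two elements while $pC\times_{pD}pB$ has one. So your diagnosis that injectivity is the crux is accurate and, in a sense, sharper than the paper's own treatment; the defect of your proposal is the unsupported assertion that the isofibration hypothesis alone absorbs the discrepancy automorphism, when an additional hypothesis (triviality of automorphisms in $D$, as in the homotopically discrete situations where the lemma is applied) is what really does the work.
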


\begin{proof}
Since $f$ is an isofibration, by \cite{Js} $A$ is equivalent to the pseudo-pullback $\ds A \simeq C \tms{ps}{D} B$. The functor $p$ sends pseudo-pullbacks to pullbacks. In fact, suppose we are given a commuting diagram in $\Set$
\begin{equation*}
    \xymatrix{
    X \ar@/_/[ddr]_r \ar@/^/[drr]^s \\
    & p(\ds{C\tms{ps}{D} B}) \ar[d] \ar[r]  & pB \ar^{pf}[d] \\
    & pC \ar_{pg}[r] & pD }
\end{equation*}
$(pg)_r =(pf)_s$. If we choose maps $b:dpB\rw B$ and $c:dpC\rw C$ we then have $fdbs \cong gdcr$. Therefore, there is $\ds v:X \rw C \tms{ps}{D} B$ such that $p_1v=bs$ and $p_2 v=cr$. Hence
\begin{equation*}
    p(p_1)p(v)=p(b)p(s)=p(s),\qquad p(p_2)p(v)=p(c)p(r)=p(r)\;.
\end{equation*}
This shows that
\begin{equation*}
    p(\ds{C\tms{ps}{D} B})=pC\tiund{pD}pB\;.
\end{equation*}
\end{proof}
\begin{lemma}\label{lem-gen-const-2}
    Let
    \begin{equation*}
        \xymatrix{
         A \ar^{s}[r] \ar_{r}[d] & B \ar^{f}[d]\\
    C \ar_{g}[r] & D
        }
    \end{equation*}
    be a pullback in $\Cat$, and suppose that $f$ is fully faithful. Then so is $r$.
\end{lemma}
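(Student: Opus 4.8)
The plan is to exploit the explicit description of pullbacks in $\Cat$. Both the object-set and morphism-set functors $\Cat\rw\Set$ preserve limits, so the pullback $A$ is computed componentwise: I would describe $A$ as the category whose objects are pairs $(c,b)$ with $c\in C$, $b\in B$ and $g(c)=f(b)$, and whose morphisms $(c,b)\rw(c',b')$ are pairs $(\zg,\zb)$ with $\zg:c\rw c'$ in $C$, $\zb:b\rw b'$ in $B$, and $g(\zg)=f(\zb)$ in $D$. Under this identification the functor $r$ is the first projection, sending $(c,b)\mapsto c$ and $(\zg,\zb)\mapsto\zg$.

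With this setup, I would fix objects $(c,b),(c',b')$ of $A$ and study the map on hom-sets
\begin{equation*}
\Hom_A\big((c,b),(c',b')\big)\rw\Hom_C(c,c'),\qquad (\zg,\zb)\mapsto\zg,
\end{equation*}
showing it is a bijection. For faithfulness, suppose $(\zg,\zb)$ and $(\zg,\zb')$ both lie over the same $\zg$. Then $f(\zb)=g(\zg)=f(\zb')$, and since $f$ is faithful this forces $\zb=\zb'$, whence the two morphisms of $A$ coincide.

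For fullness, given an arbitrary $\zg:c\rw c'$ in $C$, the object-level compatibility conditions $f(b)=g(c)$ and $f(b')=g(c')$ are exactly what guarantee that $g(\zg)$ is a morphism $f(b)\rw f(b')$ in $D$ with the correct source and target. Fullness of $f$ then yields a (necessarily unique, by the above) $\zb:b\rw b'$ in $B$ with $f(\zb)=g(\zg)$, and the pair $(\zg,\zb)$ is a morphism of $A$ mapping to $\zg$. Hence $r$ is full, and combined with the previous step, fully faithful.

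The argument is essentially a transcription of the definition of \emph{fully faithful} through the componentwise pullback description, so there is no genuine obstacle; the only point requiring care is the bookkeeping of the compatibility condition on objects, since it is precisely the equality $f(b)=g(c)$ that legitimizes applying the fullness of $f$ to $g(\zg)$.
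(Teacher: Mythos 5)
Your proof is correct and is essentially the paper's own argument written out elementwise: the paper notes that for $x,y\in A_0$ the hom-set $A(x,y)$ is the pullback of $C(rx,ry)\rw D(fsx,fsy)\lw B(sx,sy)$ and that pulling back the bijection $B(sx,sy)\rw D(fsx,fsy)$ (i.e.\ full faithfulness of $f$) yields a bijection $A(x,y)\rw C(rx,ry)$. Your explicit description of morphisms of $A$ as pairs $(\zg,\zb)$ with $g(\zg)=f(\zb)$ is precisely this hom-set pullback, with faithfulness of $f$ giving injectivity and fullness giving surjectivity, so the two proofs coincide in substance.
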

\begin{proof}
For all $x,y\in A_0$ there is a pullback in $\Cat$
\begin{equation*}
    \xymatrix{
    A(x,y) \ar[rr] \ar[d] && B(sx,sy) \ar[d]\\
    C(rx.ry) \ar[rr] && D(grx,gry)=D(fsx,fsy)
    }
\end{equation*}
Since the right vertical map is an isomorphism (as $f$ is fully faithful), so is the left vertical map, showing that $r$ is fully faithful.
\end{proof}
\begin{lemma}\label{lem-gen-const-3}
    Let
\begin{equation*}
    \xymatrix{
    P \ar[r] \ar[d] & C \ar^{f}[d]\\
    A \ar_{s}[r] & B
    }
\end{equation*}
be a pullback in $\Cat$ with $f$ an isofibration and with $A,B,C \in \cathd{}$. Then $P\in\cathd{}$.
\end{lemma}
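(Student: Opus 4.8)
The plan is to prove that $P$ is equivalent to a discrete category: once $P$ is seen to be a groupoid, this is exactly the condition for $P$ to be an equivalence relation, i.e.\ to lie in $\cathd{}$. Recall that the objects of $\cathd{}$ are precisely the thin groupoids --- groupoids with at most one arrow between any two objects --- equivalently the groupoids $G$ for which the canonical functor $\gamma_G:G\rw pG$ to the discrete category on the set $pG$ of isomorphism classes is an equivalence of categories.

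First I would note that $P$ is a groupoid. Limits in $\Cat$, and in particular the pullback $P$, are computed levelwise on objects and on arrows, so an arrow of $P$ is a compatible pair $(\alpha,\gamma)$ with $\alpha$ an arrow of $A$, $\gamma$ an arrow of $C$ and $s(\alpha)=f(\gamma)$ in $B$; as $A,B,C$ are groupoids, $(\alpha^{-1},\gamma^{-1})$ is again compatible and inverts $(\alpha,\gamma)$. Since $P$ is then a groupoid, $p$ agrees with $q$ on it and I may argue with $p$ throughout.

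Next, because $f$ is an isofibration, I would apply Lemma \ref{lem-gen-const-1} to the given square to obtain a pullback of sets $pP\cong pA\tiund{pB}pC$; in particular $pP$ is discrete. It then remains to show that the canonical functor $\gamma_P:P\rw pP$ is an equivalence, i.e.\ full, faithful and essentially surjective. Faithfulness and fullness come from thinness of $A,B,C$: an arrow of $P$ is determined by its endpoints, and if $(a,c),(a',c')\in P_0$ have equal image in $pP$ then the unique arrows $\alpha:a\rw a'$ in $A$ and $\gamma:c\rw c'$ in $C$ automatically satisfy $s(\alpha)=f(\gamma)$, both being arrows $s(a)\rw s(a')$ in the thin category $B$, and so assemble into an arrow of $P$.

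The hard part will be essential surjectivity, and this is exactly where the isofibration hypothesis is needed. Given a class in $pP\cong pA\tiund{pB}pC$ represented by $a\in A_0$ and $c\in C_0$ with $[s(a)]=[f(c)]$ in $pB$, I would choose an isomorphism $s(a)\cong f(c)$ in $B$ and lift it along the isofibration $f$ to an isomorphism $c\cong c'$ in $C$ with $f(c')=s(a)$; then $(a,c')\in P_0$ represents the given class. With $\gamma_P$ thus an equivalence, $P$ is a groupoid equivalent to the discrete category $pP$, hence $P\in\cathd{}$. (In fact thinness of $P$ can be read off directly from the levelwise description of $P_1\rw P_0\times P_0$, so the hypotheses enter only through this lifting step and the identification of $pP$ via Lemma \ref{lem-gen-const-1}.)
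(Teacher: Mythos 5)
Your proof is correct, but it takes a genuinely different route from the paper's. The paper disposes of the lemma in two lines via pseudo-pullbacks: since $f$ is an isofibration, the Joyal--Street result already invoked in Lemma \ref{lem-gen-const-1} gives $P\simeq A\tms{ps}{B}C$, and since pseudo-pullbacks are invariant under replacing the cospan by an equivalent one, $A\simeq A^d$, $B\simeq B^d$, $C\simeq C^d$ yield $P\simeq A^d\tiund{B^d}C^d$, a discrete category, whence $P\in\cathd{}$. You instead verify by hand that $P$ is a thin groupoid whose discretization functor is an equivalence. Two remarks on your version. First, essential surjectivity of $\gamma_P:P\rw d\,pP$ is automatic for \emph{any} groupoid $P$: every class in $pP$ is by definition the image of an object of $P$. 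So your lifting step is not needed for the conclusion; what it actually establishes is surjectivity of the comparison map $pP\rw pA\tiund{pB}\,pC$, i.e.\ it re-derives (half of) Lemma \ref{lem-gen-const-1} in this special case rather than contributing to $P\in\cathd{}$. Second, as your own description of $P_1\rw P_0\times P_0$ makes plain, the bare membership $P\in\cathd{}$ already follows from thinness and groupoidality of $A$ and $C$ alone, whatever $B$ and $f$ are; the isofibration hypothesis (which the paper's argument does use, and which your argument uses only in the dispensable step) really earns its keep in identifying the homotopy type of $P$ via $pP\cong pA\tiund{pB}\,pC$, which is how the lemma is exploited in the sequel. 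Both arguments are sound: the paper's buys brevity and a pattern that iterates to the $n$-fold analogues (Lemmas \ref{lem-gen-const-4} and \ref{lem-gen-const-6}), while yours makes the combinatorics explicit and isolates exactly where each hypothesis enters.
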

\begin{proof}
Since $f$ is an isofibration and $A\simeq A^d$, $B\simeq B^d$, $C\simeq C^d$, we have
\begin{equation*}
    P\simeq A \tms{ps}{B}C \simeq A^d\tiund{B^d} C^d
\end{equation*}
and therefore $P\in\cathd{}$.
\end{proof}
\begin{lemma}\label{lem-gen-const-4}
        Let
\begin{equation*}
    \xymatrix{
    P \ar[r] \ar_{h}[d] & C \ar^{f}[d]\\
    A \ar_{g}[r] & B
    }
\end{equation*}
be a pullback in $\funcat{n-1}{\Cat}$ with $A,B,C \in \catwg{n}$ and $f$ an $n$-equivalence which is a levelwise isofibration in $\Cat$ and the same holds for $p\up{r,n}f$ for all $1\leq r\leq n$. Then $h$ is an $n$-equivalence.
\end{lemma}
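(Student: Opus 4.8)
The plan is to argue by induction on $n$, verifying directly the two defining conditions of Definition \ref{def-n-equiv} for $h$ to be an $n$-equivalence. For the base case $n=1$ the square lives in $\Cat$, and $f$ is an equivalence of categories which is an isofibration; since such maps (the trivial fibrations for the canonical structure on $\Cat$) are stable under pullback, the projection $h$ is again an equivalence of categories, i.e.\ a $1$-equivalence. For $n>1$ I would first record that $P\in\catwg{n}$: weak globularity $P_0\in\cathd{n-1}$ follows from the levelwise version of Lemma \ref{lem-gen-const-3} applied to $P_0=A_0\tiund{B_0}C_0$ along the isofibration $f_0$, the Segal condition follows from Lemma \ref{lem-char-obj-II}, and the induced-Segal and truncation data are inherited by applying Lemma \ref{lem-gen-const-1} at each truncation stage.

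For condition (ii), I would show that $\p{n}$ carries the given square to a pullback square in $\catwg{n-1}$. Pullbacks in $\funcat{n-1}{\Cat}$ are computed levelwise, and since $f$ is a levelwise isofibration, Lemma \ref{lem-gen-const-1} shows that applying $p$ levelwise preserves each such pullback; translating through the commuting square for $\p{n}$ in Definition \ref{def-n-equiv} gives $\p{n}P\cong \p{n}A\tiund{\p{n}B}\p{n}C$, with $\p{n}h$ the pullback of $\p{n}f$. Now $\p{n}f=p\up{1,n}f$ is an $(n-1)$-equivalence (because $f$ is an $n$-equivalence) and, by hypothesis, a levelwise isofibration whose further truncations $p\up{r,n-1}(\p{n}f)=p\up{r+1,n}f$ are again $(n-1)$-equivalences and levelwise isofibrations. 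Thus the inductive hypothesis applies to this new pullback and yields that $\p{n}h$ is an $(n-1)$-equivalence.

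For condition (i), fix $a,b\in P_0^d$. Applying Lemma \ref{lem-gen-const-1} levelwise (and iteratively, over all truncations) to the isofibration $f_0$ gives $P_0^d\cong A_0^d\tiund{B_0^d}C_0^d$, so $a,b$ have components $a=(a_A,a_C)$, $b=(b_A,b_C)$ over common images $a_B,b_B\in B_0^d$, with $ha=a_A$ and $hb=b_A$. Since $P_1=A_1\tiund{B_1}C_1$ and taking fibers over a discrete object commutes with this pullback, one identifies
\[
  P(a,b)\;\cong\;A(a_A,b_A)\tiund{B(a_B,b_B)}C(a_C,b_C),
\]
with $h(a,b)$ the projection onto the first factor, i.e.\ the pullback of $f(a_C,b_C):C(a_C,b_C)\rw B(a_B,b_B)$. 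This last map is an $(n-1)$-equivalence (as $f$ is an $n$-equivalence) and a levelwise isofibration, and its truncations inherit the corresponding properties from those of $f$; hence the inductive hypothesis shows $h(a,b)$ is an $(n-1)$-equivalence for every $a,b$. Together with the previous paragraph this gives that $h$ is an $n$-equivalence.

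The hard part will be the bookkeeping that propagates the \emph{full} hypothesis — not merely that $f$ is a levelwise isofibration and an $n$-equivalence, but that all its truncations $p\up{r,n}f$ are as well — through both reductions, so that the inductive hypothesis is legitimately applicable. Concretely, the all-truncations assumption is exactly what lets us invoke Lemma \ref{lem-gen-const-1} at every stage of the discretization $(-)^d$ and of $\p{n}$, so that both $\p{n}$ and the hom-construction $X\mapsto X(a,b)$ preserve the relevant pullbacks. The remaining delicate point is to check that the restricted and truncated maps $f(a_C,b_C)$ and $\p{n}f$ still satisfy the isofibration-and-equivalence conditions on \emph{all} their further truncations; this rests on the compatibility of the truncation functors $p\up{r}$ with passage to homs and with the levelwise isofibration property, which I would verify as the one genuinely technical step.
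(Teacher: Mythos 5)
Your proposal is correct in substance and follows the paper's overall skeleton (induction on $n$; base case via pullback-stability of equivalence-isofibrations, which is exactly the paper's pseudo-pullback argument; condition (i) via the identification of $P(a,b)$ as a pullback of hom-objects over the Set-level pullback $P_0^d\cong A_0^d\tiund{B_0^d}C_0^d$, then the inductive hypothesis). Where you genuinely diverge is in how the second half of the $n$-equivalence condition is closed: the paper never proves that $\p{n}h$ is an $(n-1)$-equivalence. Instead it observes that $p\up{1,n}$ of the square is a pullback in $\Set$, that $p\up{1,n}f$ is surjective because $f$ is an $n$-equivalence, hence $p\up{1,n}h$ is surjective (surjections are pullback-stable in $\Set$), and then invokes Proposition 4.11 of \cite{Pa3} — a criterion saying that local $(n-1)$-equivalences on homs together with surjectivity of $p\up{1,n}$ suffice. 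Your route — truncating the whole square with $\p{n}$, checking via Lemma \ref{lem-gen-const-1} that it remains a pullback, and re-applying the induction to $\p{n}f$ — is valid precisely because the hypothesis supplies the isofibration property for all truncations $p\up{r,n}f$, but it is more expensive: you must propagate the full inductive package (equivalence, levelwise isofibration, and all further truncations thereof) through $\p{n}$, whereas the paper's criterion only needs a single surjection of sets, which is robust under pullback. Your approach buys self-containedness (you verify Definition \ref{def-n-equiv} directly, and you also record $P\in\catwg{n}$, which the paper silently omits); the paper's buys brevity by concentrating the delicacy in one citation. Two small remarks: your identification ``$\p{n}f=p\up{1,n}f$'' conflicts with the paper's indexing, where $p\up{1,n}$ denotes the composite truncation all the way to $\Set$ and the one-step truncation is $\p{n}$ — the surrounding argument makes clear what you mean, but the notation should be fixed. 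And the technical step you flag at the end (that $f(a_C,b_C)$ and its truncations inherit the levelwise-isofibration hypothesis) is indeed the real content to check; note that the paper is no more rigorous here, disposing of it with ``By hypothesis, this satisfies the induction hypothesis,'' so your explicit acknowledgment, with the observation that isofibrations restrict to fibers over discrete objects and that $p$ commutes with such fibers, if written out, would actually improve on the published argument.
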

\begin{proof}
By induction on $n$. When $n=1$, since $f$ is an isofibration, $P$ is equivalent to the pseudo-pullback $A\tms{ps}{B}C$; since $f$ is an equivalence of categories, the latter is equivalent to $A$.

Suppose, inductively, that the lemma holds for $(n-1)$. By hypothesis, there is a pullback in $\Cat$
\begin{equation*}
    \xymatrix{
    p\up{2,n}P \ar[rr] \ar[d] && p\up{2,n}C \ar[d]\\
    p\up{2,n}A \ar[rr] && p\up{2,n}B
    }
\end{equation*}
and therefore, at object level, a pullback in $\Set$
\begin{equation*}
    \xymatrix{
    P_0^d \ar[r] \ar_{}[d] & C_0^d \ar^{}[d]\\
    A_0^d \ar_{}[r] & B_0^d
    }
\end{equation*}
Let $(a,c),(a',c')\in P_0^d$. Then there is a pullback in $\catwg{n-1}$
\begin{equation*}
    \xymatrix{
    P((a,c),(a',c')) \ar[rr] \ar_{h((a,c),(a',c'))}[d] && C(c,c') \ar^{}[d]\\
    A(a,a') \ar_{}[rr] && B(ga,ga')
    }
\end{equation*}
By hypothesis, this satisfies the induction hypothesis and thus $h((a,c),(a',c'))$ is a $(n-1)$-equivalence.

We also have the pullback in $\Set$
\begin{equation*}
    \xymatrix{
    p\up{1,n}P \ar[rr] \ar_{p\up{1,n}h}[d] && p\up{1,n}C \ar^{p\up{1,n}f}[d]\\
    p\up{1,n}A \ar[rr] && p\up{1,n}B
    }
\end{equation*}
Since $f$ is a $n$-equivalence, $p\up{1,n}f$ is surjective, therefore such is $p\up{1,n}h$. By Proposition 4.11 of \cite{Pa3} we conclude that $h$ is an $n$-equivalence.
\end{proof}
\begin{proposition}\label{pro-gen-const-1}
    Let $X\in\catwg{n}$ and $f_0:Y_0\rw X_0$ be a morphism in $\cathd{n-1}$ such that, for each $1\leq r\leq n$, $J_{n-1}f_0$ and $J_r p\up{n,r}f_0$ is a levelwise isofibration in $\Cat$ which is surjective on objects. Then
    \begin{itemize}
      \item [a)] $X(f_0)\in\catwg{n}$;\mk

      \item [b)] $V(X):X(f_0)\rw X$ is an $n$-equivalence;\mk

      \item [c)] if $X\in\cathd{n}$, $X(f_0)\in\cathd{n}$.
    \end{itemize}
\end{proposition}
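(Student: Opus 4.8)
The plan is to prove the three statements simultaneously by induction on $n$. The base case $n=1$ is direct: here $X\in\Cat$ and $f_0$ is a surjection of sets, so $X(f_0)\in\Cat=\catwg{1}$; the comparison functor $V(X):X(f_0)\rw X$ is fully faithful by Lemma \ref{lem-gen-const-2} (its hom-objects are pulled back) and essentially surjective because $f_0$ is onto, giving (b); and (c) is exactly Lemma \ref{lem-gen-const-3}. For $n>1$ I would regard $X$ as a simplicial object in $\catwg{n-1}$ and analyse $X(f_0)$ degreewise, feeding the inductive hypotheses for (a)--(c) into the four preparatory lemmas.

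For part (a) the Segal condition $X(f_0)_k\cong\pro{X(f_0)_1}{Y_0}{k}$ holds by construction and weak globularity is the hypothesis $Y_0\in\cathd{n-1}$. The substance is that $X(f_0)$ is a simplicial object in $\catwg{n-1}$ and satisfies the induced-Segal and truncation conditions. Membership $X(f_0)_1\in\catwg{n-1}$ I would read off the defining pullback $X(f_0)_1=X_1\tiund{X_0\times X_0}(Y_0\times Y_0)$ along the object-surjective isofibration $f_0\times f_0$: its Segal condition comes from Lemma \ref{lem-char-obj-II}, its object-of-objects is homotopically discrete by Lemma \ref{lem-gen-const-3} and its higher-dimensional analogue (closure of the homotopically discrete objects under pullback along object-surjective isofibrations), and its induced-Segal and truncation conditions follow from Lemma \ref{lem-gen-const-1} together with the induction; the higher $X(f_0)_k$ are iterated fibre products of $X(f_0)_1$ over $Y_0$ and stay in $\catwg{n-1}$ for the same reasons. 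For the truncation functor I would use Lemma \ref{lem-gen-const-1} (applied to $f_0\times f_0$ and to the truncated maps $p\up{n,r}f_0$) to identify $\p{n}X(f_0)$ with $(\p{n}X)(f_0')$, where $f_0'$ is the induced truncation of $f_0$; this lies in $\catwg{n-1}$ by the inductive hypothesis applied to $\p{n}X$ and $f_0'$, whose hypotheses are exactly the conditions imposed on the truncations $p\up{n,r}f_0$.

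For part (b), condition (ii) --- that $\p{n}V(X)$ is an \equ{n-1} --- reduces through the identification $\p{n}X(f_0)=(\p{n}X)(f_0')$ above to the inductive hypothesis (b) for $\p{n}X$. Condition (i) --- that each fibre map $X(f_0)(a,b)\rw X(Va,Vb)$ is an \equ{n-1} --- is where Lemma \ref{lem-gen-const-4} enters: since $X(f_0)_1$ is pulled back from $X_1$, the fibre $X(f_0)(a,b)$ is obtained from $X(Va,Vb)=X(f_0^d a,f_0^d b)$ by pulling back along the map of discretisation-fibres $Y_0(a)\times Y_0(b)\rw X_0(f_0^d a)\times X_0(f_0^d b)$; as $Y_0,X_0\in\cathd{n-1}$ these fibres are each \equ{n-1} to a point, so this map is an \equ{n-1} and a levelwise isofibration, and Lemma \ref{lem-gen-const-4} (at level $n-1$) yields that the fibre map is an \equ{n-1}. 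Part (c) then follows by applying Lemma \ref{lem-gen-const-3} and its higher analogue, inductively, levelwise to the pullback squares defining the $X(f_0)_k$.

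The main obstacle I anticipate is twofold. Conceptually it is the fibre identification $X(f_0)(a,b)\simeq X(Va,Vb)$, which rests on the fact that the fibres of the discretisation map $\zg:Y_0\rw Y_0^d$ of a homotopically discrete object are themselves \equ{n-1} to a point; making this precise in each dimension, so that Lemma \ref{lem-gen-const-4} applies, is the crux. Technically it is the bookkeeping: one must propagate the isofibration-and-surjectivity hypotheses on $f_0$ and on all of its truncations $p\up{n,r}f_0$ correctly through the induction so that Lemmas \ref{lem-gen-const-1}, \ref{lem-gen-const-3} and \ref{lem-gen-const-4} remain applicable at every stage, while simultaneously checking the induced-Segal and truncation conditions, which are interdependent.
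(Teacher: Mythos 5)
Your overall architecture matches the paper's: a simultaneous induction on $n$ over (a)--(c), the identification of the truncation $\ovl{p}J_nX(f_0)\cong(\p{n}X)(\p{n-1}f_0)$ via Lemma \ref{lem-gen-const-1}, and for (b)(i) the identical fibre analysis (the map $Y_0(a)\rw X_0(f_0a)$ is an $(n-1)$-equivalence of homotopically discrete objects with the same discretization, and a levelwise isofibration, so Lemma \ref{lem-gen-const-4} applies); your handling of (b)(ii) through the truncation identification is a harmless variant of the paper's appeal to Proposition 4.11 of \cite{Pa3} plus surjectivity of $p\up{1,n}V(X)$. However, two steps in your outline have genuine gaps. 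First, both in (a) (homotopical discreteness of the objects-of-objects column of $X(f_0)$) and in (c) you invoke an unproved ``higher-dimensional analogue of Lemma \ref{lem-gen-const-3}'', namely closure of $\cathd{n-1}$ under pullback along levelwise object-surjective isofibrations. No such closure lemma is available in the paper (Lemma \ref{lem-gen-const-3} is the $1$-dimensional case only), and it is not an instance of your inductive hypothesis, whose clause (c) covers only pullbacks of the special shape $X(f_0)$. The paper avoids needing it twice over: for the column it observes that the square \eqref{eq3-gen-const} exhibits $X(f_0)_{\bl 0}$ as $(X_{\bl 0})(f_{00})$ --- literally another instance of the construction, with $X_{\bl 0}\in\cathd{n-1}$ --- so the inductive hypothesis (c) applies directly; and part (c) itself is not proved levelwise at all, but deduced from (a) and (b) together with the recognition principle of \cite{Pa2} (Propositions 3.10 and 3.16): an object of $\catwg{n}$ that is $n$-equivalent to a homotopically discrete object is homotopically discrete.

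Second, in (a) you assert that the induced Segal condition ``follows from Lemma \ref{lem-gen-const-1} together with the induction'', but this is the crux of membership in $\catwg{n}$ and your outline supplies no mechanism for it: the plain Segal maps of $X(f_0)$ are isomorphisms by construction, but showing that $X(f_0)_k\rw\pro{X(f_0)_1}{Y_0^d}{k}$ is an $(n-1)$-equivalence is not a formal consequence of the degreewise analysis. The paper does not verify this condition directly either; instead it checks the hypotheses of the membership criterion Proposition 3.16 of \cite{Pa2} --- namely $X(f_0)\in\cat{n}$, $X(f_0)_0$ and $X(f_0)_{\bl 0}$ homotopically discrete, and $\ovl{p}J_nX(f_0)\in\catwg{n-1}$, the last obtained by the levelwise computation with Lemma \ref{lem-gen-const-1} and the inductive hypothesis (a) --- and that criterion packages the induced Segal condition. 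Some such external input (or a proof of your missing closure lemma) must be supplied for your argument to close; as written, the interdependence you flag at the end is precisely where the proof is incomplete.
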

\begin{proof}
By induction on $n$. Let first $n=2$. Since $Y_0\in\cathd{}$, to show that $X(f_0)\in\catwg{2}$ it is enough to show (by Lemma 3.14 of \cite{Pa2}) that $\ovl{p}X(f_0)\in\Cat$. That is, for each $k\geq 2$
\begin{equation*}
    p(X(f_0))_k=\pro{p(X(f_0))_1}{p(X(f_0))_0}{k}\;.
\end{equation*}
We show this for $k=2$, the case $k>2$ being similar.
From the general construction \ref{sbs-gen-const},
\begin{equation*}
    X(f_0)_2=(\tens{X_1}{X_0})\tiund{X_0\times X_0\times X_0} (Y_0\times Y_0\times Y_0)\;.
\end{equation*}
Since $f_0$ is an isofibration, using Lemma \ref{lem-gen-const-1}, the fact that $p\up{2}X\in\Cat$ and the fact that $p$ preserves products, we obtain
\begin{align*}
    &pX(f_0)_2=p(\tens{X_1}{X_0})\tiund{p(X_0\times X_0\times X_0)} p(Y_0\times Y_0\times Y_0)=\\
    =&(\tens{pX_1}{pX_0})\tiund{pX_0\times pX_0\times pX_0} pY_0\times pY_0\times pY_0\;.
\end{align*}
On the other hand,
\begin{align*}
    & \ \tens{pX(f_0)_1}{pX(f_0)_0}=\\
    = & \ \tens{(pX_1\tiund{\tens{pX_0}{}}(\tens{pY_0}{}))}{pX_0\tiund{pX_0}pY_0}=\\
    = & \ (\tens{pX_1}{pX_0})\tiund{\tens{(\tens{pX_0}{}){}} {pY_0}}\tens{(\tens{pY_0}{}){}}{pY_0}=\\
    = & \ p(\tens{X_1}{X_0})\tiund{pX_0\times pX_0\times pX_0}(pY_0\times pY_0\times pY_0)\;.
\end{align*}
Therefore
\begin{equation*}
    pX(f_0)_2=\tens{pX(f_0)_1}{pX(f_0)_0}\;.
\end{equation*}
The case $k>2$ is similar. This shows $X(f_0)\in\catwg{2}$, proving a) when $n=2$.

We now show that $X(f_0)\rw X$ is a 2-equivalence. Let $a,b \in Y_0^d$. We have a pullback in $\Cat$
\begin{equation}\label{eq1-gen-const}
    \xymatrix{
    X(f_0)(a,b) \ar[rr] \ar_{V(X)(a,b)}[d] && Y_0(a)\times Y_0(b) \ar[d]\\
    X_1(f_0 a,f_0 b) \ar[rr] && X_0(f_0 a)\times X_0(f_0 b)
    }
\end{equation}
Since $X_0,Y_0\in\cathd{}$, $Y_0(a)\rw X_0(a)$ is an equivalence of categories hence it is in particular fully faithful. Applying Lemma \ref{lem-gen-const-1} to \eqref{eq1-gen-const} we obtain a pullback in $\Set$
\begin{equation}\label{eq2-gen-const}
    \xymatrix{
   pX(f_0)(a,b) \ar[rr] \ar_{}[d] && pY_0(a)\times pY_0(b) \ar[d]\\
    pX_1(f_0 a,f_0 b) \ar[rr] && pX_0(f_0 a)\times pX_0(f_0 b)
    }
\end{equation}
Since, by hypothesis, $Y_0\rw X_0$ is surjective on objects, the right vertical map in \eqref{eq2-gen-const} is surjective, therefore such is the left vertical map in \eqref{eq2-gen-const}.
Thus $X(f_0)(a,b) \rw X_1(f_0 a,f_0 b)$ is essentially surjective on objects and in conclusion it is an equivalence of categories.

To show that $V(X):X(f_0)\rw X$ is a 2-equivalence, it is enough to show (by Proposition 4.11 of \cite{Pa3}) that $pp\up{2}V(X)$ is surjective. This follows from the fact that $pf_0$ is surjective (since by hypothesis $f_0$ is surjective on objects), so that $p\up{2}V(X)$ is surjective on objects. This concludes the proof of b) in the case $n=2$.

Finally, if $X\in\cathd{2}$, since by a)  $X(f_0)\in\catwg{2}$ and $V(X)$ is a 2-equivalence, it follows from Proposition 3.10 of \cite{Pa2} that $X(f_0)\in\cathd{2}$, proving c) in the case $n=2$.

Suppose, inductively, that the proposition holds for $(n-1)$, let $X\in\catwg{n}$ and $f_0$ be as in the hypothesis.\bk

a) We show that $X(f_0)\in\catwg{n}$ by proving that it satisfies the hypothesis of Proposition 3.16 of  \cite{Pa2}. By the general construction \ref{sbs-gen-const}, $X(f_0)\in\cat{n}$ and $(X(f_0))_0=Y_0\in\cathd{n-1}$. Since $X\in\catwg{n}$, $X_{\bl 0}\in\cathd{n-1}$ and we have a pullback in $\funcat{n-2}{\Cat}$
\begin{equation}\label{eq3-gen-const}
    \xymatrix{
    X(f_0)_{10} \ar[rr] \ar[d] && Y_{00}\times Y_{00}\ar^{f_{00}\times f_{00}}[d]\\
    X_{10} \ar[rr] && X_{00}\times X_{00}
    }
\end{equation}
where, by hypothesis, $J_{n-1}f_{00}$ and $J_r p\up{n-2,r} f_{00}$ is levelwise an isofibration in $\Cat$, which is surjective on objects. Thus \eqref{eq3-gen-const} satisfies the induction hypothesis c) and we conclude that $(X(f_0))_{\bl 0}\in\cathd{n-1}$. In particular, $(X(f_0))_{s0}\in\cathd{n-2}$. It remains to show that $\ovl{p}J_n X(f_0)\in H\lo{n-1}\catwg{n-1}$. Let $\ur\in \dop{n-2}$; by the construction \ref{sbs-gen-const},
\begin{equation*}
    (X(f_0))_{2\ur}=  \{\tens{X_{1\ur}}{X_{0\ur}}\}\tiund{X_{0\ur}\times X_{0\ur}\times X_{0\ur}} \{Y_{0\ur}\times Y_{0\ur}\times Y_{0\ur}\}\;.
\end{equation*}
Since, by hypothesis, $(f_0)_{\ur}$ is an isofibration, by Lemma \ref{lem-gen-const-1} we obtain
\begin{equation*}
    p(X(f_0))_{2\ur}= p \{\tens{X_{1\ur}}{X_{0\ur}}\}\tiund{p \{X_{0\ur}\times X_{0\ur}\times X_{0\ur}\}} p\{Y_{0\ur}\times Y_{0\ur}\times Y_{0\ur}\}\;.
\end{equation*}
Since $X\in \catwg{n}$, $\p{n}X \in \catwg{n-1}$ and since $(\p{n-1}X_j)_{\ur}=pX_{j\ur}$ we have
\begin{equation*}
    p \{\tens{X_{1\ur}}{X_{0\ur}}\}=\tens{p(X_{1\ur})}{p(X_{0\ur})}
\end{equation*}
As $p$ commutes with products, we obtain
\begin{equation*}
    p(X(f_0))_{2\ur}=\{\tens{pX_{1\ur}}{pX_{0\ur}}\}\tiund{pX_{0\ur}\times pX_{0\ur}\times pX_{0\ur}}\{pY_{0\ur}\times pY_{0\ur}\times pY_{0\ur}\}\;.
\end{equation*}
Since this holds for all $\ur$, it follows that
\begin{equation*}
    \ovl{p}X(f_0)_2=\tens{\ovl{p}X(f_0)_1}{\ovl{p}X(f_0)_0}\;.
\end{equation*}
Similarly for each $k>2$,
\begin{equation*}
    \ovl{p}X(f_0)_k=\pro{\ovl{p}X(f_0)_1}{\ovl{p}X(f_0)_0}{k}\;.
\end{equation*}
We conclude that
\begin{equation*}
    \ovl{p}J_n X(f_0)=(\p{n}X)(\p{n-1}f_0)\;.
\end{equation*}
The pullback
\begin{equation*}
    \xymatrix{
    (\p{n}X)(\p{n-1}f_0) \ar[rr]\ar[d] && \p{n-1}Y_0\times \p{n-1}Y_0 \ar[d]\\
    \p{n-1}X_1 \ar[rr] && \p{n-1}X_0\times \p{n-1}X_0
    }
\end{equation*}
satisfies the inductive hypothesis a) and we therefore conclude that $\ovl{p}J_n X(f_0)\in\catwg{n}$. Thus $X(f_0)$ satisfies the hypotheses of Proposition 3.16 of \cite{Pa2} and we conclude that $X(f_0)\in\catwg{n}$ proving a).\bk

b) Let $a,b\in Y_0^d$. There is a pullback in $\catwg{n-1}$
\begin{equation*}
    \xymatrix{
    X(f_0)(a,b) \ar[rr]\ar[d] && Y_0(a)\times Y_0(b) \ar[d]\\
    X(f_0 a,f_0 b)\ar[rr] && X_0(f_0 a)\times X_0(f_0 b)
    }
\end{equation*}
Since $X_0(f_0 a),\, Y_0(a)\in\cathd{n-1}$ and $Y_0(a)^d=\{a\}\cong \{f_0 a\}=X_0(f_0 a)^d$, the map $Y_0(a)\rw X_0(f_0 a)$ is an $n$-equivalence (by Lemma 3.8 of \cite{Pa1}). Also, this map is a levelwise isofibration in $\Cat$ (as such is $Y_0\rw X_0$). It follows from Lemma \ref{lem-gen-const-4} that $X(f_0)(a,b)\rw X(f_0 a,f_0 b)$ is a $(n-1)$-equivalence.

Finally, since $f_0$ is levelwise surjective on objects by hypothesis, $\p{1,n}X(f_0)\rw \p{1,n}X$ is surjective. By Proposition 4.11 of \cite{Pa3}, the map $X(f_0)\rw X$ is therefore an $n$-equivalence.\bk

c) This follows from a) and b) using Proposition 3.16 of \cite{Pa2}.
\end{proof}
\begin{lemma}\label{lem-gen-const-6}
    If $X\in\cathd{}$, $\Dec X\in\cathd{}$ and the map $d_1:\Dec X\rw X$ is an isofibration.
\end{lemma}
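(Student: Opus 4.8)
The plan is to compute $\Dec X$ explicitly and read off both claims. Recall that, since $X\in\cathd{}$ is an equivalence relation, between any two related objects there is a unique (iso)morphism, so its nerve is $X_k=\{(a_0,\dots,a_k)\mid a_i\sim a_j \text{ for all }i,j\}$. By definition of the décalage, $(\Dec X)_k=X_{k+1}$, with the face and degeneracy operators inherited from $X$ and with the augmentation $d_1\colon \Dec X\to X$ induced by the extra (last) face operator. First I would unwind this into a concrete description of $\Dec X$ as a category: its objects are $X_1=\{(a,b)\mid a\sim b\}$, its arrows are $X_2$, there is a unique arrow $(a,b)\to(a',b)$ for each pair of objects with common second component (represented by the $2$-simplex $(a,a',b)\in X_2$, which lies in $X_2$ because $a\sim a'\sim b$), and $d_1$ sends $(a,b)$ to $a$ on objects and $(a,a',b)$ to $(a,a')$ on arrows.

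For part (a), this description shows that $\Dec X$ decomposes as the coproduct $\coprod_{b\in X_0} I([b])$, where $[b]$ is the $\sim$-class of $b$ and $I(S)$ denotes the indiscrete groupoid (codiscrete category) on a set $S$. A disjoint union of indiscrete groupoids is precisely an equivalence relation, so $\Dec X\in\cathd{}$. Equivalently, one checks directly that $\Dec X$ is a groupoid each of whose hom-sets has at most one element: an arrow $(a,a',b)$ is determined by its source and target, and it is invertible because $X$ is a groupoid. If one prefers not to quote that the décalage carries nerves of categories to nerves of categories, the only remaining point — that $\Dec X$ really is a category, i.e.\ that the above data satisfies the Segal condition — follows from Proposition \ref{pro-ner-int-cat}, since the Segal maps of $\Dec X$ are assembled from those of $X$.

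For part (b), to verify that $d_1\colon\Dec X\to X$ is an isofibration I would take an object $(a,b)$ of $\Dec X$ together with an isomorphism $\beta\colon a\to a'$ in $X$; since $X$ is an equivalence relation this is just the assertion $a\sim a'$, whence $a'\sim b$. The $2$-simplex $(a,a',b)$ is then an arrow $(a,b)\to(a',b)$ of $\Dec X$, automatically invertible, whose image under $d_1$ is exactly $\beta$. This provides the required lift. Conceptually, each component $I([b])$ is carried by $d_1$ isomorphically onto the full subcategory of $X$ spanned by $[b]$, and that subcategory is a union of connected components of $X$; hence every isomorphism out of an object in its image lifts within the component, which is exactly the isofibration property.

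The argument is a routine unwinding, so I do not expect a genuine obstacle. The only points demanding care are bookkeeping: fixing the simplicial conventions so that the augmentation is precisely the operator named $d_1$ and the composition in $\Dec X$ is the correct one, and — if preservation of nerves by $\Dec$ is not taken as known — confirming the Segal condition for $\Dec X$ rather than merely describing its objects and arrows.
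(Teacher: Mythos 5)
Your proof is correct and is essentially the paper's own argument in different notation: the paper writes $X=A[f]$ for a surjection $f:A\rw B$ and identifies $\Dec X=(A\tiund{B}A)[d_0]$ --- which is precisely your description of $\Dec X$ as a disjoint union of indiscrete groupoids, hence an object of $\cathd{}$ --- and then verifies the isofibration property by the very same lift of an isomorphism of $X$ to a triple in $X_2$ with prescribed source. The only (harmless) discrepancy is a mirror-image d\'ecalage convention --- in the paper the components of $\Dec X$ share the \emph{first} coordinate and $d_1$ projects onto the second, whereas in your version they share the second and $d_1$ projects onto the first --- which, as you anticipated in your closing remark, is pure bookkeeping and affects nothing, since an equivalence relation is symmetric.
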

\begin{proof}
Since $X\in\cathd{}$, $X=A[f]$ for a surjective map of sets $f:A\rw B$, where $A[f]$ is as in Definition 4.1 of \cite{Pa1} . Thus $\Dec X=(\tens{A}{B})[d_0]$ where $d_0:\tens{A}{B}\rw A$, $d_0(x,y)=X$. The source and target maps
\begin{equation*}
    {\tld}_0, {\tld}_1:(\Dec X)_1=A\tiund{B}A\tiund{B}A\rw(Dec X)_0=\tens{A}{B}
\end{equation*}
are ${\tld}_0(x,y,z)=(x,y)$, ${\tld}_1(x,y,z)=(x,z)$.

Given $(x,y)\in(\Dec X)_0$ and an isomorphism $(d_1(x,y)=y,z)\in X_1$, we have $(x,y,z)\i(\Dec X)_1$ with $\tld_1(x,y,z)=(x,z)$. In picture:
\begin{equation*}
    \xymatrix{
    (x,y)=\tld_0(x,y,z) \ar^{(x,y,z)}[rr] \ar[d] && (x,z)=\tld_1(x,y,z)\ar[d]\\
    d_1(x,y)=y \ar_{(y,z)=d_1(x,y,z)}[rr] && d_1(x,z)=z
    }
\end{equation*}
By definition, this shows that $d_1:\Dec X\rw X$ is an isofibration. It is also surjective on objects since $d_1:\tens{A}{B}\rw A$ is surjective.
\end{proof}
\newpage
\begin{lemma}\label{lem-gen-const-6}\
    \begin{itemize}
      \item [a)] Let $A\in\cathd{n}$, $B,C\in\Set$ and consider the pullback in $\funcat{n-1}{\Cat}$
          \begin{equation*}
            \xymatrix{
            Q \ar[r] \ar[d] & A\ar[d]\\
            \di{n,1}C \ar[r] & \di{n,1}B
            }
          \end{equation*}
          then $Q\in\cathd{n}$.
          \bk

      \item [b)] Let $X\in\cathd{n}$, $Z\in\cathd{}$ and consider the pullback in $\funcat{n-1}{\Cat}$
          \begin{equation*}
            \xymatrix{
            P \ar[r] \ar[d] & X\ar[d]\\
            \di{n,2}Z \ar[r] & \di{n,2}\q{2,n}X
            }
          \end{equation*}
          Then $P\in\cathd{n}$.
    \end{itemize}
\end{lemma}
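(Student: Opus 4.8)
The plan is to prove both parts by induction on $n$, establishing a) first and then deducing b) from it. Two elementary facts drive the induction. First, any functor whose target is a discrete category is an isofibration, since the only isomorphisms in a discrete category are identities and these lift trivially; hence every leg of the pullbacks above that lands in a discrete object is a levelwise isofibration in $\Cat$. Second, pullbacks in $\funcat{n-1}{\Cat}$ are computed levelwise and preserve the isomorphy of the Segal maps (Lemma \ref{lem-char-obj-II}), so by Lemma \ref{lem-char-obj} a pullback of objects of $\cat{n}$ again lies in $\cat{n}$.

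For a), the base case $n=1$ is immediate from Lemma \ref{lem-gen-const-3}: here the corner objects $A$, $\di{1}B$, $\di{1}C$ all lie in $\cathd{}$, and the map $A\rw \di{1}B$ is an isofibration by the first fact above, so $Q\in\cathd{}$. For the inductive step I would view each object as a simplicial object in the outer ($n$-th) direction. Since $\di{n,1}B$ and $\di{n,1}C$ are fully discrete, their components in this direction are the constant objects $\di{n-1,1}B$ and $\di{n-1,1}C$, so the $s$-th component $Q_s$ is the pullback of $A_s\rw \di{n-1,1}B \lw \di{n-1,1}C$ --- exactly the shape of a) in dimension $n-1$. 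By the inductive hypothesis $Q_s\in\cathd{n-1}$. Condition (i) of Definition \ref{def-hom-dis-ncat} for $Q$ then follows from the second fact, while for condition (ii) I would apply $\ovl{p}$: the right-hand leg is a levelwise isofibration, so by Lemma \ref{lem-gen-const-1} the functor $\ovl{p}$ commutes with this pullback, and since $\ovl{p}J_n A=\Nb{n-1}\p{n}A$ with $\p{n}A\in\cathd{n-1}$, the object $\ovl{p}J_n Q$ is the multinerve of the pullback of $\p{n}A\rw \di{n-1,1}B\lw \di{n-1,1}C$, which lies in $\cathd{n-1}$ again by the inductive hypothesis. Hence $\p{n}Q$ exists and lands in $\cathd{n-1}$, so $Q\in\cathd{n}$.

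For b), the key observation is that the relevant simplicial direction is the first one, in which $Z$ and $\q{2,n}X$ actually live; using Proposition \ref{pro-assoc-iso} to regard the objects as simplicial in direction $1$, the objects $\di{n,2}Z$ and $\di{n,2}\q{2,n}X$ become, level by level, the fully discrete objects $\di{n-1,1}Z_s$ and $\di{n-1,1}(\q{2,n}X)_s$ on the sets $Z_s$ and $(\q{2,n}X)_s$. Thus $P_s$ is the pullback of $X_s\rw \di{n-1,1}(\q{2,n}X)_s \lw \di{n-1,1}Z_s$, which is precisely the shape of part a) in dimension $n-1$, whence $P_s\in\cathd{n-1}$ by a). Conditions (i) and (ii) for $P$ are then verified exactly as in a), using the second fact and Lemma \ref{lem-gen-const-1}, giving $P\in\cathd{n}$.

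The step I expect to require the most care is the bookkeeping of directions together with the truncation condition (ii): one must check that $\di{n,r}$ and $\q{r,n}$ restrict correctly to the chosen simplicial direction, so that the levelwise pullbacks genuinely have the shape claimed, and that $\ovl{p}$ really commutes with the pullback --- which is exactly where the isofibration observation and Lemma \ref{lem-gen-const-1} enter. Everything else is a routine check that the levelwise conclusions assemble into membership in $\cathd{n}$.
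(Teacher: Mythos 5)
Your proposal is correct and follows essentially the same route as the paper: a joint induction on $n$ in which the levelwise pullbacks are identified as instances of part a) one dimension down, and the truncation condition is checked by commuting $\ovl{p}$ with the pullback (legitimate because the leg into the discrete base is a levelwise isofibration), with the paper leaving your explicit Segal-map check via Lemma \ref{lem-char-obj-II} implicit and citing Lemma 5.3 of \cite{Pa3} for the base case of b) instead of unwinding it as you do. One small imprecision: in b) the truncated pullback has base $\di{n-1,2}\q{2,n}X$, which is not fully discrete, so that step is \emph{not} ``exactly as in a)'' --- it has the shape of b) in dimension $n-1$ and must invoke the inductive hypothesis for b) (as the paper does), which your simultaneous induction on both parts does make available.
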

\begin{proof}
\nid By induction on $n$.

\nid In the case $n=1$ for a). Since $dB$ is discrete, $A\rw dB$ is an isofibration, therefore
\begin{equation*}
   Q=A\tiund{dB}dC \simeq    A\tms{ps}{dB}dC \simeq A^d\tiund{dB}dC\;.
\end{equation*}
Hence $Q\in\cathd{}$. The case $n=2$ for b) is Lemma 5.3 of \cite{Pa3}.

\bk
Suppose, inductively, that the lemma holds for $(n-1)$.

\nid a) for each $k\geq 0$ there is a pullback in $\funcat{n-2}{\Cat}$
\begin{equation*}
    \xymatrix{
    Q_k \ar[r] \ar[d] & A_k\ar[d]\\
            \di{n-1,1}C_k \ar[r] & \di{n-1,1}B_k
    }
\end{equation*}
Therefore, by inductive hypothesis a), $Q_k\in\cathd{n-1}$. For each $\ur=(r_1,...,r_{n-1})\in\dop{n-1}$, we have a pullback in $\Cat$
\begin{equation*}
    \xymatrix{
    Q_{\ur} \ar[r] \ar[d] & A_{\ur}\ar[d]\\
            d C \ar[r] & d B
    }
\end{equation*}
Since $p$ commutes with fiber products over discrete objects, we have a pullback in $\Set$
\begin{equation*}
    \xymatrix{
    p Q_{\ur} \ar[r] \ar[d] & p A_{\ur}\ar[d]\\
             C \ar[r] &  B
    }
\end{equation*}
It follows that there is a pullback in $\funcat{n-2}{\Cat}$
\begin{equation*}
    \xymatrix{
    \p{n} Q \ar[r] \ar[d] & \p{n} A\ar[d]\\
            \di{n-1,1} C \ar[r] & \di{n-1,1} B
    }
\end{equation*}
It follows by inductive hypothesis a) that $\p{n}Q\in\cathd{n}$. By definition, this means that $Q\in\cathd{n}$.
\bk

\nid b) For each $k\geq 0$, there is a pullback in $\funcat{n-2}{\Cat}$
\begin{equation*}
    \xymatrix{
    P_k \ar[r] \ar[d] & X_k \ar[d]\\
            \di{n-1,1} Z_k \ar[r] & \di{n-1,1}\q{1,n-1}X_k B
    }
\end{equation*}
Since $X_k\in\cathd{n-1}$ (as $X\in\cathd{n}$), by part a) this implies that $P_k\in\cathd{n-1}$. Since $\p{n}$ commutes with fiber products over discrete objects, we also have a pullback in $\funcat{n-2}{\cat}$
\begin{equation*}
    \xymatrix{
    \p{n} P \ar[r] \ar[d] & \p{n} X \ar[d]\\
            \di{n-1,2}\q{2,n} Z \ar[r] & \di{n-1,2}\q{2,n}X B
    }
\end{equation*}
where $\p{n}X\in\cathd{n-1}$. By inductive hypothesis b), $\p{n}P\in\cathd{n-1}$. Hence by definition, $P\in\cathd{n}$.
\end{proof}
\begin{proposition}\label{pro-gen-const-2}
    There is a functor $V_n:\cathd{n}\rw\cathd{n}$ with a map $f_X:V_n X\rw X$ natural in $X\in\cathd{n}$ such that
    \begin{itemize}
      \item [a)] $J_n f_X$ is a levelwise isofibration in $X\in\cathd{n}$ which is surjective on objects, and the same holds for $J_r \p{n,r} f_X$ for all $1\leq r\leq n$.\bk

      \item [b)] $V_n$ is identity on discrete objects and preserves pullbacks over discrete objects.\bk

      \item [c)] If $h:X\rw Y$ is a morphism in $\cathd{n}$, the following diagram  commutes for appropriate choices of sections to the discretization maps.
          \begin{equation*}
            \xymatrix{
            V_n X \ar[rr] && V_n Y\\
            (V_n X)^d \ar[rr]\ar[u] && (V_n Y)^d\ar[u]
            }
          \end{equation*}
    \end{itemize}
\end{proposition}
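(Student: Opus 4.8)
The plan is to construct $V_n$ by induction on $n$, using décalage at the base and the general construction of Section \ref{sbs-gen-const} at the inductive step. For $n=1$ I would set $V_1=\Dec$ and $f_X=d_1\colon \Dec X\rw X$; by Lemma \ref{lem-gen-const-6} this lands in $\cathd{}$ and is an isofibration surjective on objects, while $\Dec$ is the identity on discrete objects and preserves limits. The essential feature of the base case is that the canonical section to the discretization map of $\Dec X$ is a \emph{degeneracy} (the diagonal $s_0\colon X_0\rw X_1=(\Dec X)_0$), which is natural in $X$; this is precisely what manufactures functorial sections out of thin air. For $n>1$, regarding $X\in\cathd{n}$ as a simplicial object in $\cathd{n-1}$, I would apply $V_{n-1}$ to $X_0\in\cathd{n-1}$ to obtain $f_{X_0}\colon V_{n-1}X_0\rw X_0$ and then set $V_n X:=X(f_{X_0})$ via the general construction, with $f_X:=V(X)\colon X(f_{X_0})\rw X$ from Proposition \ref{pro-gen-const-1}. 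Since $X\in\cathd{n}\subset\catwg{n}$, Proposition \ref{pro-gen-const-1}(a),(c) immediately give $V_n X\in\cathd{n}$.

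For (a), at each $\uk\in\dop{n-1}$ the map $(J_n f_X)_{\uk}$ is, by the construction of Section \ref{sbs-gen-const}, a pullback of a product of copies of $(f_{X_0})_{\ur}$; since $f_{X_0}$ is levelwise an isofibration surjective on objects by the inductive hypothesis (a), and such maps are stable under products and pullback in $\Cat$, the map $J_n f_X$ is a levelwise isofibration surjective on objects. For the conditions on $J_r\p{n,r}f_X$ with $r<n$ I would carry, as an extra clause of the induction, the compatibility $\p{n}V_n\cong V_{n-1}\p{n}$ together with $\p{n}f_X=f_{\p{n}X}$; these follow from the identity $\p{n}(X(f_0))=(\p{n}X)(\p{n-1}f_0)$ established inside the proof of Proposition \ref{pro-gen-const-1} and the $(n-1)$-dimensional compatibility applied to $f_{X_0}$. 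Iterating this, $J_r\p{n,r}f_X$ reduces to $J_r f_{\p{n,r}X}$, which is a levelwise isofibration surjective on objects by the inductive hypothesis (a) in dimension $r$.

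For (b), if $X$ is discrete then $X_0$ is discrete, so $V_{n-1}X_0=X_0$ and $f_{X_0}=\Id$ by the inductive hypothesis (b); hence $X(f_{X_0})=X$ and $V_n X=X$. Preservation of pullbacks over discrete objects follows because the assignment $X\mapsto X(f_{X_0})$ is built entirely from pullbacks, and both $X\mapsto X_0$ and $V_{n-1}$ preserve such pullbacks by the inductive hypothesis (b); so a pullback $X'\tiund{D}X''$ over a discrete $D$ is sent to $X'(f_{X'_0})\tiund{D}X''(f_{X''_0})$.

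The main obstacle is (c), the functoriality of the sections to the discretization maps. The section $(V_n X)^d\rw V_n X$ is to be assembled from the inductive section $(V_{n-1}X_0)^d\rw V_{n-1}X_0$ at level $0$ (natural in $X$ by the inductive hypothesis (c)) together with the degeneracy maps in the $n^{th}$ simplicial direction, exactly as the base case uses the diagonal $s_0$. Because both ingredients are natural, for any $h\colon X\rw Y$ the square relating the two sections commutes strictly; the point---already illustrated by the failure of the naive functor $D_0$ in Section \ref{subs-intro-discr}---is that replacing $X$ by $V_n X$ trades an arbitrary, non-natural choice of pseudo-inverse $\zg'$ for the canonical degeneracy-based section, for which the relevant square does not merely pseudo-commute but commutes on the nose. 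The care needed here is to check that these sections are mutually compatible across the $n$ directions, i.e.\ that the level-$0$ inductive section and the $n^{th}$-direction degeneracy section fit together into a single section of the full discretization $\zg\lo{n}\colon V_nX\rw (V_nX)^d$; this is where the bulk of the bookkeeping lies.
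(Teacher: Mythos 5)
Your base case, part (b), and the first half of the inductive step (forming $F_nX:=X(f_{X_0})$ and invoking Proposition \ref{pro-gen-const-1}) agree with the paper, but taking $V_nX:=X(f_{X_0})$ itself is genuinely insufficient for (c), and the ``bookkeeping'' you defer is in fact a provable obstruction. Since $V(X)\colon X(f_{X_0})\rw X$ is an $n$-equivalence between homotopically discrete objects, it induces a natural isomorphism $(X(f_{X_0}))^d\cong X^d$. Hence a natural section of the discretization map of your $V_nX$ would, after composing with $V(X)$, yield a natural section $X^d\rw X$ for \emph{every} $X\in\cathd{n}$ --- which cannot exist: embed the indiscrete equivalence relation $W$ on two elements into $\cathd{n}$ by discrete inclusions; its swap automorphism covers the identity of the one-point $W^d$, so naturality would force the chosen representative $0$-cell to be fixed by the swap, and there is no such cell. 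Your intended section, assembled from the level-$0$ inductive section and degeneracies, is a natural map $\di{n}(V_{n-1}X_0)^d\rw V_nX$, but it starts from the wrong discrete object: for your construction the canonical map $(V_{n-1}X_0)^d\rw(V_nX)^d\cong X^d$ is a genuine quotient with no natural splitting. This is precisely why the paper's proof contains a second step you omit: after forming $F_nX$ it pulls back along $\di{n,2}\Dec\q{2,n}F_nX\rw\di{n,2}\q{2,n}F_nX$, i.e.\ it applies the d\'ecalage trick \emph{again} in the remaining top direction. That pullback changes the discretization --- the paper computes $(V_nX)^d=(\Dec\q{2,n}F_nX)^d=(V_{n-1}X_0)^d$ --- and only then does your assembly (inductive section at level $0$ together with the degeneracy section of $\Dec$) produce a strictly natural section. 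Your own base-case insight, that $\Dec$ replaces components by objects so that the section becomes a degeneracy, has to be applied at the top level of the inductive step as well, not only to $X_0$.

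There is also a false auxiliary claim in your argument for (a): the extra inductive clause $\p{n}V_n\cong V_{n-1}\p{n}$ with $\p{n}f_X=f_{\p{n}X}$ fails already at the bottom of the induction. For $W=A[f]\in\cathd{}$ one has $\p{1}\Dec W\cong W_0=A$ (the components of $\Dec W$ are the objects of $W$), whereas $\p{1}W=pW=B$; likewise for $n=2$, $\p{2}(X(f_{X_0}))=(\p{2}X)(\p{1}f_{X_0})$ has object set $(X_0)_0$, while $V_1\p{2}X=\Dec\,\p{2}X$ has object set $pX_1$. So $J_r\p{n,r}f_X$ does not reduce to $J_rf_{\p{n,r}X}$. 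The conclusion of (a) is nevertheless correct, and the paper proves it without any such compatibility: $\p{n,r}$ commutes with the pullbacks defining the construction, so $\p{n,r}f_X$ is levelwise a pullback of (products of) levelwise components of $\p{n-1,r-1}f_{X_0}$ together with the d\'ecalage leg, and isofibrations surjective on objects are stable under products and pullbacks. Note that this repaired argument must be run for the corrected two-step construction, where $f_X$ is the composite $V_nX\rw F_nX\rw X$.
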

\begin{proof}
By induction on $n$. For $n=1$, let $V_1 X=\Dec X$ and $f_X=d_1:\Dec X\rw X$. By Lemma \ref{lem-gen-const-6}, $f_X$ is an isofibration and is surjective on objects. Also $\Dec$ preserves pullbacks. Given a morphism $h:X\rw Y$ in $\cathd{}$, we have a diagram
\begin{equation*}
    \xymatrix{
    \Dec X \ar[rr]\ar[d] && \Dec Y \ar[d]\\
    (\Dec X)^d=d X_0 \ar[rr] && (\Dec Y)^d=d Y_0
    }
\end{equation*}
This proves the lemma in the case $n=1$. Suppose, inductively, that it holds for $(n-1)$ and let $X\in\cathd{n}$.\bk

\nid a) Let $F_n X=X(f_{X_0})$ where $f_{X_0}:V_{n-1}X_0 \rw X_0$. By inductive hypothesis a), $f_{X_0}$ satisfies the hypothesis of Proposition \ref{pro-gen-const-1} and thus $F_n X\in\cathd{n}$.

Consider the pullback in $\funcat{n-1}{\Cat}$
\begin{equation*}
    \xymatrix{
    V_n X \ar^{h}[rr]\ar[d] && F_n X\ar[d]\\
    \di{n,2}\Dec\q{2,n}F_n X\ar[rr] && \di{n,2}\q{2,n}F_n X
    }
\end{equation*}
Since $F_n X\in\cathd{n}$, $\q{2,n}F_n X\in\cathd{}$, hence $\Dec \q{2,n}F_n X\in\cathd{}$. Thus by Lemma \ref{lem-gen-const-6}, $V_n X\in\cathd{n}$.

For each $\uk\in\dop{n-1}$, there is a pullback in $\Cat$
\begin{equation}\label{eq4-gen-const}
    \xymatrix{
    (V_n X)_{\uk} \ar^{}[rr]\ar[d] && (F_n X)_{\uk}\ar[d]\\
    d(\di{n,2}\Dec\q{2,n}F_n X)_{\uk} \ar[rr] && d(\di{n,2}\q{2,n}F_n X)_{\uk}\;.
    }
\end{equation}
The bottom horizontal map is an isofibration since the target is discrete; hence $H_{\uk}$ is also an isofibration. The bottom horizontal map in \eqref{eq4-gen-const} is also surjective on objects since
\begin{equation*}
    (N\Dec \q{2,n}F_n X)_r \rw (H\q{2,n}F_n X)_r
\end{equation*}
is surjective for all $r\geq 0$, where $N:\Cat\rw\funcat{}{\Set}$ is the nerve functor. It follows that $h_{\uk}$ is also surjective on objects. Since, by Proposition \ref{pro-gen-const-1}, the map $v_X:F_X \rw X$ is a levelwise isofibration surjective on objects, we conclude from above that the same holds for the composite map
\begin{equation*}
    f_X:V_n X \xrw{\;\;h\;\;}F_n X\xrw{v_X} X\;.
\end{equation*}
We now show that $\p{n,r}f_X$ is an isofibration surjective on objects for all $r$. Since, by Proposition \ref{pro-gen-const-1}, this holds for $\p{n,r}V_X$, it is sufficient to show this for $\p{n,r}h$.

Since $\p{n.r}$ commutes with pullbacks over discrete objects, we have a pullback in $\funcat{r-1}{\Cat}$
\begin{equation*}
    \xymatrix{
    \p{n-r}V_n X \ar^{\p{n-r}h}[rr]\ar[d] && \p{n-r}F_n X\ar[d]\\
    \di{r,2}\Dec\q{2,n}F_n X\ar[rr] && \di{r,2}\q{2,n}F_n X\;.
    }
\end{equation*}
Using a similar argument as above we conclude that $\p{n,r}h$ is a levelwise isofibration surjective on objects. This proves a).\bk

\nid b) If $X$ is discrete, $F_n X=X=\di{n,2}\q{2,n}X$, thus $V_n X=X$. Since, by inductive hypothesis, $V_{n-1}$ commutes with pullbacks over discrete objets, so does $F_n$ as easily seen. Since $\q{2,n}$ commutes with pullbacks over discrete objects and $\Dec$ commutes with pullbacks, it follows by construction that $V_n$ commutes with pullbacks over discrete objects.\bk

\nid c) We have
\begin{align*}
    (V_n X)^d & = (\q{2,n}\di{n,2}\Dec \q{2,n}F_n X)^d =\\
    & = (\Dec \q{2,n}F_n X)^d = (\q{2,n}F_n X)_0=\q{1,n-1}(F_n X)_0 =\\
    & = (F_n X)^d_0=(V_{n-1}X_0)^d
\end{align*}
and similarly for $V_n Y$. By inductive hypothesis c), there is a commuting diagram
\begin{equation*}
    \xymatrix{
    (V_n X)^d=(V_{n-1}X_0)^d \ar[rr]\ar[d] && V_{n-1}X_0=(F_n X)_0 \ar[rr]\ar[d] && F_n X \ar[d]\\
    (V_n Y)^d=(V_{n-1}Y_0)^d \ar[rr] && V_{n-1}Y_0=(F_n Y)_0 \ar[rr] && F_n Y
    }
\end{equation*}
as well as
\begin{equation*}
    \xymatrix{
    (V_n X)^d=(\Dec \q{2,n}F_n X)^d \ar[rr]\ar[d] && \di{n,2}\Dec q{2,n}F_n X\ar[d]\\
    (V_n Y)^d=(\Dec \q{2,n}F_n Y)^d \ar[rr] && \di{n,2}\Dec q{2,n}F_n Y\;.
    }
\end{equation*}
From the construction of $V_n X$ and $V_n Y$ we therefore conclude that there is a commuting diagram
\begin{equation*}
    \xymatrix{
    V_n X \ar[r] & V_n Y\\
    (V_n X)^d \ar[r]\ar[u] & (V_n Y)^d\ar[u]\;.
    }
\end{equation*}
\end{proof}
\begin{corollary}\label{cor-gen-const-1}
    There is a functor
    \begin{equation*}
        F_n:\catwg{n}\rw \catwg{n}
    \end{equation*}
    and a map $V_X:F_n X\rw X$ (natural in $X\in\catwg{n}$) such that
    \begin{itemize}
      \item [i)] $V_X$ is an $n$-equivalence.\bk

      \item [ii)] $F_n$ is identity on discrete objects and preserves pullbacks over discrete objects.\bk

      \item [iii)] If $f:X\rw Y$ is a morphism in $\catwg{n}$ the following diagram commutes for appropriate choices of sections to the discretization maps
          \begin{equation*}
            \xymatrix{
            (F_n X)_0 \ar[rr] && (F_n Y)_0\\
            (F_n X )^d_0 \ar[rr]\ar[u]  && (F_n Y)^d_0\ar[u]\;.
            }
          \end{equation*}
    \end{itemize}
\end{corollary}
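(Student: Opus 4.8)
The plan is to obtain $F_n$ by feeding the output of Proposition \ref{pro-gen-const-2} into Proposition \ref{pro-gen-const-1}. Given $X\in\catwg{n}$, weak globularity gives $X_0\in\cathd{n-1}$, so I would apply the functor $V_{n-1}:\cathd{n-1}\rw\cathd{n-1}$ of Proposition \ref{pro-gen-const-2} to $X_0$, obtaining the natural map $f_{X_0}:V_{n-1}X_0\rw X_0$, and then set $F_n X = X(f_{X_0})$ via the general construction of \ref{sbs-gen-const}. The comparison map $V_X:F_n X\rw X$ is the map $V(X)$ produced by Proposition \ref{pro-gen-const-1}.

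To see that this is well defined I would first check that $f_{X_0}$ satisfies the hypotheses of Proposition \ref{pro-gen-const-1}: it is a morphism in $\cathd{n-1}$, and by Proposition \ref{pro-gen-const-2}(a) the maps $J_{n-1}f_{X_0}$ and $J_r\p{n-1,r}f_{X_0}$ (for $1\leq r\leq n-1$) are levelwise isofibrations in $\Cat$, surjective on objects --- precisely the isofibration and surjectivity conditions demanded by Proposition \ref{pro-gen-const-1}. Proposition \ref{pro-gen-const-1}(a) then yields $F_n X\in\catwg{n}$, and Proposition \ref{pro-gen-const-1}(b) gives that $V_X$ is an $n$-equivalence, establishing (i). Functoriality of $F_n$ follows from the naturality of $f_{(-)}$ together with the functoriality of $V_{n-1}$: a morphism $f:X\rw Y$ induces $f_0:X_0\rw Y_0$ and a commuting square relating $V_{n-1}f_0:V_{n-1}X_0\rw V_{n-1}Y_0$ to $f_0$, and combining this square with $f$ I would define $F_n f:F_n X\rw F_n Y$ through the universal property of the pullbacks defining $X(f_{X_0})$ at each level; the same universal property makes $V_X$ natural in $X$.

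Property (ii) is then quick. If $X$ is discrete then $X_0$ is discrete, so $f_{X_0}=\Id$ by Proposition \ref{pro-gen-const-2}(b) and hence $F_n X = X(\Id)=X$; and since $V_{n-1}$ preserves pullbacks over discrete objects and the construction $X(f_0)$ is assembled entirely from pullbacks, $F_n$ inherits this preservation property. For (iii), the general construction gives $(F_n X)_0 = V_{n-1}X_0$ and therefore $(F_n X)^d_0 = (V_{n-1}X_0)^d$, so the square to be verified is exactly the square of Proposition \ref{pro-gen-const-2}(c) applied to $f_0:X_0\rw Y_0$, which commutes for the appropriate choice of sections.

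The conceptual content has already been isolated in the two preceding propositions, so I expect the only genuine work to be bookkeeping: verifying that the object-wise general construction $X\mapsto X(f_{X_0})$ respects composition and the relevant naturality squares, so that $F_n$ is really a functor, and confirming that preservation of pullbacks over discrete objects survives the passage through $X(f_0)$. This is where I would take most care, since everything else reduces to quoting Propositions \ref{pro-gen-const-1} and \ref{pro-gen-const-2}.
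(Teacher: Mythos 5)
Your proposal is correct and follows essentially the same route as the paper: the paper likewise sets $F_n X = X(f_{X_0})$ with $f_{X_0}:V_{n-1}X_0\rw X_0$ supplied by Proposition \ref{pro-gen-const-2}, verifies the isofibration/surjectivity hypotheses so that Proposition \ref{pro-gen-const-1} yields $F_n X\in\catwg{n}$ and the $n$-equivalence $V_X$ for (i), treats (ii) exactly as you do (discrete case via $f_{X_0}=\Id$, pullbacks via $V_{n-1}(X_0\tiund{Z}Y_0)=V_{n-1}X_0\tiund{Z}V_{n-1}Y_0$), and deduces (iii) from $(F_n X)_0=V_{n-1}X_0$ together with Proposition \ref{pro-gen-const-2}(c). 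Your flagged bookkeeping points (functoriality of $X\mapsto X(f_{X_0})$ via the universal property of the defining pullbacks) are indeed the only details the paper leaves as ``easily checked.''
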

\begin{proof}
Given $X\in\catwg{n}$, since $X_0\in\cathd{n-1}$ by Proposition \ref{pro-gen-const-2} there  is a map
\begin{equation*}
    f_{X_0}:V_{n-1}X_0 \rw X_0
\end{equation*}
such that $J_{n-1}f_{X_0}$ and $J_r p^{n-1,}f_{X_0}$ are levelwise isofibrations in $Cat$ surjective on objects. Let $F_n X=X(f_{X_0})$. By Proposition \ref{pro-gen-const-1}, $F_nX\in\catwg{n}$ and there is an $n$-equivalence $V(X):F_n X\rw X$, proving i). If $X$ is discrete, so is $X_0$, thus by Proposition \ref{pro-gen-const-2} $f_{X_0}=\Id$ and therefore $F_n X=X$.

Let $X\rw Z \lw Y$ be a pullback in $\catwg{n}$ with $Z$ discrete. By Proposition \ref{pro-gen-const-2}, $V_{n-1}(X_0\tiund{Z}Y_0)= V_{n-1}X_0\tiund{Z}V_{n-1}Y_0$ and therefore, as easily checked,
\begin{equation*}
    (F_n(X\tiund{Z}Y))_1=(F_n X)_1\tiund{Z}(F_n Y)_1\;.
\end{equation*}
It follows that
\begin{equation*}
    F_n(X\tiund{Z}Y)=F_n X\tiund{Z}F_n Y
\end{equation*}
which is ii). Since $(F_n X)_0=V_{n-1}X_0$, iii) follows from Proposition \ref{pro-gen-const-2}.
\end{proof}
%%
%%%%%%%%%%%%%%%%%%%%%%%%%%%%%%%%%%%%%%%%%%%%%%%%%%%%%%%%%%%%%%%%%%%%%%%%%%%%%%%%%%%
\section{The category $\ftawg{n}$}\label{sec-fta}

The idea of the construction of the discretization functor is to replace the homotopically discrete substructures in $X\in\catwg{n}$ by their discretization. However, as outlined in the introduction to this paper, this cannot be done in a functorial way since the choice of sections to the discretization maps are not canonical.

For this reason, we introduce in this section a new category $\ftawg{n}$ in which the sections to the discretization maps for the homotopically discrete substructures are canonical.
We then show in Proposition \ref{pro-fta-1} that there is a functor
\begin{equation*}
        G_n:\catwg{n}\rw\ftawg{n}
    \end{equation*}
    and an $n$-equivalence in $\tawg{n}$, $G_nX \rw X$ for each $X\in \catwg{n}$. The construction of $G_n$ uses the functor $F_n$ built in Section \ref{sec-canonical}. In Section \ref{sec-fta-to-tam}, $G_n$ will be used to build the discretization functor $\Discn:\catwg{n}\rw \ta{n}$.

\begin{definition}\label{def-fta-1}
    Define the category $\ftawg{n}$ as follows. $\ftawg{1}=\Cat$. For each $n\geq 2$ let $\ftawg{n}$ have the following objects and morphisms:
    \begin{itemize}

      \item [i)]
    Objects of $\ftawg{n}$ consist of $X\in\tawg{n}$ such that for all $\uk=(\seqc{k}{1}{s)}\in\dop{s}$, $\ur=(\seqc{r}{1}{s})\in\dop{s}$, $(1\leq s \leq n-2)$ and morphism $\uk\rw\ur$ in $\dop{s}$, the corresponding morphism
    \begin{equation*}
        f:X_{\uk 0}\rw X_{\ur 0}
    \end{equation*}
    in $\cathd{n-s-1}$ is such that there are choices of sections to the discretization maps
    \begin{align*}
        &\zg(X_{\uk}):X_{\uk 0} \rw X_{\uk 0}^d\\
        &\zg(X_{\ur}):X_{\ur 0} \rw X_{\ur 0}^d\\
    \end{align*}
    making the following diagram commute
    \begin{equation}\label{eq-def-fta-1}
    \xymatrix@C=50pt@R=30pt{
    X_{\uk 0} \ar^{f}[r] & X_{\ur 0}\\
    X^d_{\uk 0} \ar_{f^d}[r] \ar^{\zg'(X_{\uk 0})}[u] & X^d_{\ur 0} \ar_{\zg'(X_{\ur 0})}[u]
    }
    \end{equation}

      \item [ii)] A morphism $F:X\rw Y$ in $\ftawg{n}$ is a morphism in $\tawg{n}$ such that, for all $\uk=(\seqc{k}{1}{s})\in\dop{s}$, $1\leq s \leq n-2$, the following diagram commutes
          \begin{equation}\label{eq-def-fta-2}
          \xymatrix@C=50pt@R=30pt{
          X_{\uk 0} \ar^{F_{\uk 0}}[r] & Y_{\uk 0}\\
          X^d_{\uk 0} \ar_{F^d_{\uk 0}}[r] \ar^{\zg'(X_{\uk 0})}[u] & Y^d_{\uk 0} \ar_{\zg'(Y_{\uk 0})}[u]
          }
          \end{equation}
    \end{itemize}
\end{definition}
\begin{remark}\label{rem-fta-1}
    It is immediate from the definition that, if $F\in\ftawg{n}$, $X_k\in\ftawg{n-1}$ for all $k\geq 0$.
\end{remark}
\begin{lemma}\label{lem-fta-1}
    The functors $\p{n},\q{n}:\tawg{n}\rw \tawg{n-1}$ induce functors
    \begin{equation*}
        \p{n},\q{n}:\ftawg{n}\rw \ftawg{n-1}\;.
    \end{equation*}
\end{lemma}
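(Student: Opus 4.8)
The plan is to verify that $\p{n}$ (and, by the same argument, $\q{n}$) sends objects of $\ftawg{n}$ to objects of $\ftawg{n-1}$ and morphisms to morphisms; preservation of identities and composites is then automatic, since $\ftawg{n}$ and $\ftawg{n-1}$ are subcategories of $\tawg{n}$ and $\tawg{n-1}$ and $\p{n},\q{n}:\tawg{n}\rw\tawg{n-1}$ are already functors. Everything reduces to understanding how $\p{n}$ acts on the homotopically discrete substructures $X_{\uk 0}$ appearing in Definition \ref{def-fta-1}.

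Since $\p{n}=\ovl{\p{n-1}}$ acts one simplicial direction at a time, iterating this description gives, for $\uk=(\seqc{k}{1}{s})\in\dop{s}$ with $1\leq s\leq n-3$,
\begin{equation*}
    (\p{n}X)_{\uk 0}=\p{n-s-1}(X_{\uk 0}),
\end{equation*}
and by Definition \ref{def-hom-dis-ncat} the functor $\p{n-s-1}$ restricts to $\cathd{n-s-1}\rw\cathd{n-s-2}$, so $(\p{n}X)_{\uk 0}\in\cathd{n-s-2}$, which is the level required by $\ftawg{n-1}$. The compatibility to record is that $\p{n-s-1}$ commutes with discretization, that is $\p{n-s-1}(X_{\uk 0}^d)=((\p{n}X)_{\uk 0})^d$ (see Definition \ref{def-wg-ps-cat}), and that, by naturality of the discretization map, it carries $\zg(X_{\uk 0})$ to $\zg((\p{n}X)_{\uk 0})$. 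Being a functor, $\p{n-s-1}$ then sends any section $\zg'(X_{\uk 0})$ of $\zg(X_{\uk 0})$ to a section of $\zg((\p{n}X)_{\uk 0})$, and we take this as the chosen section for $\p{n}X$. Applying $\p{n-s-1}$ to the commuting square \eqref{eq-def-fta-1} for $X$---available because the range $1\leq s\leq n-3$ sits inside the range $1\leq s\leq n-2$ of Definition \ref{def-fta-1}---yields the corresponding square for $\p{n}X$, so $\p{n}X\in\ftawg{n-1}$. In the same way, applying $\p{n-s-1}$ to the square \eqref{eq-def-fta-2} of a morphism $F:X\rw Y$ in $\ftawg{n}$ produces the square \eqref{eq-def-fta-2} for $\p{n}F$, so $\p{n}F$ is a morphism in $\ftawg{n-1}$.

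For $\q{n}$ the argument is identical. The conditions in Definition \ref{def-fta-1} constrain only the homotopically discrete objects $X_{\uk 0}\in\cathd{n-s-1}$, whose relevant directions are groupoidal, so that $p$ and $q$ agree there (recall that $p\clC=q\clC$ for a groupoid $\clC$). Hence $\q{n-s-1}$ coincides with $\p{n-s-1}$ on these substructures and the whole computation carries over verbatim.

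I expect the main obstacle to be the compatibility step: checking that $\p{n-s-1}$ transports discretization maps to discretization maps, and hence sections to sections. Once the commutation of the truncation functor with $(-)^d$ and the naturality of the discretization map are in hand, the remainder is the purely formal observation that a functor preserves commuting squares, sections, identities and composites.
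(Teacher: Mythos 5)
Your proposal is correct and takes essentially the same route as the paper's own proof: both apply $\p{n-s-1}$ to the commuting squares \eqref{eq-def-fta-1} and \eqref{eq-def-fta-2}, using that $(\p{n}X)_{\uk 0}=\p{n-s-1}X_{\uk 0}$, that $\p{n-s-1}$ fixes the discrete objects $X^d_{\uk 0}$ (so it carries discretization maps to discretization maps and sections to sections), and that $\p{n-s-1}X_{\uk 0}=\q{n-s-1}X_{\uk 0}$ on the homotopically discrete substructures, which is exactly the paper's reason why the $\q{n}$ case is analogous. Your additional bookkeeping (the index range $1\leq s\leq n-3$ sitting inside $1\leq s\leq n-2$, and the automatic preservation of identities and composites) is sound and merely makes explicit what the paper leaves implicit.
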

\begin{proof}
Let $X\in\ftawg{n}$ and $\uk\rw\ur$ be a morphism in $\dop{s}$. By applying the functor $\p{n-s-1}$ to the commuting diagram \eqref{eq-def-fta-1} and using the fact that
\begin{equation*}
    \p{n-s-1}X_{\uk 0}=(\p{n}X)_{\uk 0},\qquad \p{n-s-1}X^d_{\uk 0}= X^d_{\uk 0}=(\p{n}X)^d_{\uk 0}
\end{equation*}
we obtain the commuting diagram
\begin{equation*}
\xymatrix@C=60pt@R=40pt{
(\p{n}X)_{\uk 0} \ar^{\p{n-s-1}f}[r] & (\p{n}X)_{\ur 0} \\
(\p{n}X)^d_{\uk 0} \ar_{\p{n-s-1}f^d}[r] \ar[u] & (\p{n}X)^d_{\ur 0} \ar[u]
}
\end{equation*}
This shows that $\p{n}X\in\ftawg{n}$. Given $F:X\rw Y$ in $\ftawg{n}$, by applying $\p{n-s-1}$ to the commuting diagram \eqref{eq-def-fta-2} we obtain the commuting diagram
\begin{equation*}
\xymatrix@C=60pt@R=40pt{
(\p{n}X)_{\uk 0} \ar^{(\p{n}F)_{\uk 0}}[r] & (\p{n}Y)_{\uk 0} \\
(\p{n}X)^d_{\uk 0} \ar_{(\p{n}F)^d_{\uk 0}}[r] \ar[u] & (\p{n}Y)^d_{\uk 0} \ar[u]
}
\end{equation*}
By definition this means that $\p{n}F\in\ftawg{n}$. The proof for $\q{n}$ is analogous since $ \p{n-s-1}X_{\uk 0}= \q{n-s-1}X_{\uk 0}$.
\end{proof}
\begin{proposition}\label{pro-fta-1}
    For each $n\geq 2$ there is a functor
    \begin{equation*}
        G_n:\catwg{n}\rw\ftawg{n}
    \end{equation*}
    defined as follows. $G_2=F_2$; given $G_{n-1}$ let $G_n=\ovl{G}_{n-1}\circ F_n$. Then
    \begin{itemize}
      \item [a)] $G_n:\catwg{n}\rw \ftawg{n}$ and $(G_n X)_k\in\catwg{n-1}$.\mk

      \item [b)] There is an $n$-equivalence in $\tawg{n}$, $G_n X\rw X$, natural in $X\in\catwg{n}$.\mk

      \item [c)] $G_n$ preserves $n$-equivalences.\mk

      \item [d)] $G_n$ is identity on discrete objects and preserves pullbacks over discrete objects.

    \end{itemize}
\end{proposition}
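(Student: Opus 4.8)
The plan is to induct on $n$ along the recursive definition $G_2=F_2$, $G_n=\ovl{G}_{n-1}\circ F_n$, at each stage transporting the properties of $F_n$ recorded in Corollary \ref{cor-gen-const-1} across the levelwise functor $\ovl{G}_{n-1}$, and reducing the four assertions to statements one dimension down.

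First the base case $n=2$. Since the index range $1\le s\le n-2$ in Definition \ref{def-fta-1} is empty when $n=2$, the defining conditions of $\ftawg{2}$ are vacuous, so $\ftawg{2}=\tawg{2}$ and $G_2=F_2$ automatically takes values in $\ftawg{2}$; moreover $F_2X\in\catwg{2}\subset\funcat{}{\Cat}$ gives $(G_2X)_k\in\Cat=\catwg{1}$, which is a). The $2$-equivalence $V_X:F_2X\rw X$ of Corollary \ref{cor-gen-const-1} i) is b), Corollary \ref{cor-gen-const-1} ii) is d), and c) follows from the naturality of $V_X$ together with the two-out-of-three property of $2$-equivalences applied to the naturality square of $V$.

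For the inductive step I would first isolate two auxiliary facts and prove them by the same induction: that $G_m$ sends $\cathd{m}$ into $\cathd{m}$, and that $G_m$ preserves Segal-type pullbacks $\pro{A_1}{A_0}{s}$ with $A_0\in\cathd{m}$. Granting these, $(G_nX)_0=G_{n-1}((F_nX)_0)\in\cathd{n-1}$ gives weak globularity, while $(G_nX)_k=G_{n-1}((F_nX)_k)$ inherits the isomorphism Segal condition from $(F_nX)_k\in\catwg{n-1}$, so that $(G_nX)_k\in\catwg{n-1}$; this is the substantive part of a). Next, applying the natural $(n-1)$-equivalence $G_{n-1}Z\rw Z$ of the inductive hypothesis b) levelwise produces a levelwise $(n-1)$-equivalence $G_nX=\ovl{G}_{n-1}(F_nX)\rw F_nX$; composing with $V_X:F_nX\rw X$ yields, by closure of $n$-equivalences under composition and the fact that a levelwise $(n-1)$-equivalence is an $n$-equivalence (cf.\ Theorem \ref{the-funct-Qn}), the natural $n$-equivalence $G_nX\rw X$ of b). That $G_nX$ lies in $\tawg{n}$ and satisfies its induced Segal condition is then checked by transporting the Segal isomorphisms of $F_nX$ across the levelwise $(n-1)$-equivalence $G_nX\rw F_nX$. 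Parts c) and d) are finally formal: c) from the naturality square of b) and two-out-of-three, and d) from the facts that $F_n$ and $G_{n-1}$ are identities on discrete objects and preserve pullbacks over discrete objects, such pullbacks being computed levelwise.

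The main obstacle is the remaining half of a): verifying the section-compatibility squares \eqref{eq-def-fta-1} and \eqref{eq-def-fta-2} that place $G_nX$ in $\ftawg{n}$. Here I would factor a morphism $\uk\rw\ur$ of $\dop{s}$ into its component in the outer simplicial direction (untouched by $\ovl{G}_{n-1}$) and its inner components. For $2\le s\le n-2$ the relevant square sits inside a single value $(G_nX)_{k_1}=G_{n-1}((F_nX)_{k_1})\in\ftawg{n-1}$ and commutes because that object satisfies the $\ftawg{n-1}$ conditions for sub-indices in $\dop{s-1}$. The delicate case is $s=1$: the square at the outermost level $0$ under a morphism $(F_nX)_{k_1}\rw(F_nX)_{r_1}$ of the outer structure. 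This level-$0$ compatibility is exactly what the $\ftawg{n-1}$ morphism condition omits, and it is precisely here that Corollary \ref{cor-gen-const-1} iii) --- the functorial commutation of the level-$0$ section squares for the $F$-functors from which the $G$'s are assembled --- has to be invoked, after checking that applying the levelwise functor preserves the commuting square and the identifications of discretizations. Reconciling the section choices produced by $F_n$ at the outer level with those produced inductively by $G_{n-1}$ on the inner levels, so that one coherent system of sections makes all squares \eqref{eq-def-fta-1} commute at once, is the step demanding the most care.
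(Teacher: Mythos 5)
Your overall skeleton coincides with the paper's: induct along $G_n=\ovl{G}_{n-1}\circ F_n$, get b) from levelwise $(n-1)$-equivalences $G_{n-1}((F_nX)_k)\rw (F_nX)_k\rw X_k$ (the paper cites Lemma 4.8 of \cite{Pa3} for the passage from levelwise to global), get c) by two-out-of-three on the naturality square (Proposition 4.11 of \cite{Pa3}), get d) formally, and handle the $\ftawg{n}$-conditions by factoring a morphism $\uk\rw\ur$ of $\dop{s}$ through $(r_1,\uk')$, exactly as in diagrams \eqref{eq-pro-fta-1}--\eqref{eq-pro-fta-3}. However, your base case contains a genuine error. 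The claim that the conditions of Definition \ref{def-fta-1} are vacuous at $n=2$, so that $\ftawg{2}=\tawg{2}$ and $G_2=F_2$ lands there ``automatically,'' cannot be right: Remark \ref{rem-fta-tam-1} asserts that for any morphism $f:X\rw Y$ in $\ftawg{n}$ the level-$0$ section square \eqref{eq-rem-fta-tam-1} commutes \emph{by definition} of $\ftawg{n}$, i.e.\ the intended index range includes the empty tuple ($s=0$), and this instance is the entire content of $\ftawg{2}$. If $\ftawg{2}$ really had all morphisms of $\tawg{2}$, then $\rz=D_2$ would not be functorial --- precisely the failure of $D_0$ on $\catwg{2}$ described in the introduction --- and Section \ref{sec-fta-to-tam} would collapse; moreover your own inductive step consumes the level-$0$ morphism squares one dimension down, so a vacuous base case leaves your induction unseeded. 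The correct base case is the paper's: Corollary \ref{cor-gen-const-1} iii) is exactly the statement that $F_2$ of every morphism of $\catwg{2}$ satisfies the section-compatibility square, so $F_2:\catwg{2}\rw\ftawg{2}$. Since you do invoke Corollary \ref{cor-gen-const-1} iii) later (your ``delicate case,'' which in the corrected indexing is the $s=0$ square, handled in the paper by the inductive $\ftawg{n-1}$ object and morphism conditions), the repair is local, but as written the proof starts from a false statement.

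A second, smaller problem is your auxiliary lemma that $G_m$ preserves Segal pullbacks $\pro{A_1}{A_0}{s}$ with $A_0\in\cathd{m}$. This is asserted without proof and is strictly stronger than anything on record: Corollary \ref{cor-gen-const-1} ii) and Proposition \ref{pro-gen-const-2} b) give preservation of pullbacks over \emph{discrete} objects only, and the restriction is essential, since the $q$-type functors entering the construction of $V_n$ preserve fiber products only over discrete objects (Lemma \ref{lem-q-pres-fib-pro}). For $\tawg{n}$-membership you do not need it: the paper transports only the \emph{induced} Segal maps of $F_nX$, which are pullbacks over the discrete object $(F_nX)_0^d$, using inductive hypotheses c) and d) together with $(G_nX)_0^d=(F_nX)_0^d$; you should route your argument the same way rather than through pullbacks over homotopically discrete objects. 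Your lemma is genuinely needed only for the residual claim $(G_nX)_k\in\catwg{n-1}$, where the paper is itself terse (``follows by induction,'' folding the claim into statement a)); if you retain the lemma you must prove it from the explicit construction of $F_n$ (e.g.\ using that $\Dec$ preserves all pullbacks), not merely cite it.
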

\begin{proof}
For $n=2$, by Corollary \ref{cor-gen-const-1} the functor $F_2:\catwg{2}\rw \catwg{2}$ is in fact a functor $F_2:\catwg{2}\rw \ftawg{2}$ satisfying a) - d).

Suppose we defined $G_{n-1}$ satisfying the above properties and let $X\in\catwg{n}$. We first check that $G_n X\in\tawg{n}$. We have
\begin{equation*}
    (G_n X)_0=G_{n-1}(F_n X)_0\;.
\end{equation*}
Since $(F_n X)_0\in\cathd{n-1}$ (as $F_n X\in\catwg{n}$), there is a $(n-1)$-equivalence $(F_n X)\rw(F_n X)^d_0$. Thus by inductive hypothesis c) and d) this induces an $(n-1)$-equivalence
\begin{equation*}
    (G_n X)_0=G_{n-1}(F_n X)_0 \rw G_{n-1}(F_n X)^d_0 = (F_n X)^d_0\;.
\end{equation*}
Since, by inductive hypothesis a), $(G_n X)_0\in\catwg{n-1}$, it follows that
\begin{equation*}
    (G_n X)^d_0=(F_n X)^d_0\;.
\end{equation*}
For each $k>0$ by inductive hypothesis we also have
\begin{equation*}
    (G_n X)_k = G_{n-1}(F_n X)_k\in\catwg{n-1}\;.
\end{equation*}
To show that $G_n X\in\tawg{n}$ it remains to prove that the induced Segal maps are $(n-1)$-equivalences. Since $F_n X\in\catwg{n}$ there are $(n-1)$-equivalences
\begin{equation*}
    (F_n X)_2 \rw \tens{(F_n X)_1}{(F_n X)^d_0}\;.
\end{equation*}
Using the induction hypotheses c) and d) this induces an $(n-1)$-equivalence
\begin{equation*}
\begin{split}
  (G_n X)_2 & =G_{n-1}(F_n X)_2 \rw G_{n-1} \{\tens{(F_n X)_1}{(F_n X)^d_0}\}\cong \\
    & \cong \tens{(G_n X)_1}{(G_n X)^d_0}\;.
\end{split}
\end{equation*}
Similarly one shows that all other induced Segal maps for $G_n X$ are $(n-1)$-equivalences. We conclude that $G_n X\in\tawg{n}$.
\bk

a) Let $\uk=(\seqc{k}{1}{s})$, $\ur=(\seqc{r}{1}{s})$ in $\dop{s}$ and denote $\uk'=(\seqc{k}{2}{s})$, $\ur'=(\seqc{r}{2}{s})$ and suppose we have a morphism $\uk\rw\ur$ in $\dop{s}$. By factoring this as
\begin{equation*}
    \xymatrix@R=25pt{
    \uk=(k_1,\uk') \ar[rr] \ar[dr] && \ur=(r_1,\ur')\\
    & (r_1,\uk')\ar[ur]
    }
\end{equation*}
we obtain a factorization
\scriptsize
\begin{equation}\label{eq-pro-fta-1}
\xymatrix@C=5pt{
(G_n X)_{\uk 0}=\{G_{n-1}(F_n X)_{k_1}\}_{\uk' 0} \ar[rr]\ar[dr] && \{G_{n-1}(F_n X)_{r_1}\}_{\ur' 0}=(G_n X)_{\ur 0}\\
& \{G_{n-1}(F_n X)_{r_1}\}_{\uk' 0}\ar[ur]
}
\end{equation}
\normalsize
Consider the morphism $(F_n X)_{k_1}\rw (F_n X)_{r_1}$ in $\catwg{n-1}$. Since, by induction hypothesis a), $G_{n-1}:\catwg{n-1}\rw \ftawg{n-1}$ there is a commuting diagram
\begin{equation}\label{eq-pro-fta-2}
\xymatrix@C=50pt@R=40pt{
\{G_{n-1}(F_n X)_{k_1}\}_{\uk' 0} \ar[r] & \{G_{n-1}(F_n X)_{r_1}\}_{\uk' 0}\\
\{G_{n-1}(F_n X)_{k_1}\}^d_{\uk' 0} \ar[r]\ar[u] & \{G_{n-1}(F_n X)_{r_1}\}^d_{\uk' 0}\ar[u]
}
\end{equation}
Since, by induction hypothesis, $G_{n-1}(F_n X)_{r_1}\in\ftawg{n-1}$ we also have a commuting diagram
\begin{equation}\label{eq-pro-fta-3}
\xymatrix@C=50pt@R=40pt{
\{G_{n-1}(F_n X)_{r_1}\}_{\uk' 0} \ar[r] & \{G_{n-1}(F_n X)_{r_1}\}_{\ur' 0}\\
\{G_{n-1}(F_n X)_{r_1}\}^d_{\uk' 0} \ar[r]\ar[u] & \{G_{n-1}(F_n X)_{r_1}\}^d_{\ur' 0}\ar[u]
}
\end{equation}
Combining \eqref{eq-pro-fta-1}, \eqref{eq-pro-fta-2}, \eqref{eq-pro-fta-3} we obtain a commuting diagram
\begin{equation*}
\xymatrix@C=40pt@R=35pt{
(G_n X)_{\uk 0} \ar[r] & (G_n X)_{\ur 0}\\
(G_n X)^d_{\uk 0} \ar[r]\ar[u] & (G_n X)^d_{\ur 0}\ar[u]
}
\end{equation*}
This shows that $G_n X\in\ftawg{n}$. Let $F:X\rw Y$ be a morphism in $\catwg{n}$. Then
 \begin{equation*}
  (F_n F)_{k_1}:(F_n X)_{k_1}\rw (F_n Y)_{k_1}
 \end{equation*}
 is a morphism in $\catwg{n-1}$. Thus by induction hypothesis it induces a morphism
\begin{equation*}
    G_{n-1}(F_n X)_{k_1}\rw G_{n-1}(F_n Y)_{k_1}
\end{equation*}
such that the following diagram commutes:
\begin{equation*}
\xymatrix@R=35pt{
(G_n X)_{\uk 0}=\{G_{n-1}(F_n X)_{k_1}\}_{\uk' 0} \ar[r] & \{G_{n-1}(F_n Y)_{k_1}\}_{\uk' 0}=(G_n Y)_{\uk 0}\\
(G_n X)^d_{\uk 0}\ar[r]\ar[u] & (G_n Y)^d_{\uk 0}\ar[u]
}
\end{equation*}
This shows that $G_n F$ is a morphism in $\ftawg{n}$. In conclusion
\begin{equation*}
    G_n:\catwg{n}\rw \ftawg{n}\;.
\end{equation*}
The fact that $(G_n X)_k=G_{n-1}(F_n X)_k\in\catwg{n-1}$ follows by induction.
\bk

b) The morphism $G_n X\rw X$ is given levelwise  by $G_{n-1} X_k\rw X_k$; this is an $(n-1)$-equivalence for each $k$, hence $G_n X\rw X$ is a $n$-equivalence by Lemma 4.8 of \cite{Pa3}.
\bk

c) Let $F:X\rw Y$ be an $n$-equivalence in $\catwg{n}$. There is a commuting diagram
\begin{equation*}
\xymatrix@R=30pt@C=40pt{
G_n X \ar^{G_n F}[r] \ar[d] & G_n Y \ar[d]\\
X \ar_{F}[r] & Y
}
\end{equation*}
in which the vertical maps and the bottom horizontal map are $n$-equivalences. By Proposition 4.11 of \cite{Pa3} it follows that $G_n F$ is also an $n$-equivalence.
\bk

d) This follows immediately by the analogous properties of $F_n$ and by the inductive hypothesis.

\end{proof}
%
%%%%%%%%%%%%%%%%%%%%%%%%%%%%%%%%%%%%%%%%%%%%%%%%%%%%%%%%%%%%%%%%%%%%%%%%%%%%%%%%%%%%%%%%%%%%%%%%%
\section{From $\ftawg{n}$ to Tamsamani $n$-categories}\label{sec-fta-to-tam}
In this section we define a discretization functor
\begin{equation*}
    \D{n}:\ftawg{n}\rw \ta{n}
\end{equation*}
and we study its properties. The idea of the functor $\D{n}$ is to replace the homotopically discrete sub-objects in $X\in\ftawg{n}$ by their discretization, thus recovering the globularity condition. Because of the canonical property of the homotopically discrete objects in $\ftawg{n}$, this can be done in a functorial way.

The construction of $\D{n}$ is inductive, and we first need to discretize the structure at level 0, as follows.
\begin{definition}\label{def-fta-tam-1}
    Let $\rz:\ftawg{n}\rw\funcat{}{\ftawg{n-1}}$ be given by
    \begin{equation*}
        (\rz X)_k=
        \left\{
          \begin{array}{ll}
            X_0^d, & k=0 \\
            X_k, & k>0\;.
          \end{array}
        \right.
    \end{equation*}
    The face operators $\pt'_0,\pt'_1:X_1\rightrightarrows X_0^d$ are given by $\pt'_i=\zg\pt_i$, $i=0,1$ and the degeneracy $\zs':X_0^d\rw X_1$ by $\zs'=\zs\zg'$ where $\pt_0,\pt_1,\zs$ are the corresponding face and degeneracy operators of $X$, $\zg:X_0\rw X_0^d$ is the discretization map and $\zg':X^d_0\rw X_0$ is a section.
\end{definition}
\begin{remark}\label{rem-fta-tam-1}
    By definition of $\ftawg{n}$, given $f:X\rw Y$ in $\ftawg{n}$ there is a commuting diagram
    \begin{equation}\label{eq-rem-fta-tam-1}
    \xymatrix@C=40pt{
    X_0 \ar^{f_0}[r] & Y_0 \\
    X_0^d \ar^{\zg'(X_0)}[u] \ar[r] & Y_0^d \ar_{\zg'(Y_0)}[u]
    }
    \end{equation}
    and this induces a morphism in $\funcat{}{\ftawg{n-1}}$ $\rz f:\rz X\rw \rz Y$. Thus $\rz$ is a functor. Note that while $\rz X$ could be defined for any $X\in\tawg{n}$, given a morphism $f$ in $\tawg{n}$ since in general \eqref{eq-rem-fta-tam-1} does not commute, one cannot define $\rz f$ as above.
\end{remark}
\begin{lemma}\label{lem-fta-tam-1}
Let $\rz$ be as in Definition \ref{def-fta-tam-1} then
\begin{itemize}
  \item [a)] $\rz:\ftawg{n}\rw \ftawg{n}$.\mk

  \item [b)] $\rz$ is identity on discrete objects and commutes with pullbacks over discrete objects.\mk

  \item [c)] $\p{n}\rz X=\rz \p{n} X$, \quad $\q{n}\rz X=\rz \q{n} X$.\mk

  \item [d)] $\rz$ preserves $n$-equivalences.
\end{itemize}
\end{lemma}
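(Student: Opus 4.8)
The plan is to prove all four parts simultaneously by induction on $n\geq 2$, the case $n=2$ being the (now functorial) version of the construction $D_0$ of the Introduction. The organizing observation is that $\rz$ modifies $X$ only in the first simplicial direction at level $0$: it replaces $X_0$ by its discretization $X_0^d$ and twists the two bottom faces and the bottom degeneracy by $\zg$ and $\zg'$. By contrast, the sections $\zg'(X_{\uk 0})$ that constitute the $\ftawg{n}$-data live over the homotopically discrete objects $X_{\uk 0}$ at level $0$ in directions $s+1\geq 2$. These two features meet only through the first coordinate $k_1$, which is what makes the verification go through.

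First I would check $\rz X\in\tawg{n}$. Weak globularity is immediate since $(\rz X)_0=X_0^d$ is discrete, and all simplicial identities hold because $\zg\zg'=\Id$. As $(\rz X)_0$ is discrete we have $(\rz X)_0^d=X_0^d$, so the induced Segal maps of $\rz X$ are the ordinary Segal maps $X_s\rw\pro{X_1}{X_0^d}{s}$, i.e.\ precisely the induced Segal maps of $X$, hence $(n-1)$-equivalences; together with $(\rz X)_k=X_k\in\ftawg{n-1}$ for $k>0$ (Remark \ref{rem-fta-1}) this gives $\rz X\in\tawg{n}$. For a) it then remains to choose the sections and verify the squares \eqref{eq-def-fta-1}: take $\zg'((\rz X)_{\uk 0})=\zg'(X_{\uk 0})$ when $k_1\geq 1$, where $(\rz X)_{\uk 0}=X_{\uk 0}$, and the identity when $k_1=0$, where $(\rz X)_{\uk 0}$ is discrete. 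Given $\uk\rw\ur$ in $\dop{s}$, the square for $\rz X$ reduces to the square \eqref{eq-def-fta-1} for $X$ when $k_1,r_1\geq 1$, is trivial when $k_1=r_1=0$, and in the mixed cases the morphism of $\rz X$ is $\zg$ (resp.\ $\zg'$) postcomposed (resp.\ precomposed) with the corresponding morphism of $X$, so commutativity follows from the $X$-square for the morphism with altered first coordinate, using $\zg\zg'=\Id$ and functoriality of discretization. This mixed case is the main obstacle, and it is exactly where one needs that the defining family of $\ftawg{n}$ ranges over morphisms whose first coordinate may be $0$.

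The remaining parts are lighter. For b), if $X$ is discrete then $X_0=X_0^d$, so $\rz X=X$; and since pullbacks in $\funcat{}{\ftawg{n-1}}$ are computed levelwise while $\rz$ alters only level $0$ through the discretization, which preserves pullbacks over discrete objects (being built levelwise from $p$, cf.\ Lemma \ref{lem-q-pres-fib-pro}), $\rz$ commutes with pullbacks over discrete objects. For c) I would compare $\p{n}\rz X$ with $\rz\p{n}X$ levelwise: at level $0$ they agree by the identity $\p{n-1}X_0^d=(\p{n-1}X_0)^d$ recorded in Definition \ref{def-wg-ps-cat}, at higher levels both equal $\p{n-1}X_k$, and the twisted bottom operators match because $\p{n-1}$ commutes with $\zg$ and with the chosen sections $\zg'$ (Lemma \ref{lem-fta-1}); the statement for $\q{n}$ is identical, using $\p{n-s-1}X_{\uk 0}=\q{n-s-1}X_{\uk 0}$ on homotopically discrete objects.

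Finally, for d), let $f:X\rw Y$ be an $n$-equivalence. Condition (i) holds because for all $a,b\in X_0^d$ one has $(\rz X)(a,b)=X(a,b)$ on the nose: the discretization map of $\rz X$ is the identity, so the fiber of $X_1\rw X_0^d\times X_0^d$ defining $(\rz X)(a,b)$ coincides with the fiber defining $X(a,b)$; hence $(\rz f)(a,b)=f(a,b)$ is an $(n-1)$-equivalence. Condition (ii) follows from c), which gives $\p{n}(\rz f)=\rz(\p{n}f)$, together with the inductive hypothesis that $\rz$ preserves $(n-1)$-equivalences. This completes the inductive step.
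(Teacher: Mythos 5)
Your proposal is correct and follows essentially the same route as the paper's proof: induction on $n$, verification of the $\tawg{n}$ conditions via the induced Segal maps (using $(\rz X)_0^d=X_0^d$), the same case analysis on whether $k_1$ and $r_1$ vanish to establish the squares \eqref{eq-def-fta-1} (including the mixed cases resolved by $\zg\zg'=\Id$ and the section property), and parts b), c), d) argued exactly as in the paper by levelwise comparison and the on-the-nose identification $(\rz f)(a,b)=f(a,b)$ together with $\p{n}\rz f=\rz\p{n}f$. The only point the paper spells out that you leave implicit is the verification that $\rz F$ satisfies the morphism condition \eqref{eq-def-fta-2} for $F$ a morphism in $\ftawg{n}$, but this follows by the same trivial case split you already use ($k_1\neq 0$: the square is the one for $F$; $k_1=0$: both objects are discrete with identity sections), so it is a cosmetic omission rather than a gap.
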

\begin{proof}
By induction on $n$. Let $X\in \ftawg{2}$. Then $(\rz X)_0=X_0^d$ is discrete while for each $k\geq 2$ the Segal maps
\begin{equation*}
    (\rz X)_k=X_k \xrw{\sim} \pro{X_1}{X_0^d}{k}=\pro{(\rz X)_1}{(\rz X)_0}{k}
\end{equation*}
are equivalences of categories since $X\in \ftawg{2}$. Thus, by definition, $\rz X\in\ta{2}\subset \ftawg{2}$.

The proof of b) is immediate. We have
\begin{equation*}
    \p{2}\rz X =\p{2}X = \rz \p{2} X
\end{equation*}
and similarly for $\q{2}\rz X$, so c) holds. Given a 2-equivalence $f:X\rw Y$ in $\ftawg{2}$, for each $a,b\in X_0^d$, $(\rz f)(a,b)=f(a,b)$ is a 2-equivalence and $\p{2}\rz f=\p{2} f$ is a 2-equivalence. So by definition $\p{2}f$ is a 2-equivalence.

Suppose, inductively, that the lemma holds for $(n-1)$ and let $X\in \ftawg{n}$.

\bk

a) Note that $\rz X\in\tawg{n}$. In fact, by construction $(\rz X)_k\in\tawg{n-1}$ for all $k\geq 0$. For each $k\geq 2$ the induced Segal maps for $\rz X$ are:
\begin{equation*}
    (\rz X)_k=X_k \rw \pro{X_1}{X_0^d}{k}=\pro{(\rz X)_1}{(\rz X)_0}{k}
\end{equation*}
and these are $(n-1)$-equivalences because $X\in \ftawg{n}$. To show that $\rz X \in \ftawg{n}$ note that, given $\uk=(\seqc{k}{1}{s})$ and $\ur=(\seqc{r}{1}{s})$ in $\dop{s}$, $1\leq s \leq n-2$ and a morphism $\uk\rw\ur$ in $\dop{s}$, the following diagram commutes
\begin{equation}\label{eq-lem-fta-tam-1}
\xymatrix@C=40pt{
(\rz X)_{\uk 0} \ar[r] & (\rz X)_{\ur 0} \\
(\rz X)^d_{\uk 0} \ar[r]\ar[u] & (\rz X)^d_{\ur 0} \ar[u]
}
\end{equation}
In fact, if $k_1\neq 0$ and $r_1\neq 0$ diagram \eqref{eq-lem-fta-tam-1} coincides with
\begin{equation*}
\xymatrix@C=40pt{
X_{\uk 0} \ar[r] & X_{\ur 0} \\
X^d_{\uk 0} \ar[r]\ar[u] & X^d_{\ur 0} \ar[u]
}
\end{equation*}
and this commutes since $X\in \ftawg{n}$.

If $k_1=0$ and $r_1\neq 0$ diagram \eqref{eq-lem-fta-tam-1} coincides with
\begin{equation}\label{eq-lem-fta-tam-2}
\xymatrix@C=40pt{
X_0^d \ar[r] \ar@{=}[d] & X_{\ur 0}\\
X_0^d \ar[r]  & X^d_{\ur 0} \ar[u]
}
\end{equation}
which is the composite of
\begin{equation}\label{eq-lem-fta-tam-3}
\xymatrix@R=35pt{
& X_{0 k_2...k_s 0} \ar[r] & X_{\ur 0}\\
X_0^d \ar[ur]\ar[r] & X^d_{0 k_2...k_s 0} \ar[r]\ar[u] & X^d_{\ur 0} \ar[u]
}
\end{equation}
The right square in \eqref{eq-lem-fta-tam-3} commutes since $X\in \ftawg{n}$ and the left triangle commutes by construction. Thus \eqref{eq-lem-fta-tam-2} commutes.

If $k_1\neq 0$ and $r_1=0$ diagram \eqref{eq-lem-fta-tam-1} is
\begin{equation*}
\xymatrix@C=40pt{
X_{\uk 0} \ar[r]  & X^d_{\ur 0}\\
X^d_{\uk 0} \ar[r] \ar[u] & X^d_{\ur 0} \ar@{=}[u]
}
\end{equation*}
and this commutes since it is the composite of
\begin{equation*}
\xymatrix@R=35pt@C=40pt{
X_{\uk 0} \ar[r]  & X_{\ur 0} \ar[dr] & \\
X^d_{\uk 0} \ar[r] \ar[u] & X^d_{\ur 0} \ar[u] \ar@{=}[r] &  X^d_{\ur 0}
}
\end{equation*}
where the left square commutes since $X\in \ftawg{n}$ and the right triangle commutes since $X^d_{\ur 0}\rw X_{\ur 0}$ is a section of $X_{\ur 0}\rw X^d_{\ur 0}$. Hence we conclude that $\rz X\in \ftawg{n}$.

Given $F:X\rw Y$ in $\ftawg{n}$ we have the commuting diagram
\begin{equation}\label{eq-lem-fta-tam-4}
\xymatrix@C=40pt{
(\rz X)_{\uk 0} \ar[r] & (\rz Y)_{\uk 0} \\
(\rz X)^d_{\uk 0} \ar[r]\ar[u] & (\rz Y)^d_{\ur 0} \ar[u]
}
\end{equation}
In fact, when $k_1\neq 0$ this coincides with
\begin{equation*}
\xymatrix@R=35pt@C=40pt{
X_{\uk 0} \ar[r] & Y_{\uk 0} \\
X^d_{\uk 0} \ar[r]\ar[u] & Y^d_{\ur 0} \ar[u]
}
\end{equation*}
which commutes since $F$ is a morphism in $\ftawg{n}$. When $k_1=0$ diagram \eqref{eq-lem-fta-tam-4} is
\begin{equation*}
\xymatrix@R=35pt@C=40pt{
X_0^d \ar[r] & Y_0^d \\
X^d_{0} \ar@{=}[u]\ar[r] & Y^d_{0}\ar@{=}[u]
}
\end{equation*}
In conclusion, diagram \eqref{eq-lem-fta-tam-4} commutes, showing that $\rz F$ is a morphism in $\ftawg{n}$.
\bk

b) This is immediate by the definition of $\rz$ since, if $X\rw Z \lw Y$ is a pullback in $\ftawg{n}$ with $Z$ discrete, $(X\tiund{Z}Y)^d_0=X_0^d\tiund{z}Y_0^d$ by Lemma 3.10 of \cite{Pa1}.

\bk

c) For each $k>0$,
\begin{equation*}
    (\p{n}\rz X)_k=\p{n-1}(\rz X)_k=\p{n-1}X_k =\rz (\p{n}X)_k\;.
\end{equation*}
When $k=0$,
\begin{equation*}
    (\p{n}\rz X)_0= X_0^d =(\p{n}X)^d_0=(\rz\p{n}X)_0\;.
\end{equation*}
Similarly for $\q{n}X$.

\bk

d) Let $f:X\rw Y$ be an $n$-equivalence. For all $a,b\in (\rz X)^d_0=X_0^d$, $(\rz f)(a,b)=f(a,b)$ is a $(n-1)$-equivalence. Also by c) and the induction hypothesis $\p{n}\rz f=\rz \p{n}f$ is a $(n-1)$-equivalence. Thus by definition $\rz f$ is an $n$-equivalence.
\end{proof}
\begin{proposition}\label{pro-fta-tam-1}
There is a functor
\begin{equation*}
    \Dn:\ftawg{n}\rw \ta{n}
\end{equation*}
defined inductively by
\begin{equation*}
    D_2=\rz, \qquad \Dn=\ovl{D}_{n-1}\circ \rz \qquad \text{for }\; n>2
\end{equation*}
where $\rz$ is as in Lemma \ref{lem-fta-tam-1}, such that
\begin{itemize}
  \item [a)] $\Dn$ is identity on objects and commutes with pullbacks over discrete objects.\mk

  \item [b)] $\p{n}\Dn X=\Dnm \p{n} X$,\quad $\q{n}\Dn X=\Dnm \q{n} X$.\mk

  \item [c)] $\Dn$ preserves $n$-equivalences.
\end{itemize}
\end{proposition}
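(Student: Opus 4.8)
The plan is to proceed by induction on $n$, with the base case $n=2$ supplied directly by Lemma~\ref{lem-fta-tam-1}: there $D_2=\rz$ is shown to land in $\ta{2}$, to be the identity on discrete objects and to commute with pullbacks over discrete objects (part b), to satisfy $\p{2}\rz=\rz\p{2}$ and $\q{2}\rz=\rz\q{2}$ (part c, which is b) with $D_1=\Id$), and to preserve $n$-equivalences (part d). So I assume that $\Dnm:\ftawg{n-1}\rw\ta{n-1}$ is a well-defined functor enjoying a)--c), and must extend everything to $\Dn=\ovl{D}_{n-1}\circ\rz$.

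The first and most substantial task is to check that $\Dn X=\ovl{D}_{n-1}(\rz X)$ really lands in $\ta{n}$; this is the whole point of the discretization. By Lemma~\ref{lem-fta-tam-1}a), $\rz X\in\ftawg{n}$ with $(\rz X)_0=X_0^d$ discrete, and by the inductive well-definedness each $(\Dn X)_k=D_{n-1}((\rz X)_k)\in\ta{n-1}$. The Tamsamani discreteness conditions then hold in two layers: the outermost one, $(\Dn X)_0=D_{n-1}(X_0^d)=X_0^d$, is discrete since $D_{n-1}$ fixes discrete objects, while the inner conditions on $(\Dn X)_{1\cdots10}$ are inherited from $(\Dn X)_k\in\ta{n-1}$. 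The heart of the matter is the Segal condition. Because $(\Dn X)_0$ is discrete, the induced Segal maps of $\Dn X$ are the ordinary maps $(\Dn X)_k\rw\pro{(\Dn X)_1}{(\Dn X)_0}{k}$; since $D_{n-1}$ commutes with pullbacks over the discrete object $X_0^d$ (inductive a), these are identified with $D_{n-1}$ applied to the induced Segal maps $X_k\rw\pro{X_1}{X_0^d}{k}$ of $\rz X$. The latter are $(n-1)$-equivalences because $\rz X\in\ftawg{n}\subset\tawg{n}$ (Lemma~\ref{lem-fta-tam-1}a), and $D_{n-1}$ preserves $(n-1)$-equivalences (inductive c); hence the Segal maps of $\Dn X$ are $(n-1)$-equivalences and $\Dn X\in\ta{n}$.

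For a), $\Dn$ fixes discrete objects because $\rz$ does (Lemma~\ref{lem-fta-tam-1}b) and $D_{n-1}$ does so at every level, and it commutes with pullbacks over discrete objects because both $\rz$ (Lemma~\ref{lem-fta-tam-1}b) and $D_{n-1}$ (inductive a) do, such pullbacks being computed levelwise. For b) I argue levelwise, combining the inductive identity $\p{n-1}D_{n-1}=D_{n-2}\p{n-1}$ with $\p{n}\rz=\rz\p{n}$ (Lemma~\ref{lem-fta-tam-1}c): for each $k$, $(\p{n}\Dn X)_k=\p{n-1}D_{n-1}((\rz X)_k)=D_{n-2}\p{n-1}((\rz X)_k)=D_{n-2}((\rz\p{n}X)_k)=(\Dnm\p{n}X)_k$, so $\p{n}\Dn X=\Dnm\p{n}X$; the computation for $\q{n}$ is identical, using Lemma~\ref{lem-q-pres-fib-pro} and the $\q$-analogues of the inductive hypotheses.

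The last part c) is the one I expect to require the most care. Given an $n$-equivalence $f:X\rw Y$ in $\ftawg{n}$, condition (ii) is immediate: by b) $\p{n}\Dn f=\Dnm\p{n}f$, and $\p{n}f$ is an $(n-1)$-equivalence, so inductive c) applies. For condition (i), the key observation is that $\rz$ leaves the hom-objects unchanged, $(\rz X)(a,b)=X(a,b)$, since the fibre of $(\rz X)_1=X_1$ along $(\zg\pt_0,\zg\pt_1)$ is by definition $X(a,b)$. As $D_{n-1}$ commutes with pullbacks over the discrete object $X_0^d\times X_0^d$ and fixes the point $(a,b)$, it follows that $(\Dn X)(a,b)=D_{n-1}(X(a,b))$ and $(\Dn f)(a,b)=D_{n-1}(f(a,b))$; since $f$ is an $n$-equivalence, $f(a,b)$ is an $(n-1)$-equivalence, whence inductive c) gives that $(\Dn f)(a,b)$ is one too. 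The delicate point is that $X(a,b)$ and $f(a,b)$ must lie in $\ftawg{n-1}$ for $D_{n-1}$ and its property c) to apply; this follows from $X_1\in\ftawg{n-1}$ (Remark~\ref{rem-fta-1}), its evident extension to morphisms, and the closure of $\ftawg{n-1}$ under pullbacks over discrete objects invoked in Lemma~\ref{lem-fta-tam-1}b). With (i) and (ii) established, $\Dn f$ is an $n$-equivalence, completing the induction.
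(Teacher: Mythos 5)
Your proposal is correct and follows essentially the same inductive scheme as the paper's proof: base case from Lemma \ref{lem-fta-tam-1}, the Segal condition for $\Dn X$ obtained by applying $\Dnm$ to the induced Segal maps $X_k\rw\pro{X_1}{X_0^d}{k}$ using inductive hypotheses a) and c), the levelwise computation for b), and for c) the identities $(\Dn f)(a,b)=\Dnm f(a,b)$ and $\p{n}\Dn f=\Dnm\p{n}f$. You in fact supply slightly more justification than the paper at two points the paper leaves implicit --- the identification $(\Dn X)(a,b)=\Dnm(X(a,b))$ via commutation with pullbacks over discrete objects, and the membership $X(a,b)\in\ftawg{n-1}$ needed to apply $\Dnm$ --- while your citation of Lemma \ref{lem-q-pres-fib-pro} for the $\q{n}$ case is superfluous (Lemma \ref{lem-fta-tam-1}c) already gives $\q{n}\rz X=\rz\q{n}X$).
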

\begin{proof}
By induction on $n$. It holds for $n=2$ by Lemma  \ref{lem-fta-tam-1}. Suppose, inductively, that it holds for $(n-1)$ and let $X\in\ftawg{n}$. Then by induction hypothesis a)
\begin{equation*}
    (\Dn X)_k =
    \left\{
      \begin{array}{ll}
        \Dnm X_0^d=X_0^d & k=0 \\
        \Dnm X_k, & k>0\;.
      \end{array}
    \right.
\end{equation*}
Thus by induction hypothesis $(\Dn X)_k\in\ta{n-1}$ for all $k\geq 0$ with $(\Dn X)_0$ discrete.

To show that $\Dn X\in\ta{n}$ it remains to show that the Segal maps are $(n-1)$-equivalences. Since $X\in\ftawg{n}$, for each $k\geq 2$ the map
\begin{equation*}
    \mu_k: X_k\rw \pro{X_1}{X_0^d}{k}
\end{equation*}
is a $(n-1)$-equivalence. By inductive hypotheses a) and c) this induces a $(n-1)$-equivalence
\begin{align*}
&\Dnm \mu_k:\Dnm X_k=(\Dn X)_k \rw \Dnm(\pro{X_1}{X_0^d}{k})\cong\\
& \cong \pro{(\Dn X)_1}{(\Dn X)_0}{k}\;.
\end{align*}
This shows that the Segal maps of $\Dn X$ are $(n-1)$-equivalences. We conclude that $\Dn X\in\ta{n}$.
\bk

a) This follows from Lemma \ref{lem-fta-tam-1} and the inductive hypothesis.
\bk

b) For each $k\geq 0$, by inductive hypothesis,
\begin{align*}
&(\p{n}\Dn X)_k = \p{n-1}(\Dn X)_k = \p{n-1}\Dnm X_k= \\
& = D_{n-2}\p{n-1}X_k = (\Dnm \p{n}X)_k\;.
\end{align*}
The proof for $\q{n}\Dn X$ is similar.
\bk

c) Let $f:X\rw Y$ be an $n$-equivalence in $\ftawg{n}$. For each $a,b\in(\Dn X)_0=X_0^d$ we have
\begin{equation*}
    (\Dn f)(a,b)=\Dnm f(a,b)
\end{equation*}
and this is a $(n-1)$-equivalence by the inductive hypothesis applied to the $(n-1)$-equivalence $f(a,b)$. Further, by b) $\p{n}\Dn f=\Dnm \p{n} f$ is also a $(n-1)$-equivalence by inductive hypothesis applied to the $(n-1)$-equivalence $\p{n}f$. This shows that $\Dn f$ is a $n$-equivalence.
\end{proof}
\begin{lemma}\label{lem-fta-tam-2}
Let $X\in\ftawg{n}$ and $\uk\in\dop{n-1}$ be such that $k_j\neq 0$ for all $1\leq j\leq n-1$. Then $(\Dn X)_{\uk}=X_{\uk}$.
\end{lemma}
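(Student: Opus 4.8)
The plan is to prove this by induction on $n$, peeling off the first simplicial index, with the nonvanishing of $k_1$ being exactly what renders $\rz$ invisible in the first slot. The base case $n=2$ is immediate: since $D_2=\rz$ and $\uk=(k_1)$ with $k_1\neq 0$, the definition of $\rz$ in Definition \ref{def-fta-tam-1} gives $(D_2 X)_{k_1}=(\rz X)_{k_1}=X_{k_1}$, because $\rz$ alters only the object sitting at level $0$.

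For the inductive step, I would assume the statement for $n-1$ and take $X\in\ftawg{n}$ with $\uk=(k_1,\ldots,k_{n-1})$ all of whose entries are nonzero. First I would use the defining formula $\Dn=\ovl{D}_{n-1}\circ\rz$ together with $k_1\neq 0$ to evaluate the object in the first direction. Since $(\rz X)_{k_1}=X_{k_1}$, and $\ovl{D}_{n-1}$ applies $\Dnm$ levelwise (Definition \ref{def-fun-smacat}), this yields $(\Dn X)_{k_1}=\Dnm\bigl((\rz X)_{k_1}\bigr)=\Dnm X_{k_1}$.

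Next I would invoke Remark \ref{rem-fta-1}, which guarantees $X_{k_1}\in\ftawg{n-1}$, so that the inductive hypothesis applies to $X_{k_1}$. Writing $\uk'=(k_2,\ldots,k_{n-1})\in\dop{n-2}$ — a multi-index whose entries are again all nonzero — the inductive hypothesis gives $(\Dnm X_{k_1})_{\uk'}=(X_{k_1})_{\uk'}$. Chaining these identities produces
\begin{equation*}
(\Dn X)_{\uk}=\bigl((\Dn X)_{k_1}\bigr)_{\uk'}=(\Dnm X_{k_1})_{\uk'}=(X_{k_1})_{\uk'}=X_{\uk},
\end{equation*}
which is the desired conclusion.

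This argument is essentially bookkeeping, so I do not anticipate a serious obstacle; the only points requiring care are notational. Concretely, one must justify decomposing $(\Dn X)_{\uk}$ along the first direction — legitimate because $\Dn X\in\ta{n}\subset\funcat{}{\ta{n-1}}$ — and one must observe that the hypothesis "all $k_j\neq 0$" is inherited by $\uk'$, so that the inductive hypothesis genuinely applies to $X_{k_1}$. The crux of the computation is precisely that $k_1\neq 0$ forces $(\rz X)_{k_1}=X_{k_1}$, which is what allows $\rz$ to be ignored in the outermost slot.
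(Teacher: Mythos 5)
Your proof is correct and takes essentially the same route as the paper's: induction on $n$, with the base case $n=2$ resting on the fact that $D_2=\rz$ fixes all levels $k\neq 0$, and the inductive step splitting $\uk=(k_1,\uk')$ and applying the hypothesis to $X_{k_1}\in\ftawg{n-1}$ (via Remark \ref{rem-fta-1}). The only difference is cosmetic: you make explicit the intermediate identity $(\Dn X)_{k_1}=\Dnm\bigl((\rz X)_{k_1}\bigr)=\Dnm X_{k_1}$, which the paper leaves implicit in the chain $(\Dn X)_{\uk}=(\Dnm X_{k_1})_{\uk'}=(X_{k_1})_{\uk'}=X_{\uk}$.
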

\begin{proof}
By induction on $n$. It clearly holds for $n=2$ since $D_2 X=\rz X$ has $(D_2 X)_k=X_k$ for all $k\neq 0$. Suppose it holds for $(n-1)$ and let $\uk\in\dop{n-1}$ be as in the hypothesis. Denote $\ur=(\seqc{k}{2}{n-1})$. Then by inductive hypothesis applied to $X_{k_1}$ we have
\begin{equation*}
    (\Dn X)_{\uk}=(\Dnm X_{k_1})_{\ur}=(X_{k_1})_{\ur}=X_{\uk}\;.
\end{equation*}
\end{proof}
\begin{proposition}\label{pro-fta-tam-2}
Let $X\in\ftawg{n}$, then $\Qn\Dn X\cong\Qn X$.
\end{proposition}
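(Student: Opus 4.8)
The plan is to argue by induction on $n$, exploiting the factorization of the rigidification functor. Recall that for $n>2$ one has $\Qn=\St\circ Tr_n\circ P_n$, while $Q_2=\St\circ Tr_2$, and that $\St$, $Tr_n$ and $P_n$ are all functors. Since $\Dn X\in\ta{n}\subset\tawg{n}$ and $X\in\ftawg{n}\subset\tawg{n}$, both $P_n\Dn X$ and $P_nX$ lie in $\lta{n}$, so it suffices to produce an isomorphism $Tr_nP_n\Dn X\cong Tr_nP_nX$ in $\segpsc{n-1}{\Cat}$ and then apply $\St$. To obtain this isomorphism I would invoke Lemma \ref{lem-property-lta}, which reduces the comparison of $Tr_n$ on two objects of $\lta{n}$ to the comparison of their \emph{discrete data}: the objects $(-)_{\uk}$ for $\uk\in\dop{n-1}$ with all entries nonzero, and the discretizations $(-)^d_{\uk}$ for $\uk$ with at least one zero entry.

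For the base case $n=2$ (where $P_2$ is absent and $\lta{2}=\tawg{2}$) I would apply Lemma \ref{lem-property-lta} directly to $\Dn X=\rz X$ and $X$. Here $\uk$ is a single index $k$: when $k=0$ we have $(\rz X)^d_0=(X_0^d)^d=X_0^d=X^d_0$, and when $k>0$ we have $(\rz X)_k=X_k$ by Definition \ref{def-fta-tam-1}. Thus $Tr_2\Dn X\cong Tr_2 X$, and applying $\St$ gives $\Qn\Dn X\cong\Qn X$.

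For the inductive step I would verify the hypotheses of Lemma \ref{lem-property-lta} for $P_n\Dn X$ and $P_nX$ using three inputs. First, $(\Dn X)_{\uk}=X_{\uk}$ whenever all $k_j\neq 0$ (Lemma \ref{lem-fta-tam-2}). Second, $\q{n}\Dn X=\Dnm\q{n}X$ (Proposition \ref{pro-fta-tam-1}(b)), and since $\q{n}X\in\ftawg{n-1}$ by Lemma \ref{lem-fta-1}, the inductive hypothesis applied to $\q{n}X$ yields $\Qnm\q{n}\Dn X=\Qnm\Dnm\q{n}X\cong\Qnm\q{n}X$. Third, for $\uk$ containing a zero the object $X_{\uk}$ is homotopically discrete and $\Dn$ discretizes precisely these substructures, so $(\Dn X)^d_{\uk}\cong X^d_{\uk}$. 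Feeding these into the defining pullbacks
\begin{equation*}
P_n\Dn X=\Dn X\tiund{\di{n}\q{n}\Dn X}\di{n}\Qnm\q{n}\Dn X,
\qquad
P_nX=X\tiund{\di{n}\q{n}X}\di{n}\Qnm\q{n}X,
\end{equation*}
and using that pullbacks are computed levelwise, I would conclude that $P_n\Dn X$ and $P_nX$ have isomorphic discrete data. Lemma \ref{lem-property-lta} then gives $Tr_nP_n\Dn X\cong Tr_nP_nX$, and $\St$ completes the induction.

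The step I expect to be the main obstacle is the control of the pullback $P_n$ on the discrete data. The base object $\di{n}\q{n}X$ is discrete only in the outer simplicial direction, not globally, so passing from matching data on the three legs of the cospan to matching data on the pullback is not formal: one must know that the isomorphism classes of objects functor $p$ (equivalently the discretization) commutes with the pullbacks in play, which requires the relevant maps to be isofibrations and uses Lemma \ref{lem-gen-const-1}. One must also check that the isomorphism $\Qnm\q{n}\Dn X\cong\Qnm\q{n}X$ furnished by the inductive hypothesis is compatible with the section maps $s_{n-1}$ landing in $\di{n}\q{n}(-)$, so that the componentwise isomorphisms assemble into a matching of discrete data that respects the pullback structure. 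Establishing this compatibility is the technical heart of the argument.
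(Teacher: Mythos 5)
Your proposal is correct and follows essentially the same route as the paper: induction on $n$, base case via Lemma \ref{lem-property-lta} applied to $\rz X$ and $X$, and inductive step applying Lemma \ref{lem-property-lta} to $P_n\Dn X$ and $P_n X$ using exactly the three inputs you name (Lemma \ref{lem-fta-tam-2}, Proposition \ref{pro-fta-tam-1}(b), and the inductive hypothesis on $\q{n}X\in\ftawg{n-1}$ via Lemma \ref{lem-fta-1}), followed by $\St$. The one divergence is that your anticipated ``technical heart'' dissolves in the paper's treatment: the levelwise pullbacks defining $(P_n\Dn X)_{\uk}$ and $(P_n X)_{\uk}$ have discrete categories $dq(\Dn X)_{\uk}$ and $dqX_{\uk}$ as bases, so for $\uk$ with a zero entry the discreteness of $(\Dn X)_{\uk}$ makes the comparison leg an isomorphism outright and $p$ commutes with pullbacks over discrete objects without any isofibration hypothesis or appeal to Lemma \ref{lem-gen-const-1} (though your route would also go through, since any functor into a discrete category is trivially an isofibration), while Lemma \ref{lem-property-lta} asks only for objectwise isomorphisms of the discrete data, so no compatibility of the inductive isomorphism with the section maps $s_{n-1}$ needs to be checked.
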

\begin{proof}
By induction on $n$. Let $X\in\ftawg{2}$. It is immediate that $\rz X$ and $X$ satisfy the hypotheses of Lemma \ref{lem-property-lta} so that
\begin{equation*}
    Tr_2 \rz X\cong Tr_2 X\;.
\end{equation*}
Hence
\begin{equation*}
    Q_2D_2 X=\St Tr_2 \rz X\cong \St Tr_2 X=Q_2 X\;.
\end{equation*}
Suppose, inductively, the statement holds for $(n-1)$ and let $X\in\ftawg{n}$. We claim that $P_n\Dn X$ and $P_n X$ satisfy the hypotheses of Lemma \ref{lem-property-lta}. In fact, by definition of $P_n$ there is a pullback in $\funcat{n-1}{\Cat}$
\begin{equation}\label{eq-lem-fta-tam-5}
\xymatrix@R=35pt{
P_n\Dn X \ar[r]\ar[d] & \Dn X\ar[d]\\
\di{n}\Qnm \q{n}\Dn X \ar[r] & \di{n}\q{n}\Dn X
}
\end{equation}
On the other hand, by Proposition \ref{pro-fta-tam-1} and the inductive hypothesis
\begin{equation*}
    \Qnm \q{n}\Dn X=\Qnm \Dnm \q{n}X=\Qnm \q{n}X
\end{equation*}
so that \eqref{eq-lem-fta-tam-5} coincides with
\begin{equation}\label{eq-lem-fta-tam-6}
\xymatrix@R=35pt{
P_n\Dn X \ar[r]\ar[d] & \Dn X\ar[d]\\
\di{n}\Qnm \q{n} X \ar[r] & \di{n}\q{n}\Dn X
}
\end{equation}
We also have a pullback in $\funcat{n-1}{\Cat}$
\begin{equation}\label{eq-lem-fta-tam-7}
\xymatrix@R=35pt{
P_n X \ar[r]\ar[d] & X\ar[d]\\
\di{n}\Qnm \q{n} X \ar[r] & \di{n}\q{n} X
}
\end{equation}
Since pullbacks in $\funcat{n-1}{\Cat}$ are computed pointwise, for each $\uk\in\dop{n-1}$ \eqref{eq-lem-fta-tam-6} and \eqref{eq-lem-fta-tam-7} give rise to pullbacks in $\Cat$
\begin{equation}\label{eq-lem-fta-tam-8}
\xymatrix@R=35pt{
(P_n \Dn X)_{\uk} \ar[r]\ar[d] & (\Dn X)_{\uk}\ar[d]\\
d(\Qnm \q{n} X)_{\uk} \ar[r] & dq(\Dn X)_{\uk}
}
\end{equation}
\begin{equation}\label{eq-lem-fta-tam-9}
\xymatrix@R=35pt{
(P_n X)_{\uk} \ar[r]\ar[d] & (X)_{\uk}\ar[d]\\
d(\Qnm \q{n} X)_{\uk} \ar[r] & dq(X)_{\uk}
}
\end{equation}
If $k_j=0$ for some $j$, then $(\Dn X)_{\uk}$ is discrete (since $\Dn X\in\ta{n}$) hence the right vertical map in \eqref{eq-lem-fta-tam-8} is an isomorphism, and thus so is the left vertical map in \eqref{eq-lem-fta-tam-8}. That is
\begin{equation*}
    (P_n \Dn X)_{\uk}\cong d(\Qnm \q{n} X)_{\uk}
\end{equation*}
so that
\begin{equation}\label{eq-lem-fta-tam-10}
    p(P_n \Dn X)_{\uk}\cong (\Qnm \q{n} X)_{\uk}\;.
\end{equation}
Further, $X_{\uk}\in\cathd{}$ so $q X_{\uk}=p X_{\uk}$. Thus from \eqref{eq-lem-fta-tam-9}, using the fact that $p$ commutes with pullbacks over discrete objects, we obtain
\begin{equation}\label{eq-lem-fta-tam-11}
    p(P_n X)_{\uk}\cong (\Qnm \q{n} X)_{\uk}\;.
\end{equation}
It follows from \eqref{eq-lem-fta-tam-10} and \eqref{eq-lem-fta-tam-11} that
\begin{equation*}
    (P_n \Dn X)^d=dp(P_n \Dn X)_{\uk}\cong dp(P_n X)_{\uk}=(P_n X)^d_{\uk}\;.
\end{equation*}
This proves hypothesis i) in Lemma \ref{lem-property-lta}. Let $\uk\in\dop{n-1}$ be such that $k_j\neq 0$ for all $j$. Then by Lemma \ref{lem-fta-tam-2} $(\Dn X)_{\uk}=X_{\uk}$ for all $\uk$. Hence the right vertical maps in \eqref{eq-lem-fta-tam-8} and \eqref{eq-lem-fta-tam-9} coincide. It follows that
\begin{equation*}
    (P_n\Dn X)_{\uk} \cong (P_n X)_{\uk}\;.
\end{equation*}
This proves hypothesis ii) of Lemma \ref{lem-property-lta} that
\begin{equation*}
    Tr_n P_n \Dn X\cong Tr_n P_n X
\end{equation*}
which implies
\begin{equation*}
    \Qn \Dn X \cong \St Tr_n P_n \Dn X\cong \St Tr_n P_n X\cong \Qn X\;.
\end{equation*}
\end{proof}
\begin{definition}\label{def-disc-func}
Define the discretization functor $\Discn:\catwg{n}\rw \ta{n}$ to be the composite
\begin{equation*}
    \catwg{n}\xrw{G_n}\ftawg{n}\xrw{\Dn}\ta{n}\;.
\end{equation*}
\end{definition}
\begin{theorem}\label{the-disc-func}
\
  \begin{itemize}
    \item [a)] $\Discn$ is identity on discrete objects and commutes with pullbacks over discrete objects.\mk

    \item [b)] For each $X\in\catwg{n}$ there is a zig-zag of $n$-equivalences in $\tawg{n}$ between $X$ and $\Discn X$.\mk

    \item [c)] $\Discn$ preserves $n$-equivalences.
  \end{itemize}
\end{theorem}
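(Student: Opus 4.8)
The plan is to obtain all three statements by composing the properties of the two functors that constitute $\Discn=\Dn\circ G_n$, namely Proposition \ref{pro-fta-1} for $G_n:\catwg{n}\rw\ftawg{n}$ and Proposition \ref{pro-fta-tam-1} for $\Dn:\ftawg{n}\rw\ta{n}$, supplemented by the genuinely substantive input Proposition \ref{pro-fta-tam-2} and the rigidification functor of Theorem \ref{the-funct-Qn}. Parts a) and c) will be pure bookkeeping; the content of the theorem lives in the zig-zag of b).

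For part a), I would first note that $G_n$ is the identity on discrete objects (Proposition \ref{pro-fta-1} d), so that if $X$ is discrete then $G_n X=X$ is again discrete and hence $\Dn G_n X=\Dn X=X$ by Proposition \ref{pro-fta-tam-1} a); thus $\Discn$ is the identity on discrete objects. For a pullback in $\catwg{n}$ over a discrete object $Z$, applying $G_n$ yields a pullback over $Z$ (still discrete) by Proposition \ref{pro-fta-1} d), to which $\Dn$ applies and which it preserves by Proposition \ref{pro-fta-tam-1} a); hence $\Discn$ commutes with pullbacks over discrete objects. Part c) is then immediate: $G_n$ preserves $n$-equivalences by Proposition \ref{pro-fta-1} c) and $\Dn$ preserves them by Proposition \ref{pro-fta-tam-1} c), so the composite does as well.

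The heart of the argument is part b). Writing $Y=G_n X\in\ftawg{n}$, so that $\Discn X=\Dn Y$, I would assemble the zig-zag as follows. By Proposition \ref{pro-fta-1} b) there is an $n$-equivalence $Y\rw X$ in $\tawg{n}$. By Theorem \ref{the-funct-Qn} the rigidification counits $s_n(Y):\Qn Y\rw Y$ and $s_n(\Dn Y):\Qn\Dn Y\rw\Dn Y$ are $n$-equivalences in $\tawg{n}$, these being legitimate since $Y\in\ftawg{n}$ and $\Dn Y\in\ta{n}$ both sit inside $\tawg{n}$. Finally, Proposition \ref{pro-fta-tam-2} supplies an isomorphism $\Qn\Dn Y\cong\Qn Y$, which is in particular an $n$-equivalence. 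Stringing these together gives the desired zig-zag of $n$-equivalences in $\tawg{n}$
\begin{equation*}
X\;\xlw{\ }\;Y\;\xlw{s_n(Y)}\;\Qn Y\;\xrw{\cong}\;\Qn\Dn Y\;\xrw{s_n(\Dn Y)}\;\Dn Y=\Discn X\,,
\end{equation*}
connecting $X$ and $\Discn X$.

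I do not expect a serious obstacle here, since the hard work has already been isolated in the earlier results; the real leverage is Proposition \ref{pro-fta-tam-2}, i.e. the fact that discretization leaves the rigidification unchanged up to isomorphism. The only point requiring care in assembling b) is that every object and arrow in the chain must be read in the common ambient category $\tawg{n}$, using $\catwg{n}\subset\tawg{n}$ (Remark \ref{rem-wg-ps-cat}), $\ftawg{n}\subset\tawg{n}$ and $\ta{n}\subset\tawg{n}$, so that the functor $\Qn$ and its counit $s_n$ apply uniformly to $Y$ and to $\Dn Y$.
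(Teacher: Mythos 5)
Your proposal is correct and follows essentially the same route as the paper: parts a) and c) by composing Propositions \ref{pro-fta-1} and \ref{pro-fta-tam-1}, and part b) by combining Proposition \ref{pro-fta-tam-2} (applied to $G_nX\in\ftawg{n}$) with the counits $s_n$ of Theorem \ref{the-funct-Qn} and the $n$-equivalence $G_nX\rw X$. The paper merely compresses your zig-zag by treating $\Qn\Dn G_nX\cong\Qn G_nX$ as an identification and composing the last two arrows, yielding $\Discn X\lw \Qn\Discn X\rw X$, which is your chain up to these cosmetic reductions.
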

\begin{proof}
\

a) This follows from the fact that the same is true for $G_n$ and $\Dn$ (see Proposition \ref{pro-fta-1} and Proposition \ref{pro-fta-tam-1}).\bk

b) Let $X\in\catwg{n}$, then by Proposition \ref{pro-fta-tam-2}
\begin{equation*}
    \Qn\Discn X=\Qn \Dn G_n X = \Qn G_n X\;.
\end{equation*}
Hence by Theorem \ref{the-funct-Qn} there are $n$-equivalences in $\tawg{n}$
\begin{equation*}
    \Discn X \lw \Qn \Discn X = \Qn G_n X\rw G_n X\;.
\end{equation*}
On the other hand by Proposition \ref{pro-fta-1} there is an $n$-equivalence in $\tawg{n}$, $G_n X\rw X$. So by composition we obtain $n$-equivalences
\begin{equation*}
    \Discn X \lw \Qn \Discn X \rw X
\end{equation*}
as required.
\bk

c) This follows from the fact that the same is true for $G_n$ and $\Dn$.
\end{proof}
\begin{corollary}\label{cor-the-disc-func}
    The functors
    \begin{equation*}
        \Qn:\ta{n}\leftrightarrows \catwg{n}:\Discn
    \end{equation*}
    induce an equivalence of categories after localization with respect to the $n$-equivalences
    \begin{equation*}
        \ta{n}\bsim^n\;\simeq \; \catwg{n}\bsim^n
    \end{equation*}
\end{corollary}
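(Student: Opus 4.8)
The plan is to show that $\Qn$ and $\Discn$ induce mutually inverse equivalences on the localizations by producing, in each of $\catwg{n}$ and $\ta{n}$, a natural chain of $n$-equivalences comparing the relevant composite to the identity. First I would record that both functors descend to the localizations. That $\Discn$ carries $n$-equivalences to $n$-equivalences is Theorem \ref{the-disc-func}(c). For $\Qn$ the same follows from Theorem \ref{the-funct-Qn}: given an $n$-equivalence $f\colon X\rw Y$ in $\tawg{n}$, the naturality square for $s_n$ has the three maps $s_n(X)$, $s_n(Y)$, $f$ all $n$-equivalences, so the two-out-of-three property of $n$-equivalences forces $\Qn f$ to be one as well. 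Hence there are induced functors $\overline{\Qn}$ and $\overline{\Discn}$ between $\ta{n}\bsim^n$ and $\catwg{n}\bsim^n$.

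The half over $\catwg{n}$ is clean. For $X\in\catwg{n}$ the proof of Theorem \ref{the-disc-func}(b) exhibits a natural $n$-equivalence $\epsilon_X\colon\Qn\Discn X\rw X$, namely the composite $\Qn\Discn X\cong\Qn G_n X\rw G_n X\rw X$; since its source $\Qn\Discn X$ and its target $X$ both lie in $\catwg{n}$, and $\catwg{n}$ is a full subcategory of $\tawg{n}$ (Remark \ref{rem-wg-ps-cat}), it is a morphism of $\catwg{n}$, natural in $X$. Thus $\overline{\Qn}\,\overline{\Discn}\cong\Id$ on $\catwg{n}\bsim^n$. The same reasoning shows more: $s_n$ restricts to a natural $n$-equivalence $\Qn X\rw X$ of $\catwg{n}$ for $X\in\catwg{n}$, while on all of $\tawg{n}$ it gives a natural $n$-equivalence $\iota_c\Qn\Rw\Id$, where $\iota_c\colon\catwg{n}\hookrightarrow\tawg{n}$ is the inclusion. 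Because both comparisons are single natural transformations, $\Qn$ and $\iota_c$ already realize an equivalence $\catwg{n}\bsim^n\simeq\tawg{n}\bsim^n$.

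The delicate point is the reverse composite $\overline{\Discn}\,\overline{\Qn}\cong\Id$ on $\ta{n}\bsim^n$. For $Y\in\ta{n}$, combining Theorem \ref{the-disc-func}(b) at the object $\Qn Y\in\catwg{n}$ with $s_n(Y)$ yields a natural chain of $n$-equivalences
\[
\Discn\Qn Y\;\xlw{\sim}\;\Qn\Discn\Qn Y\;\xrw{\sim}\;\Qn Y\;\xrw{\sim}\;Y .
\]
The obstacle is that the two interior objects lie in $\catwg{n}$, not in $\ta{n}$, so this chain a priori only identifies $\Discn\Qn Y$ with $Y$ in the ambient localization $\tawg{n}\bsim^n$, not in $\ta{n}\bsim^n$. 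I would therefore pivot through $\tawg{n}$: the chain above is precisely a natural equivalence $\iota_t R\Rw\Id_{\tawg{n}}$ for $R:=\Discn\Qn\colon\tawg{n}\rw\ta{n}$ and the full inclusion $\iota_t\colon\ta{n}\hookrightarrow\tawg{n}$, so it remains to see that $R$ and $\iota_t$ are inverse on localizations, i.e.\ that $R\,\iota_t\cong\Id_{\ta{n}}$ in $\ta{n}\bsim^n$. Granting this, composing the equivalences $\ta{n}\bsim^n\simeq\tawg{n}\bsim^n\simeq\catwg{n}\bsim^n$ and using that $\Qn X\rw X$ is an $n$-equivalence identifies the resulting comparison functors with $\overline{\Discn}$ and $\overline{\Qn}$, completing the proof.

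I expect $R\,\iota_t\cong\Id_{\ta{n}}$ to be the main obstacle, since it is exactly the assertion that the full subcategory $\ta{n}$ of $\tawg{n}$, equipped with the induced $n$-equivalences and the functorial retraction $R$, is homotopy-reflective. Were the comparison $\iota_t R\Rw\Id$ a single natural $n$-equivalence pointing consistently into $\ta{n}$, this would be immediate, as each component would then have both endpoints in the full subcategory and hence be an $n$-equivalence of $\ta{n}$; the difficulty is precisely that the comparison above is a genuine zig-zag with vertices in $\catwg{n}$. To remove these vertices I would use the finer compatibilities of the discretization: the idempotency $\Discn(\epsilon_{\Qn Y})\colon\Discn\Qn\Discn\Qn Y\rw\Discn\Qn Y$ together with Proposition \ref{pro-fta-tam-2} and Lemma \ref{lem-fta-tam-2}, which control $\Qn\Discn$ and the values $(\Discn X)_{\uk}$ on multi-indices with $k_j\neq 0$, in order to upgrade the $\tawg{n}$-level chain to a chain of $n$-equivalences with all vertices in $\ta{n}$. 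The remaining verifications --- preservation of $n$-equivalences and the $\catwg{n}$-side --- are routine by comparison.
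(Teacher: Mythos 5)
You handle the $\catwg{n}$ half exactly as the paper does, and you correctly isolate the crux of the other half: the zig-zag $\Discn\Qn Y\lw \Qn\Discn\Qn Y\rw \Qn Y\rw Y$ has its interior vertices in $\catwg{n}$, so it only identifies $\Discn\Qn Y$ with $Y$ in $\tawg{n}\bsim^n$, and the real content of the corollary is to replace it by a zig-zag inside $\ta{n}$. But at exactly this point your proposal stops being a proof: ``I would use the idempotency $\Discn(\epsilon_{\Qn Y})$ together with Proposition \ref{pro-fta-tam-2} and Lemma \ref{lem-fta-tam-2} \ldots to upgrade'' is a plan, and the named tools do not suffice. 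The map $\Discn(\epsilon_{\Qn Y})\colon \Discn\Qn\Discn\Qn Y\rw \Discn\Qn Y$ does live in $\ta{n}$, but it only relates $R^2Y$ to $RY$ for $R=\Discn\Qn$ and produces no morphism towards $Y$; idempotency of $\bar R$ up to isomorphism cannot by itself give $\bar R\,\bar\iota_t\cong\Id$ unless you already know the localized inclusion $\bar\iota_t$ is fully faithful --- which is precisely the homotopy-reflectivity you flagged as the open point. Likewise Proposition \ref{pro-fta-tam-2} and Lemma \ref{lem-fta-tam-2} are used in the paper on the \emph{other} side of the argument (they give $\Qn\Dn\cong\Qn$ on $\ftawg{n}$, whence $\Qn\Discn X\cong \Qn G_n X$), not to repair this zig-zag.

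The missing idea, which is the actual mechanism of the paper's proof, is the passage through $\ftawg{n}$, the only category on which the discretization $\Dn$ is genuinely functorial. One precomposes with the $n$-equivalence $Z:=G_n\Qn\Discn\Qn Y\rw \Qn\Discn\Qn Y$ supplied by Proposition \ref{pro-fta-1}, obtaining $n$-equivalences $\Discn\Qn Y\lw Z\rw Y$ whose three vertices all lie in $\ftawg{n}$ (note $\ta{n}\subset\ftawg{n}$, and a morphism of $\tawg{n}$ into an object of $\ta{n}$ is automatically a morphism of $\ftawg{n}$, since the sections to the discretization maps on the target are identities). Now apply $\Dn$: by Proposition \ref{pro-fta-tam-1} it preserves $n$-equivalences and it restricts to the identity on $\ta{n}$, so the zig-zag becomes $\Discn\Qn Y=\Dn\Discn\Qn Y\lw \Dn Z\rw \Dn Y=Y$, a chain of $n$-equivalences entirely inside $\ta{n}$, giving $\Discn\Qn Y\cong Y$ in $\ta{n}\bsim^n$. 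Without this $G_n$/$\Dn$ maneuver your argument establishes only $\bar\iota_t\bar R\cong\Id$ on $\tawg{n}\bsim^n$ (plus the correct $\catwg{n}$ half, and the routine descent of $\Qn$ and $\Discn$ to the localizations, which is fine), so the stated equivalence $\ta{n}\bsim^n\simeq\catwg{n}\bsim^n$ remains unproved as written.
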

\begin{proof}
Let $X\in\catwg{n}$. By by Theorem \ref{the-funct-Qn} and Proposition \ref{pro-fta-1} there are $n$-equivalences
\begin{equation*}
    \Qn\Discn X=\Qn \Dn G_n X=\Qn G_n X\rw G_n X\rw X\;.
\end{equation*}
So there is an $n$-equivalence in $\catwg{n}$
\begin{equation*}
    \Qn \Discn X\rw X\;.
\end{equation*}
It follows that $\Qn \Discn X\cong X$ in $\catwg{n}\bsim^n$. Let $Y\in\ta{n}$. By Theorem \ref{the-funct-Qn} there are $n$-equivalences in $\tawg{n}$
\begin{equation*}
    \Discn \Qn Y\lw \Qn \Discn \Qn Y \rw \Qn Y \rw Y\;.
\end{equation*}
Composing this with the $n$-equivalences
\begin{equation*}
    Z=G_n \Qn \Discn \Qn Y\rw \Qn \Discn \Qn Y
\end{equation*}
we obtain $n$-equivalences in $\tawg{n}$
\begin{equation*}
\Discn \Qn Y\lw Z \rw Y\;.
\end{equation*}
Since $Z\in\ftawg{n}$ and $\Discn \Qn Y\in\ta{n}$, $Y\in\ta{n}$ and $\ftawg{n}\subset \ta{n}$, this is a zig-zag of $n$-equivalences in $\ftawg{n}$ . Therefore we can apply $\Dn$ to the above zig-zag and obtain a zig-zag of $n$-equivalences in $\ta{n}$
\begin{equation*}
    \Discn \Qn Y = \Dn \Discn \Qn Y \lw \Dn Z \rw \Dn Y = Y\;.
\end{equation*}
It follows that $ \Discn \Qn Y \cong Y$ in $\ta{n}\bsim^n$.

\end{proof}

%%%%%%%%%%%%%%%%%%%%%%%%%%%%%%%%%%%%%%%%%%%%%%%%%%%%%%%%%%%%%%%%%%%%%%%%%%%%%%%%%%%%%%%%%%%%

%%%%%%%%%%%%%%%%%%%%%%%%%%%%%%%%%%%%%%%%%%%%%%%%%%%%%%%%%%%%%%%%%%%%%%%
\section{Groupoidal weakly globular n-fold categories}\label{sec-group-wg-nfol-cat}

In this section we introduce the category $\gtawg{n}$ of groupoidal weakly globular $n$-fold categories and we show that it is an algebraic model of $n$-types. This means that weakly globular $n$-fold categories satisfy the homotopy hypothesis.
\begin{definition}\label{def-gta-1}
    The full subcategory $\gtawg{n}\subset\tawg{n}$ of groupoidal weakly globular \nfol categories is defined inductively as follows.

    For $n=1$, $\gtawg{1}=\Gpd$. Note that $\cathd{}\subset \gtawg{1}$. Suppose inductively we defined $\gtawg{n-1}\subset\tawg{n-1}$ such that
    \begin{itemize}
      \item [i)] $X_k\in \gtawg{n-1}$ for all $k\geq 0$.\mk

      \item [ii)] $\p{n}X\in \gtawg{n-1}$.
    \end{itemize}
\end{definition}
\begin{lemma}\label{lem-gta-1}
    Let $f:X\rw Y$ be an equivalence in $\tawg{n}$
    \begin{itemize}
      \item [i)] If $Y\in\gtawg{n}$ then $X\in\gtawg{n}$.\mk

      \item [ii)] If $X\in\gtawg{n}$ then $Y\in\gtawg{n}$.
    \end{itemize}
\end{lemma}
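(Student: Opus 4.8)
The plan is to argue by induction on $n$, exploiting the inductive characterization of $\gtawg{n}$ in Definition \ref{def-gta-1}: an object $X\in\tawg{n}$ lies in $\gtawg{n}$ precisely when $X_k\in\gtawg{n-1}$ for all $k\geq 0$ and $\p{n}X\in\gtawg{n-1}$. The base case $n=1$ is the elementary fact that an equivalence of categories both preserves and reflects the property of being a groupoid (a morphism $g$ is invertible iff its image $Fg$ is, using fullness and faithfulness of $F$). For the inductive step I first record three routine facts about $\gtawg{n-1}$, each obtained by a secondary induction on dimension from the groupoid case: (A) $\cathd{n-1}\subset\gtawg{n-1}$, i.e. homotopically discrete objects are groupoidal; (B) $\gtawg{n-1}$ (inside $\tawg{n-1}$) is closed under coproducts, under summands, and under pullbacks over discrete objects; and (C) since the map $X_1\to X_0^d\times X_0^d$ has discrete target, there is a decomposition $X_1\cong\coprod_{(a,b)}X(a,b)$, and likewise for $Y$.

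With these in hand, suppose $f:X\to Y$ is an $n$-equivalence and assume $Y\in\gtawg{n}$ (part i). The truncation $\p{n}f:\p{n}X\to\p{n}Y$ is an $(n-1)$-equivalence with groupoidal target, so the inductive hypothesis gives $\p{n}X\in\gtawg{n-1}$. For the levels: $X_0\in\cathd{n-1}\subset\gtawg{n-1}$ by (A); each $Y(a,b)$ is a summand of $Y_1\in\gtawg{n-1}$, hence groupoidal by (B), so applying the inductive hypothesis to the $(n-1)$-equivalence $f(a,b):X(a,b)\to Y(fa,fb)$ yields $X(a,b)\in\gtawg{n-1}$, and then $X_1=\coprod_{(a,b)}X(a,b)\in\gtawg{n-1}$ by (B) and (C). Finally, for $k\geq 2$ the induced Segal map $X_k\to\pro{X_1}{X_0^d}{k}$ is an $(n-1)$-equivalence whose target is groupoidal by (B), so $X_k\in\gtawg{n-1}$. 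Thus $X\in\gtawg{n}$.

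Part (ii) is symmetric, running the inductive hypothesis in the opposite direction. The one extra ingredient is that an $n$-equivalence induces a surjection $X_0^d\to Y_0^d$ (essential surjectivity on objects, coming from surjectivity of the $p\up{1,n}$-map, as in Proposition 4.11 of \cite{Pa3}); this guarantees that every component $Y(a',b')$ of $Y_1$ equals some $Y(fa,fb)$, so that groupoidality of each $X(a,b)$ transports via $f(a,b)$ to all components of $Y_1$, whence $Y_1\in\gtawg{n-1}$. The remaining levels $Y_k$, $k\geq 2$, and $Y_0\in\cathd{n-1}$ are handled exactly as in part (i) using the induced Segal maps and (B), and $\p{n}Y\in\gtawg{n-1}$ follows from $\p{n}f$ and the inductive hypothesis.

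The main obstacle is conceptual rather than computational: an $n$-equivalence in $\tawg{n}$ is not defined levelwise, so there is no direct map $f_k:X_k\to Y_k$ that is known to be an $(n-1)$-equivalence. The crux is therefore to recover control of every level $X_k$ indirectly, through the hom-objects $X(a,b)$ (which the definition of $n$-equivalence does control) together with the induced Segal equivalences $X_k\to\pro{X_1}{X_0^d}{k}$; this is what forces the detour through the closure properties (B) and the coproduct decomposition (C). Checking (C) cleanly and the surjectivity used in part (ii) are the places where the most care is needed, while the groupoid base case and the stability facts (A), (B) are standard.
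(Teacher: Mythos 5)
Your part i) is, in substance, the paper's own proof: the same induction on $n$ with the groupoid base case, control of $X(a,b)$ via the $(n-1)$-equivalences $f(a,b)$, the decomposition $X_1\cong\coprod_{a,b\in X_0^d}X(a,b)$, closure of $\gtawg{n-1}$ under coproducts and fiber products over discrete objects (so that $\pro{X_1}{X_0^d}{k}$ is groupoidal), and then the induced Segal maps plus the inductive hypothesis to handle each $X_k$, with $X_0\in\cathd{n-1}\subset\gtawg{n-1}$ and $\p{n}f$ taking care of the remaining conditions. The paper leaves your facts (A)--(C) implicit; making them explicit is correct and harmless.

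The genuine gap is in part ii), exactly at the step you flagged as the extra ingredient. An $n$-equivalence does \emph{not} induce a surjection $X_0^d\rw Y_0^d$. What Proposition 4.11 of \cite{Pa3} (and the definition of $n$-equivalence) gives is surjectivity of $p\up{1,n}f$, i.e.\ surjectivity after the full truncation to a set --- surjectivity ``up to isomorphism'' --- not surjectivity of the map of discretizations. Already for $n=2$: let $Y\in\ta{2}$ be the free-living isomorphism with discrete hom-categories, so $Y_0^d=\{y_1,y_2\}$ with $y_1\cong y_2$ in $\p{2}Y$, and let $X$ be the full sub-object on $y_1$. The inclusion $f$ is a 2-equivalence, yet $X_0^d=\{y_1\}\rw Y_0^d$ is not surjective, and the component $Y(y_2,y_2)$ of $Y_1$ is not of the form $Y(fa,fb)$. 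So your assertion that every component of $Y_1$ equals some $Y(fa,fb)$ fails, and with it your proof that $Y_1\in\gtawg{n-1}$. To repair it one needs an invariance statement: $Y(a',b')$ is $(n-1)$-equivalent to $Y(fa,fb)$ whenever $a',b'$ agree with $fa,fb$ in $p\up{1,n}Y$; this is plausible but must be proved (using degeneracies and the induced Segal map structure) and is not available off the shelf in the paper. To be fair, the paper itself dismisses ii) with the single sentence that it is ``completely similar'' to i), thereby passing over the same point silently; but since your write-up makes the surjectivity claim explicit and rests part ii) on it, in your argument it is a concrete false step rather than an omission.
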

\begin{proof}
By induction on $n$. The case $n=1$ holds since a category equivalent to a groupoid is itself a groupoid. Suppose, inductively, that the lemma holds for $n-1$ and let $f:X\rw Y$ be an $n$-equivalence.

i) For each $a,b\in X_0^d$ the map
\begin{equation*}
    f(a,b):X(a,b)\rw Y(fa,fb)
\end{equation*}
is a $(n-1)$-equivalence in $\tawg{n-1}$ with $Y(fa,fb)\in\gtawg{n-1}$. So by induction hypothesis $X(a,b)\in \gtawg{n-1}$. Since
\begin{equation*}
    X_1=\uset{a,b\in X_0^d}{\coprod}X(a,b)
\end{equation*}
it follows that $X_1\in\gtawg{n-1}$. We also have
\begin{equation*}
    \tens{X_1}{X_0^d}=\uset{a,b,c\in X_0^d}{\coprod}X(a,b)\times X(b,c)
\end{equation*}
so that $\tens{X_1}{X_0^d}\in\gtawg{n-1}$. Similarly one can show that $\pro{X_1}{X_0^d}{k}\in\gtawg{n-1}$ for all $k\geq 2$. In conclusion, $X_k\in\gtawg{n-1}$ for all $k\geq 0$.

By definition there is a $(n-1)$-equivalence
\begin{equation*}
    \p{n}f:\p{n}X\rw\p{n}Y
\end{equation*}
with $\p{n}Y\in\gtawg{n-1}$ since by hypothesis $Y\in\gtawg{n}$. Hence by inductive hypothesis $\p{n}X\in\gtawg{n-1}$. We conclude that $X\in\gtawg{n-1}$.
\bk

ii) The proof is completely similar to one of i).

\end{proof}
\begin{remark}\label{rem-gta-1}
    It follows immediately from the definition of $\gtawg{n}$ that the embedding
    \begin{equation*}
        J_n:\tawg{n}\hookrightarrow \funcat{n-1}{\Cat}
    \end{equation*}
    restricts to the embedding
    \begin{equation*}
        J_n:\gtawg{n}\hookrightarrow\funcat{n-1}{\Gpd}\;.
    \end{equation*}
    Since $p=q:\Gpd\rw\Set$ it follows that for each $X\in\gtawg{n}$ there is a morphism, natural in $X$,
    \begin{equation*}
        X\rw\di{n}\p{n}X\;.
    \end{equation*}
\end{remark}
\begin{definition}\label{def-gta-2}
    The category $\gcatwg{n}\subset\catwg{n}$ of groupoidal weakly globular \nfol categories is the full subcategory of $\catwg{n}$ whose objects $X$ are in $\gtawg{n}$.

    The category $\gta{n}\subset \ta{n}$ of groupoidal Tamsamani $n$-categories is the full subcategory of $\ta{n}$ whose objects $X$ are in $\gtawg{n}$.
\end{definition}
\begin{remark}\label{rem-gta-2}
    The following facts are immediate from the definitions:
    \begin{itemize}
      \item [a)] $X\in\gcatwg{n}$ (resp. $X\in\gta{n}$) if and only if for each $k\geq 0$ $X_k\in\gcatwg{n-1}$ (resp. $X_k\in\gta{n-1}$) and $\p{n}X\in\gcatwg{n}$ (resp. $\p{n}X\in\gta{n-1}$).\mk

      \item [b)] Let $f:X\rw Y$ be an $n$-equivalence in $\tawg{n}$ and suppose that $Y\in\gtawg{n}$. Then if $X\in\catwg{n}$ it is $X\in\gcatwg{n}$  and  if $X\in\tawg{n}$ then $X\in\gtawg{n}$.
    Similarly if $f$ is an $n$-equivalence in $\tawg{n}$ and $X\in\gtawg{n}$.

    \end{itemize}
\end{remark}
\begin{corollary}\label{cor-gta-1}
The following facts hold:
    \begin{itemize}
      \item [a)] The functor
      \begin{equation*}
        \Qn:\tawg{n}\rw\catwg{n}
      \end{equation*}
      restricts to a functor
      \begin{equation*}
        \Qn:\gtawg{n}\rw\gcatwg{n}
      \end{equation*}
      such that for each $X\in\gtawg{n}$ there is a $n$-equivalence in $\gtawg{n}$ $s_n(X):\Qn X\rw X$.\mk

      \item [b)] The functor
      \begin{equation*}
        \Discn:\catwg{n}\rw\ta{n}
      \end{equation*}
      restricts to a functor
      \begin{equation*}
        \Discn:\gcatwg{n}\rw\gta{n}
      \end{equation*}
      such that for each $X\in\gcatwg{n}$ there is a zig-zag of $n$-equivalences in $\gtawg{n}$ between $X$ and $\Discn X$.
    \end{itemize}
\end{corollary}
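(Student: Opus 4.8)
The plan is to exploit the fact that $\Qn$ and $\Discn$ are already available as functors on the ambient categories $\tawg{n}\rw\catwg{n}$ and $\catwg{n}\rw\ta{n}$; since $\gtawg{n}\subset\tawg{n}$, $\gcatwg{n}\subset\catwg{n}$ and $\gta{n}\subset\ta{n}$ are all \emph{full} subcategories, it suffices to check that each functor sends objects of the relevant groupoidal subcategory into the groupoidal subcategory, and that the comparison $n$-equivalences already constructed live in $\gtawg{n}$. The single tool driving both verifications is Lemma \ref{lem-gta-1}, which asserts that $\gtawg{n}$ is closed under $n$-equivalences in both directions.

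For a), I would start from $X\in\gtawg{n}$. The functor $\Qn:\tawg{n}\rw\catwg{n}$ gives $\Qn X\in\catwg{n}$, and Theorem \ref{the-funct-Qn} supplies an $n$-equivalence $s_n(X):\Qn X\rw X$ in $\tawg{n}$. Since the target $X$ lies in $\gtawg{n}$, Lemma \ref{lem-gta-1}(i) forces the source $\Qn X$ into $\gtawg{n}$; being simultaneously an object of $\catwg{n}$, it lies in $\gcatwg{n}$. Fullness of the subcategories then makes the assignment $X\mapsto\Qn X$ a functor $\gtawg{n}\rw\gcatwg{n}$, and $s_n(X)$ is an $n$-equivalence between objects of $\gtawg{n}$, as required.

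For b), take $X\in\gcatwg{n}\subset\gtawg{n}$. Theorem \ref{the-disc-func}(b), as unwound in its proof, provides a zig-zag of $n$-equivalences in $\tawg{n}$
\begin{equation*}
    \Discn X \lw \Qn\Discn X \rw X\;.
\end{equation*}
Applying Lemma \ref{lem-gta-1}(i) to the map $\Qn\Discn X\rw X$ (whose target lies in $\gtawg{n}$) places $\Qn\Discn X$ in $\gtawg{n}$; then applying Lemma \ref{lem-gta-1}(ii) to $\Qn\Discn X\rw\Discn X$ (whose source now lies in $\gtawg{n}$) places $\Discn X$ in $\gtawg{n}$. Since $\Discn X\in\ta{n}$ by construction, it lies in $\gta{n}$, so $\Discn$ restricts to a functor $\gcatwg{n}\rw\gta{n}$ by fullness, and every object in the displayed zig-zag is then an object of $\gtawg{n}$.

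Finally, there is essentially no analytic obstacle left: the substantive content has already been packaged into Theorems \ref{the-funct-Qn} and \ref{the-disc-func} and into the closure Lemma \ref{lem-gta-1}. The only point demanding care is to invoke \emph{both} directions of Lemma \ref{lem-gta-1} in part b), and in the correct order, first pulling groupoidality back along $\Qn\Discn X\rw X$ and only then pushing it forward along $\Qn\Discn X\rw\Discn X$, so that the intermediate object $\Qn\Discn X$ is certified to be in $\gtawg{n}$ before it is used to certify $\Discn X$.
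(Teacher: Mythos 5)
Your proposal is correct and is essentially the argument the paper intends: the corollary is stated without an explicit proof precisely because it follows from Remark \ref{rem-gta-2}(b) (the packaging of Lemma \ref{lem-gta-1} you invoke) applied to the $n$-equivalence $s_n(X)$ of Theorem \ref{the-funct-Qn} and to the zig-zag $\Discn X \lw \Qn\Discn X \rw X$ from the proof of Theorem \ref{the-disc-func}. Your care in applying Lemma \ref{lem-gta-1}(i) first to certify $\Qn\Discn X\in\gtawg{n}$ and only then Lemma \ref{lem-gta-1}(ii) to certify $\Discn X$ is exactly the right order, and the fullness of $\gcatwg{n}$ and $\gta{n}$ (Definition \ref{def-gta-2}) justifies the restriction to functors as you say.
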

\newpage
\begin{proposition}\label{pro-gta-1}
    The functors
    \begin{equation*}
        \Qn:\gta{n}\leftrightarrows \gcatwg{n}:\Discn
    \end{equation*}
    induce an equivalence of categories after localization with respect to the $n$-equivalences
    \begin{equation*}
        \gta{n}\bsim^n \;\simeq\; \gcatwg{n}\bsim^n\;.
    \end{equation*}
\end{proposition}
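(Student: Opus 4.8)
The plan is to mirror the proof of Corollary \ref{cor-the-disc-func}, replacing the general comparison functors by their groupoidal restrictions from Corollary \ref{cor-gta-1} and checking at each stage that the objects and $n$-equivalences occurring in the relevant zig-zags never leave the groupoidal subcategories. Concretely, the two things to establish are that $\Qn\Discn X\cong X$ in $\gcatwg{n}\bsim^n$ for every $X\in\gcatwg{n}$, and that $\Discn\Qn Y\cong Y$ in $\gta{n}\bsim^n$ for every $Y\in\gta{n}$; together these give the asserted equivalence of localizations.

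First I would treat the round trip starting in $\gcatwg{n}$. Given $X\in\gcatwg{n}$, Corollary \ref{cor-gta-1}(b) gives $\Discn X\in\gta{n}$ and Corollary \ref{cor-gta-1}(a) gives $\Qn\Discn X\in\gcatwg{n}$. As in Corollary \ref{cor-the-disc-func}, Proposition \ref{pro-fta-tam-2} yields $\Qn\Discn X=\Qn\Dn G_n X\cong\Qn G_n X$, while Proposition \ref{pro-fta-1} and Corollary \ref{cor-gta-1}(a) produce $n$-equivalences $\Qn G_n X\rw G_n X\rw X$. Since $X\in\gtawg{n}$ and $G_n X\rw X$ is an $n$-equivalence in $\tawg{n}$, Lemma \ref{lem-gta-1}(i) forces $G_n X\in\gtawg{n}$, so the intermediate object is groupoidal; the resulting composite $\Qn\Discn X\rw X$ is therefore an $n$-equivalence between objects of $\gcatwg{n}$, whence $\Qn\Discn X\cong X$ in $\gcatwg{n}\bsim^n$.

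Next I would treat the round trip starting in $\gta{n}$. For $Y\in\gta{n}$, Corollary \ref{cor-gta-1} gives $\Qn Y\in\gcatwg{n}$ and $\Discn\Qn Y\in\gta{n}$, and the argument of Corollary \ref{cor-the-disc-func} supplies a zig-zag $\Discn\Qn Y\lw Z\rw Y$ of $n$-equivalences in $\ftawg{n}$, where $Z=G_n\Qn\Discn\Qn Y$. Because $Z\rw\Qn\Discn\Qn Y$ is an $n$-equivalence and $\Qn\Discn\Qn Y\in\gcatwg{n}\subset\gtawg{n}$, Lemma \ref{lem-gta-1}(i) shows $Z\in\gtawg{n}$, so the zig-zag lies in $\ftawg{n}\cap\gtawg{n}$. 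Applying $\Dn$ and invoking Proposition \ref{pro-fta-tam-1} produces a zig-zag $\Discn\Qn Y=\Dn\Discn\Qn Y\lw\Dn Z\rw\Dn Y=Y$ of $n$-equivalences in $\ta{n}$; since the endpoints lie in $\gta{n}$ and $\Dn Z\rw Y$ is an $n$-equivalence, Lemma \ref{lem-gta-1}(i) gives $\Dn Z\in\gtawg{n}$, hence $\Dn Z\in\gta{n}$. Thus the whole zig-zag lives in $\gta{n}$, and $\Discn\Qn Y\cong Y$ in $\gta{n}\bsim^n$.

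The only point requiring genuine care — the main obstacle — is the bookkeeping that every auxiliary object produced by the non-groupoidal machinery, namely $G_n X$, $Z$ and $\Dn Z$, is in fact groupoidal. This is precisely what Lemma \ref{lem-gta-1} (closure of $\gtawg{n}$ under $n$-equivalences in $\tawg{n}$, together with Remark \ref{rem-gta-2}) is designed to guarantee; once it is invoked at the three places above, the remainder of the proof is a verbatim repetition of Corollary \ref{cor-the-disc-func}, and combining the two displayed isomorphisms shows that $\Qn$ and $\Discn$ descend to mutually inverse equivalences on the localizations.
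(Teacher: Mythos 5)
Your proposal is correct and follows essentially the same route as the paper: reuse the $n$-equivalence $\Qn\Discn X\rw X$ and the zig-zag $\Discn\Qn Y\lw \Dn Z\rw Y$ from the proof of Corollary \ref{cor-the-disc-func}, and then check that all objects involved remain groupoidal. The only cosmetic difference is that you invoke Lemma \ref{lem-gta-1} directly at each intermediate object ($G_n X$, $Z$, $\Dn Z$), whereas the paper cites Remark \ref{rem-gta-2} (itself a consequence of that lemma) and only verifies groupoidality of the endpoints of the zig-zags, which suffices for the localization argument.
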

\begin{proof}
Let $X\in\gcatwg{n}$. As in the proof of Corollary \ref{cor-the-disc-func} there is an $n$-equivalence in $\catwg{n}$
\begin{equation*}
    \Qn\Discn X \rw X\;.
\end{equation*}
Since $X\in\gcatwg{n}$, by Remark \ref{rem-gta-2}, $\Qn \Discn X\in\gcatwg{n}$, so this is an $n$-equivalence in $\gcatwg{n}$. It follows that
\begin{equation*}
    \Qn\Discn X\cong X
\end{equation*}
in $\gcatwg{n}\bsim^n$.

Let $Y\in\gta{n}$. By the proof of Corollary \ref{cor-the-disc-func} there is a zig-zag of $n$-equivalences in $\ta{n}$
\begin{equation*}
    \Discn \Qn Y\lw  \Dn Z \rw Y\;.
\end{equation*}
Since $Y\in\gta{n}$, from Remark \ref{rem-gta-2} $\Discn\Qn Y\in\gta{n}$ and $\Dn Z\in\gta{n}$ so that this is a zig-zag of $n$-equivalences in $\gta{n}$. It follows that
\begin{equation*}
    \Discn \Qn Y\cong Y
\end{equation*}
in $\gta{n}\bsim^n$.
\end{proof}
\begin{corollary}\label{cor-gta-2}
    There is an equivalence of categories
    \begin{equation*}
        \gcatwg{n}\bsim^n\;\simeq\; \Ho\mbox{\rm\text{(n-types)}}\;.
    \end{equation*}
\end{corollary}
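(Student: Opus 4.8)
The plan is to deduce the corollary directly from Proposition \ref{pro-gta-1}, together with the classical statement that the Tamsamani model realizes the homotopy hypothesis in the groupoidal case. By Proposition \ref{pro-gta-1}, the restricted rigidification and discretization functors $\Qn$ and $\Discn$ induce an equivalence of localized categories
\begin{equation*}
    \gta{n}\bsim^n \;\simeq\; \gcatwg{n}\bsim^n\;.
\end{equation*}
Hence it suffices to identify the left-hand side with the homotopy category of $n$-types; the transport of this identification to $\gcatwg{n}$ is then purely formal.

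The remaining ingredient is Tamsamani's theorem, namely that groupoidal Tamsamani $n$-categories are an algebraic model of $n$-types:
\begin{equation*}
    \gta{n}\bsim^n \;\simeq\; \Ho\mbox{\rm\text{(n-types)}}\;,
\end{equation*}
as established in \cite{Ta} (see also \cite{Simp}). Concretely, one has a Poincar\'e/fundamental groupoid functor $\Top\rw\gta{n}$ and a geometric realization in the other direction, and these descend to inverse equivalences after inverting $n$-equivalences on one side and weak homotopy equivalences on the other. Before invoking this, I would check that the notion of $n$-equivalence in $\gta{n}$ used here agrees with Tamsamani's; this is guaranteed by Example \ref{ex-tam}, where it is noted that for $X\in\Tan$ the induced Segal maps coincide with the ordinary Segal maps and that the definition recovers Tamsamani's original weak $n$-category, so the two notions of $n$-equivalence match.

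Composing the two equivalences then yields
\begin{equation*}
    \gcatwg{n}\bsim^n \;\simeq\; \gta{n}\bsim^n \;\simeq\; \Ho\mbox{\rm\text{(n-types)}}\;,
\end{equation*}
which is the assertion. The only genuinely external input is the classical equivalence $\gta{n}\bsim^n \simeq \Ho\text{(n-types)}$; all of the substantive new work—comparing the weakly globular $n$-fold model with the Tamsamani model in the groupoidal setting—has already been carried out in Proposition \ref{pro-gta-1}. Consequently the principal obstacle is not in the present corollary at all but lies upstream, in establishing Corollary \ref{cor-the-disc-func} and its groupoidal refinement; once those are in hand, the homotopy hypothesis for $\gcatwg{n}$ follows at once.
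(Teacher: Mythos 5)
Your proposal is correct and follows exactly the paper's own argument: compose the equivalence $\gcatwg{n}\bsim^n\simeq\gta{n}\bsim^n$ from Proposition \ref{pro-gta-1} with Tamsamani's classical equivalence $\gta{n}\bsim^n\simeq\Ho\text{(n-types)}$ from \cite{Ta}. Your additional check via Example \ref{ex-tam} that the notion of $n$-equivalence restricts to Tamsamani's original one is a reasonable piece of diligence the paper leaves implicit, but it does not change the route.
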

\begin{proof}
By \cite{Ta} there is an equivalence of categories
\begin{equation*}
    \gta{n}\bsim^n\;\simeq\;\Ho\text{(n-types)}\;.
\end{equation*}
Hence by Proposition \ref{pro-gta-1} the result follows.
\end{proof}

%%%%%%%%%%%%%%%%%%%%%%%%%%%%%%%%%%%%%%%%%%%%%%%%%%%%%%%%%%%%%%%%%%%%%%%%%%%%%%%%%%%%%

\end{document}